\theoremstyle{plain}
\newtheorem{theo}{Theorem}[section]
\newtheorem{lemm}[theo]{Lemma}
\newtheorem{prop}[theo]{Proposition}
\newtheorem{coro}[theo]{Corollary}
\theoremstyle{definition}
\newtheorem{defi}[theo]{Definition}
\theoremstyle{remark}
\newtheorem{rema}[theo]{Remark}
\title[Analytic continuation of Liouville Theory: pencil region]{Path integral approach to analytic continuation of Liouville Theory: the pencil region}
\author{Yichao Huang}
\address{Exactum C327, P.O. Box 68 (Pietari Kalmin katu 5), FI-00014 University of Helsinki}
\email{Yichao.Huang@helsinki.fi}
\thanks{Research supported by Institut Mittag-Leffler postdoctoral fellowship and ERC-grant QFPROBA}
\date{\today}
\begin{document}

\begin{abstract}
We study the problem of analytic continuation of Liouville Conformal Field Theory using the probabilistic approach of David, Kupiainen, Rhodes and Vargas \cite{david2016liouville} based on the theory of Gaussian Multiplicative Chaos. The key idea is to apply stochastic calculus techniques to some Brownian motions associated to the Gaussian Free Field. We strengthen the results in \cite{kupiainen2017integrability} and are able to provide rigorous justification of analytic continuation of Liouville Theory in an infinite trianglar region, which we call the pencil region. This is in accordance with the physics literature \cite{harlow2011analytic}. Along the lines, we develop new tools and estimates of Gaussian measures and Liouville correlation functions using this probabilistic approach.

\end{abstract}
\keywords{Liouville Conformal Field Theory, Liouville Correlation Functions, Analytic Continuation, Gaussian Free Field, Gaussian Multiplicative Chaos.}
\maketitle
\setcounter{tocdepth}{1}
\tableofcontents

\section{Introduction}
The path integral formalism of Liouville Conformal Field Theory (LCFT hereafter) was proposed in the seminal paper of Polyakov \cite{polyakov1981quantum} and can been seen as a probabilistic theory of $2d$ Riemannian metrics. In a series of recent works \cite{david2016liouville,kupiainen2015local,kupiainen2017integrability} by David, Kupiainen, Rhodes and Vargas, a rigorous mathematical construction of Polyakov's path integral formalism is carried out, which defines the Liouville correlation functions in a probabilistic setting, and consequently provides rigorous mathematical proofs of fundamental formulas in Conformal Field Theory such as the BPZ equations \cite{belavin1984infinite} and the DOZZ formula \cite{dorn1992correlation,zamolodchikov1996conformal} on the 3-point structure constant. Their construction is based on Gaussian Free Field (GFF hereafter) and its exponential which is defined using the theory of Gaussian Multiplicative Chaos (GMC hereafter) of Kahane \cite{kahane1985chaos}. The goal of this article is to investigate extension of this path integral construction to Liouville correlation functions with complex parameters: this problem is known in the physics literature as analytic continuation of Liouville theory \cite{harlow2011analytic}. We refer the reader to the introductory reviews \cite{vargas2017lecture,kupiainen2018dozz,harlow2011analytic,ribault2014conformal} for background and references both in mathematics and physics on this subject.

\subsection{Main result}
The goal of this paper is to prove that the path integral formalism proposed in \cite{david2016liouville} directly defines a natual analytic function on a much larger region for general $n$-point Liouville correlation functions. In the language of LCFT, our main statement is the following (definitions and a precise statement will be given later):
\begin{theo}[Main theorem]\label{th:MainTheorem}
Let $\alpha_i$ be real parameters satisfying the Seiberg bounds
\begin{equation}
\forall i,\quad \alpha_i<Q\quad\text{and}\quad \sum\limits_{i}\alpha_i>2Q
\end{equation}
on the Riemann sphere. The $n$-point Liouville correlation function with $\alpha_j\in\mathbb{R}$, $z_j\in\mathbb{C}$ and $\mu>0$ defined by the path integral formalism in \cite{david2016liouville} on the Riemann sphere
\begin{equation}
\left<\prod_{j=1}^{n}V_{\alpha_j}(z_j)\right>\coloneqq\mathbb{E}\left[\int_{\mathbb{R}}e^{(\sum_j\alpha_j-2Q)c}\prod_j e^{\alpha_j X(z_j)}e^{-\mu e^{\gamma c}\int_{\mathbb{C}}e^{\gamma X}}dc\right]
\end{equation}
also defines a natural analytic continuation to complex valued insertions
\begin{equation*}
\alpha_j+i\beta_j\in\mathbb{C}
\end{equation*}
in the region (which we refer to as the pencil region)
\begin{equation}
\mathcal{R}\coloneqq\cap_j\{|\beta_j|<Q-\alpha_j\}.
\end{equation}
More precisely, the n-point correlation function with complex parameters $(\alpha_j+i\beta_j)_{1\leq j\leq n}\in(\mathbb{C})^n$:
\begin{equation}
\left<\prod_{j=1}^{n}V_{\alpha_j+i\beta_j}(z_j)\right>\coloneqq\mathbb{E}\left[\int_{\mathbb{R}}e^{(\sum_j(\alpha_j+i\beta_j)-2Q)c}\prod_j e^{(\alpha_j+i\beta_j)X(z_j)}e^{-\mu e^{\gamma c}\int_{\mathbb{C}}e^{\gamma X}}dc\right]
\end{equation}
can be defined using the standard regularization procedure similar to the one used in the \cite{david2016liouville,kupiainen2017integrability} and is analytic in all $\alpha_j+i\beta_j$ in the pencil region $\mathcal{R}$.
\end{theo}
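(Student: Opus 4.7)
My plan is to follow the probabilistic blueprint of \cite{david2016liouville,kupiainen2017integrability} and adapt each step to complex insertions. After the standard circle-average regularization $X_\epsilon$ of the GFF and Wick renormalization $V_{\alpha+i\beta,\epsilon}(z)\coloneqq\epsilon^{(\alpha+i\beta)^2/2}e^{(\alpha+i\beta)X_\epsilon(z)}$, the regularized correlator is a finite-dimensional Gaussian integral composed with the $c$-integral, hence jointly complex-analytic in $(\alpha_j+i\beta_j)_j$ on any region of absolute convergence. The goal is then to take $\epsilon\to 0$ and show that analyticity is preserved on the full pencil region $\mathcal{R}$. The first structural step is a complex Girsanov transform that absorbs the product $\prod_j e^{(\alpha_j+i\beta_j)X_\epsilon(z_j)}$ into a deterministic complex shift of $X$; this produces the expected Coulomb-gas prefactor $\prod_{j<k}|z_j-z_k|^{-(\alpha_j+i\beta_j)(\alpha_k+i\beta_k)}$ and replaces $\int_\mathbb{C}e^{\gamma X}$ by the complex twisted GMC
\begin{equation*}
Z_\epsilon\coloneqq\int_\mathbb{C}\prod_j|z-z_j|^{-\gamma(\alpha_j+i\beta_j)}\,e^{\gamma X_\epsilon(z)-\frac{\gamma^2}{2}\mathbb{E}[X_\epsilon(z)^2]}\,g(z)\,dz,
\end{equation*}
up to bounded multiplicative factors. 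The complex version of Girsanov is legitimate here by analytic continuation of the standard real-parameter identity.

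Next I would carry out the Liouville zero-mode integration. The Seiberg bound $\sum\alpha_j>2Q$ gives $\mathrm{Re}(s)>0$ for $s\coloneqq\gamma^{-1}(\sum(\alpha_j+i\beta_j)-2Q)$, so provided $\mathrm{Re}(Z_\epsilon)>0$ almost surely, the $c$-integral evaluates pathwise to $\gamma^{-1}\mu^{-s}\Gamma(s)\,Z_\epsilon^{-s}$, with $Z_\epsilon^{-s}$ defined via the principal branch of the logarithm. The central new analytic input is therefore a detailed study of the complex random variable $Z_\epsilon$: one must show, uniformly in $\epsilon$, (i) that $\mathrm{Re}(Z_\epsilon)>0$ almost surely so that $Z_\epsilon^{-s}$ is unambiguously defined, (ii) that $\mathbb{E}[|Z_\epsilon|^{-q}]$ is finite for $q$ in a neighborhood of $\mathrm{Re}(s)$, and (iii) that $Z_\epsilon$ converges to a non-trivial limit as $\epsilon\to 0$.

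The positive moments of $|Z_\epsilon|$ are uniformly bounded in $\epsilon$ by Kahane-type estimates applied to the real GMC weighted by $\prod_j|z-z_j|^{-\gamma\alpha_j}$, since the oscillating factors $|z-z_j|^{-i\gamma\beta_j}$ have modulus one. The delicate and novel part is quantifying the cancellations these oscillating factors can produce. Here I would use a radial (and lateral) decomposition of the GFF and stochastic-calculus estimates on the associated Brownian motions to control, uniformly in $\epsilon$, a quantitative lower bound on $\mathrm{Re}(Z_\epsilon)$ and the negative moments of $|Z_\epsilon|$. The strict pencil inequality $\alpha_j+|\beta_j|<Q$ is precisely what provides enough room in the underlying GMC estimates: informally, an auxiliary real Girsanov shift of size $|\beta_j|$ in the worst-case direction reduces the problem to subcritical real exponents $\alpha_j+|\beta_j|<Q$, to which classical Kahane theory applies.

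With these uniform estimates in place, the convergence $Z_\epsilon\to Z_0$ in a suitable $L^p$ and the corresponding limit of $\mathbb{E}[Z_\epsilon^{-s}]$ by dominated convergence are essentially standard, and joint analyticity in $(\alpha_j+i\beta_j)_j\in\mathcal{R}$ follows from Morera's theorem applied to the (explicitly analytic) regularized correlators together with locally uniform bounds. The main obstacle is therefore the analysis described in the previous paragraph: establishing lower bounds on $\mathrm{Re}(Z_\epsilon)$ and negative-moment estimates that are uniform in $\epsilon$ and that remain finite up to, and degenerate precisely at, the boundary of the pencil region $\mathcal{R}$.
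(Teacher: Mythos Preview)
Your plan contains a structural gap at the zero-mode step. After your complex Girsanov, the object $Z_\epsilon$ is a genuine complex integral: the integrand is the positive GMC density multiplied by the unimodular oscillating factor $\prod_j|z-z_j|^{-i\gamma\beta_j}=\exp\bigl(-i\gamma\sum_j\beta_j\ln|z-z_j|\bigr)$, whose argument winds without bound as $z\to z_j$. There is no mechanism forcing $\mathrm{Re}(Z_\epsilon)>0$ almost surely, and in fact one should expect the law of $\arg Z_\epsilon$ to spread over the full circle once the $\beta_j$ are not small; the pathwise evaluation of the $c$-integral to $\gamma^{-1}\mu^{-s}\Gamma(s)Z_\epsilon^{-s}$ therefore fails, and there is no principal-branch logarithm to invoke. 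Even granting a definition of $Z_\epsilon^{-s}$, your heuristic ``auxiliary real Girsanov shift of size $|\beta_j|$'' does not bound $\mathbb{E}[|Z_\epsilon|^{-q}]$: Kahane's inequalities compare expectations of convex (or coordinatewise monotone) functionals of real log-correlated fields, and $|Z_\epsilon|^{-q}$ is neither, precisely because the smallness of $|Z_\epsilon|$ comes from cancellations rather than from the size of the underlying measure.

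The paper avoids this entirely by never forming a complex GMC and never integrating out the zero mode pathwise for complex parameters. The regularization removes small balls $B(z_j,e^{-t_j})$, so that the remaining chaos $M_\gamma(C_{\mathbf{t}})$ is strictly positive and $M_\gamma(C_{\mathbf{t}})^{-s}$ is unambiguous for complex $s$. Only the \emph{real} part $\alpha_j$ is absorbed by Girsanov; the imaginary part stays as the factor $e^{i\beta_j B_{t_j}+\frac{\beta_j^2}{2}t_j}$, a complex martingale in the radial Brownian motion $B_{t_j}$. Convergence as $t_j\to\infty$ is then obtained by two overlapping local arguments: for $\alpha_j>Q-\gamma$ one differentiates in $t_j$ via It\^o and bounds the derivative by a freezing estimate, giving decay $e^{-\frac{1}{2}((Q-\alpha_j)^2-\beta_j^2)t_j}$; for $\alpha_j<Q-\frac{\gamma}{2}$ one samples along the hitting times $T_N$ of the drifted Brownian motion and uses renewal/Kahane-decorrelation estimates, where the pencil condition enters as $\mathbb{E}[e^{\frac{\beta_j^2}{2}T_1}]<\infty\iff|\beta_j|<Q-\alpha_j$. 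The radial decomposition and stochastic calculus that you gesture at are thus applied to the \emph{regularized correlator with positive GMC}, not to a complex GMC $Z_\epsilon$; redirecting your argument to that object is what is needed.
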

The novelty in this paper is to apply various techniques from stochastic calculus to LCFT in order to obtain non-trivial estimates in the case with complex parameters. In particular, two different approaches are proposed to study the convergence in the region $\mathcal{R}$: either by applying Itô calculus or by using the renewal theory to a certain Browian motion associated with the vertex operators.

\subsection{Relation to the work of Kupiainen-Rhodes-Vargas}
Analytic continuation of LCFT via GMC approach was first studied in \cite[Section~4]{kupiainen2017integrability} and was essential for rigorously proving the DOZZ formula using probabilistic approach. The authors of \cite{kupiainen2017integrability} applied radial decomposition (see Section~\ref{sec:Radial} below for a brief review) to some GFF locally and obtained an exponential convergence of Liouville correlation functions along a subsequence of times. As a result, they were able to define analytic continuation of Liouville correlation functions in a small neighborhood near each real parameter $\alpha_j$.

Our work strengthens their result. We identify explicitly a large region $\mathcal{R}$ for which Liouville analytic continuation holds within this probabilistic framework; this is coherent with the physics literature \cite{harlow2011analytic} by Harlow-Maltz-Witten on analytic continuation of Liouville theory, although we are not able to identify our pencil region with the physical region of \cite{harlow2011analytic}. Our method yields in some particular cases explicit formulas and precise estimations on Liouville correlation functions, and we expect to recover or obtain exact formulas using methods based on these new observations.

\subsection{Motivations from conformal bootstrap program}
The problem of investigating analytic continuation of Liouville correlation functions in the parameters $\{\alpha_i\}$ fits naturally within the ongoing program in constructive LCFT of Kupiainen-Rhodes-Vargas, of which the goal is to unify two different approaches to LCFT in physics: the path-integral approach and the conformal bootstrap approach. Without going into too much details, let us cite for example one formula from the conformal bootstrap picture that remains mathematically conjectural, see \cite{teschner2001liouville} for example for a detailed discussion:
\begin{equation}
\left<V_{\alpha_1}(z)V_{\alpha_2}(0)V_{\alpha_3}(1)V_{\alpha_4}(\infty)\right>=\int_{\mathbb{R}_+}C_\gamma(\alpha_1,\alpha_2,Q+iP)C_\gamma(Q-iP,\alpha_3,\alpha_4)\left|\mathcal{F}_{Q+iP,\{\alpha_i\}}(z)\right|^2dP
\end{equation}
where $C_\gamma(\alpha_1,\alpha_2,Q+iP)$ is the three-point correlation function: its value is explicitly known since the DOZZ formula. On the other hand, $\mathcal{F}_{\alpha,\{\alpha_i\}}(z)$ are called universal conformal blocks and are explicit meromorphic functions depending only on $Q+iP$ and $\{\alpha_i\}$. The line $Q+i\mathbb{R}_+$ over which we integrate should correspond to the spectrum of LCFT.

It is tempting to give probabilitic interpretation to this decomposition over the spectrum $Q+iP$ for $P\geq 0$, and to deduce for instance an probabilistic expression for computing the conformal blocks that appear above. Recent progress has been made towards this goal (see \cite{kupiainen2016constructive,baverez2018modular,baverez2018fusion}) and the current article is motivated by defining the vertex operators over the critical line, i.e. with parameter $Q+iP$ for $P\in\mathbb{R}$. In other words, we investigate the question of whether the general $n$-point Liouville correlation function $C_\gamma(\{\alpha_i\},Q+iP)$ can be defined directly using the probabilistic approach of \cite{david2016liouville}. While this idea is directly inspired by the physicists \cite{harlow2011analytic} but based on probabilistic constructions \cite{david2016liouville,kupiainen2017integrability}, we are not able to get very close to the critical line at this moment: the regularization procedure used in the current version diverges when we get out of of the pencil region that we identify explicitly, where same kind of phenomenon has been observed in \cite{harlow2011analytic}. We intend to continue the study of this problem in an upcoming work.

\subsection{Acknowledgements}
We gratefully appreciate stimulating advices from Antti Kupiainen, Rémi Rhodes and Vincent Vargas. We are indebted to Rémi Rhodes in particular for communicating some crucial ideas. We also thank Guillaume Baverez, Linxiao Chen and Joona Oikarinen for discussions. The work is finished at the Isaac Newton Institute during the program \emph{Scaling limits, rough paths, quantum field theory} and started at the Institut Mittag-Leffler during the program \emph{Fractal geometry and dynamics}: we wish to thank both institutes for their very warm hospitality.

\section{Preliminaries}\label{sec:Preliminaries}

\emph{Notations.} Throughout this article, $X$ will denote a Gaussian random field in some region of the Riemann sphere $\widehat{\mathbb{C}}=\mathbb{C}\cup\{\infty\}$. Parameters $\alpha_j+i\beta_j\in\mathbb{C}$ will be complex numbers, and $\{z_i\}$ are points on the Riemann sphere $\widehat{\mathbb{C}}$. We denote by $\bm{z}$, $\bm{\alpha+i\beta}$, $\bm{r}$ some $n$-dimensional vectors of resp. $z,\alpha+i\beta,r$. We also use classical notations $\gamma\in(0,2)$, $Q=\frac{2}{\gamma}+\frac{\gamma}{2}$ and $\mu>0$ to denote parameters in the Liouville action as in \cite{david2016liouville}.
\subsection{Geometric setup}\label{sec:metric}
Throughout the rest of this article we will use a fixed metric $\mathbf{g}(z)d^2 z$ on the Riemann sphere $\widehat{\mathbb{C}}=\mathbb{C}\cup\{\infty\}$ of the form
\begin{equation}
\mathbf{g}(z)\coloneqq |z|_+^{-4}
\end{equation}
where $|z|_+=|z|\vee 1$. This metric has scalar curvature
\begin{equation}
R_\mathbf{g}(z)=-4\mathbf{g}^{-1}\partial_z\partial_{\overline{z}}\ln\mathbf{g}(z)=4\nu
\end{equation}
with $\nu$ the uniforme measure on the circle $\partial B(0,1)$.
\subsection{Gaussian Free Field}\label{sec:GFFSetup}
The GFF (for mathematical backgrounds, see \cite{sheffield2007gaussian,dubedat2009sle,david2016liouville}) with zero average in the uniforme measure on the circle $\nu$ has covariance kernel
\begin{equation}
K(x,y)\coloneqq\mathbb{E}[X(x)X(y)]=-\ln|x-y|+\ln|x|_{+}+\ln|y|_{+}
\end{equation}
for $x,y\in\mathbb{C}\cup\{\infty\}$. Notice that inside the unit disk $\mathbb{D}=B(0,1)$,
\begin{equation}
K(x,y)=K_{\mathbb{D}}(x,y)\coloneqq-\ln|x-y|
\end{equation}
where $K_{\mathbb{D}}$ is the (Neumann boundary condition) Green function inside the unit disk $\mathbb{D}$.

\subsection{Local radial decomposition}\label{sec:Radial}
Let $X$ be the centered log-correlated Gaussian field on the unit disk $\mathbb{D}=B(0,1)$ of covariance kernel
\begin{equation}\label{eq:G_D}
K_\mathbb{D}(x,y)\coloneqq\mathbb{E}[X(x)X(y)]=\ln\frac{1}{|x-y|}.
\end{equation}
Recall that $X(x)$ can be defined as a distribution (in the sense of Schwartz) and admits the following decomposition:
\begin{lemm}[Radial decomposition for log-correlated Gaussian field]\label{lem:Radial}
For every $x\in\mathbb{D}\backslash\{0\}$, $X(x)$ can be written as
\begin{equation}
X(x)=X_{|x|}(0)+N(x)
\end{equation}
where $X_{|x|}(0)$ is the circle-average on center $0$ and radius $|x|$ defined for $r>0$ as
\begin{equation}
X_{r}(0)=\frac{1}{2\pi}\int_{0}^{2\pi}X(r e^{i\theta})d\theta
\end{equation}
and $N(x)$ is the lateral noise distribution defined as
\begin{equation}
N(x)=X(x)-X_{|x|}(0).
\end{equation}
We have the following properties:
\begin{enumerate}
	\item The Gaussian fields $\{X_r(0), 0<r<1\}$ and $\{N(x), x\in\mathbb{D}\backslash\{0\}\}$ are independent.
	\item The process $B_t\coloneqq\{X_{e^{-t}}(0)\}$ is a standard Brownian motion.
\end{enumerate}
\end{lemm}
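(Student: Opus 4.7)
The plan is to exploit joint Gaussianity: once $X(x) = X_{|x|}(0) + N(x)$ is written down as in the statement, the decomposition itself is tautological, and the real content reduces to (a) checking that the circle-average process and the lateral noise are uncorrelated, and (b) identifying the covariance of the circle-average process as that of standard Brownian motion. Both parts rest on a single classical potential-theoretic identity, the mean value formula
\[
\frac{1}{2\pi}\int_0^{2\pi}\ln|re^{i\theta}-x|\,d\theta=\ln\max(r,|x|),
\]
which follows from the harmonicity of $w\mapsto \ln|w-x|$ off the point $x$ (the case $|x|\geq r$ is the usual mean value property on the disk $B(0,r)$, while the case $|x|<r$ follows from expanding $\ln(1-xe^{-i\theta}/r)$ in a convergent Fourier series whose nonzero modes integrate to $0$).

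First, I would apply this identity to compute $\mathbb{E}[X_r(0)X(x)]=\min(\ln(1/r),\ln(1/|x|))$ by integrating the kernel $K_\mathbb{D}(re^{i\theta},x)$ against $d\theta/(2\pi)$ and using Fubini. Iterating the same formula on the outer circle yields $\mathbb{E}[X_r(0)X_{r'}(0)]=\min(\ln(1/r),\ln(1/r'))$. Subtracting these two expressions then gives $\mathbb{E}[X_r(0)N(x)]=0$ for every $r\in(0,1)$ and every $x\in\mathbb{D}\setminus\{0\}$; since the family $\{X_r(0)\}\cup\{N(x)\}$ is jointly centered Gaussian, this pointwise orthogonality upgrades to the independence of the two sub-processes claimed in item~(1).

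For item~(2), setting $B_t:=X_{e^{-t}}(0)$ the previous computation gives $\mathbb{E}[B_sB_t]=\min(s,t)$, while the same formula forces $\mathrm{Var}(B_0)=\mathrm{Var}(X_1(0))=\ln(1/1)=0$, so that $B_0=0$ almost surely. Combined with the continuity of $r\mapsto X_r(0)$ (a standard feature of the circle-average process for log-correlated fields, obtained via Kolmogorov's criterion applied to the Gaussian increments), this identifies $B_t$ as a standard Brownian motion. The only technical subtlety I anticipate is that $X$ is only a Schwartz distribution, so all of the pointwise identities above must be justified by working with a mollification $X_\varepsilon$, performing the covariance calculations for the regularized quantities, and passing to the limit $\varepsilon\to 0$ using $L^2$ convergence of the circle averages; this passage is routine and is carried out in detail in \cite{david2016liouville,sheffield2007gaussian}.
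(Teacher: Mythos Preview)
Your proposal is correct and follows precisely the approach the paper indicates: the paper does not spell out a proof but simply remarks that the lemma ``is well-known in the literature'' and that ``one can also verify this lemma directly on the specific kernel $K_{\mathbb{D}}$ \ldots\ by calculating explicitly the covariance of each Gaussian field.'' Your argument carries out exactly that explicit covariance computation via the mean-value identity for the logarithm, so it matches the paper's intended route.
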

This decomposition is well-known in the literature: for example see \cite{sheffield2007gaussian,duplantier2014liouville,kupiainen2017integrability}. One can also verify this lemma directly on the specific kernel $K_\mathbb{D}$ of equation~\eqref{eq:G_D} by calculating explicitly the covariance of each Gaussian field.
\begin{rema}[Independence property]\label{rem:Independence}
We record here a useful property of the radial decomposition. On can extend the radial decomposition procedure to balls of the form $B(z,1)$ for $z\in\mathbb{C}$ by conformal mapping and in particular, the process $\widetilde{B}_t\coloneqq\{X_{e^{-t}}(z)-X_{0}(z)\}$ is also a Brownian motion starting at $0$. Then for disjoint balls $B(z_j,1)\subset B(0,1)^{c}$, these Brownian motions are mutually independent and independent of the $\sigma$-algebra generated by $\{X(z);z\notin\cup_j B(z_j,1)\}$.
\end{rema}

\subsection{Gaussian Multiplicative Chaos}\label{sec:Chaos}
The study of GMC measures started with the seminal work \cite{kahane1985chaos} of Kahane. The theory of GMC allows one to define mathematically exponentials of log-correlated Gaussian field in any dimension and in particular, the exponential of the GFF above in $2d$. We refer the reader to \cite{kahane1985chaos,robert2010gaussian,berestycki2017elementary} for more materials on the definition and convergence of GMC measure and \cite{rhodes2014gaussian} for more applications; let us briefly recall the idea and gather some notations here (we restrict ourselves to minimal setting, namely dimension $2$ and the metric $\mathbf{g}$):
\begin{defi}
Let $X$ be a $\log$-correlated field on a subdomain $\Omega\subset\mathbb{R}^2$ equipped with the metric $\mathbf{g}$. We can define the GMC measure associated to $X$ with parameter $\gamma\in(0,2)$,
\begin{equation}
M_{\gamma}(d^2z)\coloneqq e^{\gamma X(z)}\mathbf{g}(z)d^2z
\end{equation}
to be the limit of the family of random measures
\begin{equation}
M_{\gamma,\epsilon}(d^2z)\coloneqq e^{\gamma X_\epsilon(z)-\frac{\gamma^2}{2}\mathbb{E}[X_\epsilon(z)^2]}\mathbf{g}(z)d^2z
\end{equation}
as $\epsilon$ goes to $0$. Here, $X_\epsilon(z)$ denotes a regularization by a smooth mollifier of the field $X$ (another common regularization is by circle average similar to the previous section).
\end{defi}
Applying the radial decomposition to the GMC measures, we have the following:
\begin{prop}[Radial decomposition for GMC measures]
Let $X$ be the GFF as in Lemma~\ref{lem:Radial} and consider the domain $\Omega=\mathbb{D}\backslash B(0,e^{-t})$. The GMC random measure $M_\gamma(\Omega)$ associated to $X$ on the domain $\Omega$ has the following equivalent expression
\begin{equation}
M_\gamma(\Omega)\overset{(law)}{=}\int_{0}^{t}e^{\gamma(B_s-Qs)}Z_sds
\end{equation}
where
\begin{equation}
Z_s=\frac{1}{2\pi}\int_{0}^{2\pi}e^{\gamma Y(s,\theta)-\frac{\gamma^2}{2}\mathbb{E}[Y(s,\theta)^2]}d\theta
\end{equation}
with $Y(s,\theta)$ a Gaussian field (seen as a distribution) independent of $B_t$ with covariance kernel
\begin{equation}\label{eq:NoiseCorr}
\mathbb{E}[Y(s,\theta)Y(t,\theta')]=\ln\frac{e^{-s}\vee e^{-t}}{|e^{-s}e^{i\theta}-e^{-t}e^{i\theta'}|}.
\end{equation}
\end{prop}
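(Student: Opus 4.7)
The plan is to pass to polar coordinates $z = e^{-s}e^{i\theta}$ in the regularized GMC integral, apply the radial decomposition of Lemma~\ref{lem:Radial} to split the exponential into two independent factors, and then take the limit $\epsilon \to 0$.

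Since $\mathbf{g} \equiv 1$ inside $\mathbb{D}$, after the change of variables (Jacobian $e^{-2s}$, with $s \in (0,t)$) the regularized measure becomes
\begin{equation*}
M_{\gamma,\epsilon}(\Omega) = \int_0^t\!\int_0^{2\pi} e^{\gamma X_\epsilon(e^{-s}e^{i\theta}) - \frac{\gamma^2}{2}\mathbb{E}[X_\epsilon^2]}\, e^{-2s}\, d\theta\, ds.
\end{equation*}
Writing $X = X^{\mathrm{rad}} + N$ with $X^{\mathrm{rad}}(z) := X_{|z|}(0)$ and $N$ the lateral noise of Lemma~\ref{lem:Radial}, the two pieces are independent Gaussian distributions, so the variance splits as $\mathbb{E}[X_\epsilon^2] = \mathbb{E}[(X^{\mathrm{rad}}_\epsilon)^2] + \mathbb{E}[N_\epsilon^2]$ and the Wick exponential factors accordingly. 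With the circle-average regularization, $X^{\mathrm{rad}}_\epsilon(e^{-s}e^{i\theta}) = B_s$ as soon as $\epsilon \leq e^{-s}$, so this factor needs no renormalization in the bulk and converges to $e^{\gamma B_s - \frac{\gamma^2}{2}s}$. The identity $Q = \frac{2}{\gamma} + \frac{\gamma}{2}$ combines the Jacobian $e^{-2s}$ with the Wick correction $e^{-\frac{\gamma^2}{2}s}$ into $e^{-\gamma Q s}$, and setting $Y(s,\theta) := N(e^{-s}e^{i\theta})$ one recovers the claimed formula (up to the overall constant $2\pi$ implicit in the normalization of $Z_s$).

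To obtain the covariance \eqref{eq:NoiseCorr} of $Y$, I would compute directly: using $\mathbb{E}[X_r(0)X(x)] = -\ln(|x|\vee r)$ (the harmonic average of $-\ln|re^{i\phi}-x|$ in $\phi$), one finds
\begin{equation*}
\mathbb{E}[N(x)N(y)] = -\ln|x-y| + \ln(|x|\vee|y|),
\end{equation*}
which in the coordinates $z = e^{-s}e^{i\theta}$ is precisely the stated kernel.

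The main technical step will be the joint passage to the limit: one must justify that the product of the two factored Wick exponentials converges, jointly in $(s,\theta)$ and in an integrable sense, to the product of the limits. This reduces to the standard $L^2$ (or uniform integrability) convergence of GMC applied to the lateral noise $N$ conditionally on the Brownian motion $(B_s)_{0 \le s \le t}$, along the lines of \cite{berestycki2017elementary}. Independence of $X^{\mathrm{rad}}$ and $N$ is preserved through the limit and yields the equality in law. The subtle point is that the prefactor $e^{\gamma(B_s - Qs)}$ is unbounded in $s$, so one has to either truncate in $s$ and exploit the a.s. finiteness of $M_\gamma(\Omega)$, or work with the Girsanov shift of the Brownian motion $B_s \mapsto B_s + \gamma s$ that turns the subcritical weight $e^{\gamma(B_s-Qs)}$ into a convergent one.
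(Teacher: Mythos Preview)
Your proposal is correct and is precisely the standard derivation; the paper itself does not give a proof but simply refers the reader to \cite[Section~2.7]{kupiainen2017integrability}, where the same polar-coordinate/radial-decomposition computation is carried out. Your covariance calculation for $N$ and the identification $e^{-2s}e^{-\frac{\gamma^2}{2}s}=e^{-\gamma Qs}$ are both right, and you correctly flag the $2\pi$ normalization issue in $Z_s$.
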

\begin{proof}
For more details and discussions, see \cite[Section 2.7]{kupiainen2017integrability}.
\end{proof}

\subsection{Liouville correlation functions}
One main feature in the path integral formalism of LCFT as defined in \cite{david2016liouville} is that one can express Liouville correlation functions by means of GMC measures with $\log$-singularities. More precisely, by a change of variables, one can express the $n$-point correlation functions (with real parameters $\bm{\alpha}$) in the metric $\mathbf{g}$ (defined as in Section~\ref{sec:metric}) on the Riemann sphere $\mathbb{C}\cup\{\infty\}$ in the following manner:
\begin{equation}
\left<\prod_{j=1}^{n}V_{\alpha_j}(z_j)\right>=\frac{2}{\gamma}\mu^{-s}\Gamma(s)\prod_{k<l}\frac{1}{|z_k-z_l|^{\alpha_k\alpha_l}}\mathbb{E}\left[\left(\int_{\mathbb{C}}F(x,\mathbf{z})M_{\gamma}(d^2 x)\right)^{-s}\right]
\end{equation}
with
\begin{equation}
s=\frac{\sum_j\alpha_j-2Q}{\gamma}
\end{equation}
and
\begin{equation}
F(x,\mathbf{z})=\prod_{j=1}^{n}\left(\frac{|x|_+}{|x-z_j|}\right)^{\gamma\alpha_j}.
\end{equation}
In particular, the correlation function is well-defined if one can make sense of the negative moment and shows that it is positive: this can be done using GMC techniques (see \cite{david2016liouville,huang2018liouville}). One sufficient condition is known as the Seiberg bound \cite{seiberg1990notes}:
\begin{equation}
\forall i,\quad \alpha_i<Q\quad\text{and}\quad \sum\limits_{i}\alpha_i>2Q.
\end{equation}
We refer the reader to \cite{david2016liouville,kupiainen2017integrability} for proof and details on this expression. In the following, we will consequently study the analyticity of moments with negative real parts of GMC measure by the same regularization procedure as in \cite{kupiainen2017integrability}. Namely, we will study the convergence of the regularized $n$-point negative GMC measure:
\begin{equation}
G(\bm{\alpha+i\beta},\mathbf{z};\mathbf{t})\coloneqq\mathbb{E}\left[\prod_{j=1}^{n}e^{(\alpha_j+i\beta_j)X_{r_j}(z_j)-\frac{(\alpha_j+i\beta_j)^2}{2}t_j}M_{\gamma}(C_\mathbf{t})^{-s}\right]
\end{equation}
where $r_j=e^{-t_j}$, $C_{\mathbf{t}}$ is the regularized complex plane
\begin{equation}
C_{\mathbf{t}}\coloneqq \mathbb{C}-\bigcup_{j}B(z_j,e^{-t_j}),
\end{equation}
and the $r_j$-regularization procedure is the one described in Subsection~\ref{sec:Chaos}. Consequence of Remark~\ref{rem:Independence}, we identify $X_{r_j}(z_j)$ as independent Brownian motions and denote them by $B_j(t_j)$. Notice that the function $G(\bm{\alpha+i\beta},\mathbf{z};\mathbf{t})$ is complex differentiable in all components of $\mathbf{t}$, hence defines an multivariate entire function in the $\bm{\alpha}$.

Our goal is to establish conditions on local uniform convergence of $G_{\mathbf{t}}$ as $\mathbf{t}$ goes to infinity: this will yield analyticity of the limit function $G(\bm{\alpha+i\beta};\mathbf{z})$.
\subsection{Freezing estimate}
We use frequently in this paper an estimate on integrals of GMC measure with singularities known as the freezing estimate in the literature \cite{fyodorov2008freezing}:
\begin{lemm}\label{lem:Freezing}
For $\alpha>Q$ and $p>0$,
\begin{equation}
\mathbb{E}\left[\left(\int_{|x|>\epsilon}\frac{1}{|x|^{\gamma\alpha}}M_\gamma(d^2x)\right)^{-p}\right]\leq C\epsilon^{\frac{1}{2}(\alpha-Q)^2}
\end{equation}
and if $\mu>0$,
\begin{equation}
\mathbb{E}\left[\exp\left(-\mu\int_{|x|>\epsilon}\frac{1}{|x|^{\gamma\alpha}}M_\gamma(d^2x)\right)\right]\leq C\epsilon^{\frac{1}{2}(\alpha-Q)^2}.
\end{equation}
with some constant $C$ locally uniform in $\alpha$ and independent of $\epsilon$ when $\epsilon$ is small enough.
\end{lemm}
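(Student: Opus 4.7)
The plan is to combine the radial decomposition for GMC measures around the origin with a Girsanov shift that extracts the freezing factor. First, I split $\{|x|>\epsilon\} = \{\epsilon<|x|<1\}\cup\{|x|\ge 1\}$: the outer contribution is an a.s.\ positive random variable whose law is independent of $\epsilon$ and whose negative moments (and exponential moments of its negation) are finite by standard GMC theory, so both claims reduce to the annular piece, which carries the singularity.

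Parametrising $|x|=e^{-s}$ with $s\in[0,T]$ and $T=-\ln\epsilon$, and pulling out the factor $|x|^{-\gamma\alpha} = e^{\gamma\alpha s}$, the radial decomposition rewrites the annular integral as
\[
\int_{\epsilon<|x|<1}\frac{M_\gamma(d^2x)}{|x|^{\gamma\alpha}}\;=\;\int_0^T e^{\gamma\bigl(B_s+(\alpha-Q)s\bigr)}\,Z_s\,ds,
\]
where $B$ is the standard radial Brownian motion of Lemma~\ref{lem:Radial} and $Z$ is the independent lateral-noise integral of Section~\ref{sec:Chaos}. Since $\alpha>Q$ the drift $\alpha-Q$ is strictly positive, so I apply a Cameron--Martin change of measure with density $\exp\!\bigl(-(\alpha-Q)B_T-\tfrac12(\alpha-Q)^2 T\bigr)$: the shifted process $W_s:=B_s+(\alpha-Q)s$ is then a standard Brownian motion, and the density extracts exactly the target factor $e^{-(\alpha-Q)^2 T/2}=\epsilon^{(\alpha-Q)^2/2}$. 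Both inequalities are thereby reduced to the uniform-in-$T$ estimate
\[
\mathbb{E}\!\left[e^{(\alpha-Q)W_T}\,\Phi\!\left(\int_0^T e^{\gamma W_s}Z_s\,ds\right)\right]\;\le\;C,
\]
with $\Phi(u)=u^{-p}$ for the first inequality and $\Phi(u)=e^{-\mu u}$ for the second.

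The main obstacle is this uniform bound, since $e^{(\alpha-Q)W_T}$ can be arbitrarily large and must be absorbed by the lower tail of the $Z$-weighted exponential integral. I would handle it by time reversal on the last unit window: letting $\widehat W_u:=W_T-W_{T-u}$, which is itself a standard Brownian motion independent of the lateral-noise process, one obtains the pointwise bound
\[
\int_0^T e^{\gamma W_s}Z_s\,ds\;\ge\;e^{\gamma W_T}\int_0^1 e^{-\gamma\widehat W_u}Z_{T-u}\,du,
\]
and the unit-window GMC-type integral on the right is a.s.\ positive with finite negative moments of every order by standard GMC theory. For $\Phi(u)=e^{-\mu u}$ this lower bound produces a double-exponential factor $\exp(-\mu c\,e^{\gamma W_T})$ on a fixed positive-probability event, whose decay in $W_T$ dominates $e^{(\alpha-Q)W_T}$ pointwise and closes the estimate by a Laplace-type integration against the law of $W_T$. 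For $\Phi(u)=u^{-p}$ the bound yields $e^{((\alpha-Q)-\gamma p)W_T}$ times a finite GMC negative moment, which is immediate when $p\ge(\alpha-Q)/\gamma$; for the remaining small-$p$ range I would split on $\{W_T\le K\}$ versus $\{W_T>K\}$ and, on the latter event, apply the argument above at a higher moment, reconverting via Hölder within the large-$W_T$ event where the exponent loss is harmless. Local uniformity of $C$ in $\alpha$ is preserved throughout because every GMC moment bound and Gaussian tail estimate used depends continuously on $\alpha$ on compact subsets of $\{\alpha>Q\}$.
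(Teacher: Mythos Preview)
The paper does not actually prove this lemma; its proof consists of the single reference ``See \cite[Section~6]{kupiainen2015local}'', so there is no in-paper argument to compare your attempt against. Your Girsanov reduction is the standard opening and is in the spirit of that reference: shifting out the positive drift extracts exactly $\epsilon^{(\alpha-Q)^2/2}$ and reduces both inequalities to the uniform-in-$T$ bound you isolate. The exponential case $\Phi(u)=e^{-\mu u}$ then goes through essentially as you describe, provided one is careful that $W_T$ and the unit-window variable $I$ are \emph{not} independent; writing $W_T=W_{T-1}+\widehat W_1$ with $W_{T-1}$ independent of $(\widehat W,Z)$ and bounding the Gaussian density of $W_{T-1}$ pointwise by $(2\pi(T-1))^{-1/2}$ yields the Laplace-type integral you need.

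The genuine gap is in the power case $\Phi(u)=u^{-p}$. Your time-reversal lower bound leaves $e^{((\alpha-Q)-\gamma p)W_T}I^{-p}$, and for \emph{every} $p\ne(\alpha-Q)/\gamma$ one has $\mathbb{E}\bigl[e^{cW_T}\bigr]=e^{c^2T/2}\to\infty$ regardless of the sign of $c=(\alpha-Q)-\gamma p$; so the step ``immediate when $p\ge(\alpha-Q)/\gamma$'' does not close as written, because the last-window bound is useless when $W_T$ is large negative. Pairing it with the first-window bound $\int_0^T\ge\int_0^1 e^{\gamma W_s}Z_s\,ds$ and splitting on the sign of $W_T$ repairs the range $p\ge(\alpha-Q)/\gamma$. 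For $p<(\alpha-Q)/\gamma$, however, your H\"older sketch is too vague to assess, and you should note that any single unit-window lower bound (pre-Girsanov) gives at best the rate $\epsilon^{\gamma p(\alpha-Q)-\gamma^2p^2/2}$, which is strictly weaker than $\epsilon^{(\alpha-Q)^2/2}$; obtaining the freezing exponent uniformly in small $p$ requires a sharper input than the one you outline.
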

\begin{proof}
See \cite[Section~6]{kupiainen2015local}.
\end{proof}
In Appendix~\ref{app:freezing} we provide a variant that slightly generalizes the above estimate.

\subsection{Stopping time of drifted Brownian motion}
Let $\alpha<Q$ and consider a negatively drifted Brownian motion $B_t-(Q-\alpha)t$. We define the following stopping times for $n\in\mathbb{N}$:
\begin{equation}
T_n=\inf\{s:B_s-(Q-\alpha)s=-n\}.
\end{equation}
Recall several elementary facts that we will use in the following.
\begin{prop}[Elementary facts on stopping time of drifted Brownian motion]\label{prop:renewalprop}
We recall some basic facts on drifted Brownian motion:
\begin{enumerate}
	\item The law of $T_1$ follows an inverse Gaussian with parameter $IG((Q-\alpha)^{-1},1)$, i.e. its probability density function is
	\begin{equation}\label{eq:T1density}
	\left(\frac{1}{2\pi x^3}\right)^{\frac{1}{2}}\exp\left(-\frac{((Q-\alpha)x-1)^2}{2x}\right)dx.
	\end{equation}
	In particular, from the exponential tail of the above density function,
	\begin{equation}
	\mathbb{E}\left[e^{\frac{\beta^2}{2}T_1}\right]<\infty
	\end{equation}
	if and only if $|\beta|<Q-\alpha$.
	\item The sequence $(T_{i+1}-T_{i})_{i\in\mathbb{N}}$ is an i.i.d. sequence distributed as $T_1$.
	\item Let $t>0$. Define the residual time of the sequence $(T_{i})_{i\in\mathbb{N}}$ at time $t$ as
	\begin{equation}
	R_T(t)=\inf\{T_n;T_n>t\}-t.
	\end{equation}
	Then for all fixed $t>0$ and $|\beta|<Q-\alpha$,
	\begin{equation}
	\mathbb{E}\left[e^{\frac{\beta^2}{2}R_T(t)}\right]<\infty.
	\end{equation}
\end{enumerate}
\end{prop}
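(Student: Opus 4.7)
The plan is to address the three parts in order: the first two are classical, while the third requires a short renewal-theoretic argument.

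For (1), I would identify $T_1$ with the first passage time to level $1$ of the positively drifted Brownian motion $U_s\coloneqq -B_s+(Q-\alpha)s$. The inverse Gaussian density \eqref{eq:T1density} is the classical formula for this hitting time, obtainable by applying the optional stopping theorem to the exponential martingale $e^{\lambda U_s-(\lambda(Q-\alpha)+\lambda^2/2)s}$, or by combining Girsanov with the reflection principle for undrifted Brownian motion. To extract the criterion for finiteness of the exponential moment, I would complete the square in the exponent of \eqref{eq:T1density}, reducing the integrand to
\[
\frac{e^{Q-\alpha}}{\sqrt{2\pi x^3}}\exp\!\left(-\frac{(Q-\alpha)^2-\beta^2}{2}\,x - \frac{1}{2x}\right);
\]
the $e^{-1/(2x)}$ factor controls the singularity at $0$, and the tail at $x\to\infty$ is integrable precisely when $(Q-\alpha)^2>\beta^2$. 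Part (2) is then immediate from the strong Markov property of Brownian motion at the stopping time $T_i$: the shifted process $s\mapsto(B_{T_i+s}-B_{T_i})-(Q-\alpha)s$ is a fresh drifted Brownian motion starting at $0$ and independent of $\mathcal{F}_{T_i}$, and $T_{i+1}-T_i$ is by construction its first passage time to $-1$, hence distributed as $T_1$.

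For (3), the key observation is the crude bound $R_T(t)\leq T_{N(t)+1}-T_{N(t)}$ where $N(t)\coloneqq\#\{n\geq 1:T_n\leq t\}$ is the renewal counting function, which holds because $T_{N(t)}\leq t$ by definition. Decomposing according to the value of $N(t)$ and writing $S_n\coloneqq T_{n+1}-T_n$,
\[
\mathbb{E}\!\left[e^{\frac{\beta^2}{2}R_T(t)}\right]\leq\sum_{n\geq 0}\mathbb{E}\!\left[e^{\frac{\beta^2}{2}S_n}\mathbf{1}_{T_n\leq t}\mathbf{1}_{S_n>t-T_n}\right].
\]
By part (2), $S_n$ is distributed as $T_1$ and independent of $T_n$; conditioning on $T_n$ inside the expectation bounds each summand by $\mathbb{E}[e^{\beta^2 T_1/2}]\cdot\mathbb{P}(T_n\leq t)$. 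Summing in $n$ yields
\[
\mathbb{E}\!\left[e^{\frac{\beta^2}{2}R_T(t)}\right]\leq \mathbb{E}\!\left[e^{\beta^2 T_1/2}\right]\cdot\bigl(1+\mathbb{E}[N(t)]\bigr),
\]
and both factors are finite at fixed $t$: the first by part (1), and the renewal function $\mathbb{E}[N(t)]$ because $\mathbb{E}[T_1]=(Q-\alpha)^{-1}<\infty$ together with the elementary renewal bound $\mathbb{E}[N(t)]\leq Ct$.

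The only point requiring any care is in (3): the length $T_{N(t)+1}-T_{N(t)}$ of the renewal interval containing $t$ is size-biased and does not have the law of $T_1$, so a naive estimate would seem to demand a stronger tail condition than $|\beta|<Q-\alpha$. The decomposition above avoids this by summing the contributions of each renewal interval separately and exploiting the independence granted by (2), so that the integrability condition ultimately reduces exactly to the tail bound on $T_1$ from (1).
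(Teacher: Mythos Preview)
Your proof is correct. For parts (1) and (2) you sketch the classical arguments, which the paper does not bother to write out; for part (3) your argument is sound and the final bound $(1+\mathbb{E}[N(t)])\,\mathbb{E}[e^{\beta^2 T_1/2}]$ is exactly what is needed.

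The paper takes a slightly different route for (3): it quotes the classical density formula for the residual time,
\[
\phi(x,t)=f(t+x)+\int_0^t f(x+u)\,m'(t-u)\,du,
\]
and bounds it pointwise by $(1+m(t))f(x)$ for large $x$ using monotonicity of $f$. Integrating against $e^{\beta^2 x/2}$ then gives precisely your bound. So the two arguments are really the same renewal decomposition executed at different levels: the paper works with densities and invokes a standard formula, while you work directly with expectations and the event decomposition $\{N(t)=n\}$. Your version is a bit more self-contained in that it does not need the reader to recognise or look up the residual-time density formula; the paper's version makes the tail comparison $\phi(x,t)\leq Cf(x)$ explicit, which is marginally more informative but not needed here.
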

We provide in Appendix~\ref{app:renewal} a proof for the last claim.

\subsection{Girsanov theorem}
We will apply Girsanov theorem to some exponential functionals of a GFF (or a Brownian motion) in the following form:
\begin{lemm}[Girsanov theorem]\label{lem:Girsanov}
Let $Y$ be some Gaussian variable measurable with respect to a Gaussian free field $X$ and $F$ some bounded functional. Then
\begin{equation}
\mathbb{E}[e^{Y-\frac{\mathbb{E}[Y]^2}{2}}F(X(\cdot))]=\mathbb{E}[F(X(\cdot)+\mathbb{E}[X(\cdot)Y])].
\end{equation}
\end{lemm}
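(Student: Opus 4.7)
The plan is a standard Cameron–Martin / Girsanov argument. First I would introduce the tilted probability measure $d\mathbb{Q} \coloneqq e^{Y-\frac{1}{2}\mathbb{E}[Y^2]}\, d\mathbb{P}$; the identity $\mathbb{E}[e^{Y - \frac{1}{2}\mathbb{E}[Y^2]}] = 1$, which is just the Laplace transform of the centered Gaussian $Y$, verifies that $\mathbb{Q}$ is a bona fide probability measure. The goal then reduces to showing that under $\mathbb{Q}$ the field $X$ has the same law as $X + \mathbb{E}[X(\cdot)Y]$ under $\mathbb{P}$; once this is established, integrating the bounded functional $F$ against both laws concludes the proof.

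To identify the law of $X$ under $\mathbb{Q}$, I would test against cylindrical functionals of the form $e^{i\langle X, \varphi\rangle}$ with $\varphi$ a smooth test function, since such functionals separate points of distribution space. Because $Y$ is measurable with respect to the GFF $X$ and is Gaussian, the pair $(Y, \langle X, \varphi\rangle)$ is jointly centered Gaussian with cross covariance
\begin{equation*}
\mathbb{E}[Y \langle X, \varphi\rangle] = \langle \mathbb{E}[X(\cdot)Y], \varphi\rangle
\end{equation*}
by Fubini. Completing the square in the joint Gaussian exponent then yields
\begin{equation*}
\mathbb{E}\!\left[e^{Y - \frac{1}{2}\mathbb{E}[Y^2]}\, e^{i \langle X, \varphi\rangle}\right] = \exp\!\left(-\frac{1}{2}\mathbb{E}[\langle X, \varphi\rangle^2] + i\,\langle \mathbb{E}[X(\cdot)Y], \varphi\rangle\right),
\end{equation*}
which is precisely the characteristic functional of the shifted field $X + \mathbb{E}[X(\cdot)Y]$ evaluated at $\varphi$. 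Equivalently, one may decompose $X = \frac{\mathbb{E}[X(\cdot)Y]}{\mathbb{E}[Y^2]}\,Y + Z$ with $Z$ a Gaussian field independent of $Y$, and observe that under $\mathbb{Q}$ the variable $Y$ is Gaussian with mean $\mathbb{E}[Y^2]$ and the same variance, giving the same conclusion.

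Finally, the extension from the characteristic functionals to arbitrary bounded measurable $F$ is a routine monotone-class argument: the family $\{e^{i\langle X, \varphi\rangle}\}_{\varphi}$ is multiplicative and generates the Borel $\sigma$-algebra of distribution space. The only point requiring some care — rather than a real obstacle — is the interpretation of $F(X(\cdot))$ and of $\mathbb{E}[X(\cdot)Y]$ when $X$ is a random distribution. In every application of this lemma within the paper, $Y$ arises as an integral of $X$ against a smooth or mildly singular weight (for instance a circle-average insertion such as $\alpha_j X_{r_j}(z_j)$, or a pairing against a GMC density), so the shift $\mathbb{E}[X(\cdot)Y]$ is an explicit function computed directly from the covariance kernel $K$ via Fubini, and the formula is unambiguous.
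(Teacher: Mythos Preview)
Your argument is correct and entirely standard. The paper does not actually supply a proof of this lemma: it is stated as a classical Cameron--Martin/Girsanov identity and used as a black box throughout. Your sketch --- tilting by the exponential martingale, identifying the $\mathbb{Q}$-law of $X$ via its characteristic functional, and extending by a monotone-class argument --- is exactly the textbook derivation one would expect, and there is nothing to compare against. (You also silently corrected the typo in the exponent: the normalization should be $\frac{1}{2}\mathbb{E}[Y^2]$ rather than $\frac{1}{2}\mathbb{E}[Y]^2$.)
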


\subsection{Kahane's inequality}
We record two versions of Kahane's Gaussian comparaison inequality.
\begin{lemm}[Kahane's convexity inequality]\label{lem:KahaneConvexity}
Let $X,Y$ be two centered Gaussian field indexed by $T$ such that
\begin{equation}
\mathbb{E}\left[X(i)X(j)\right]\leq\mathbb{E}\left[Y(i)Y(j)\right],\quad \forall (i,j)\in T\times T.
\end{equation}
Then for all non-negative weights $(p_i)_{i\in T}$ and all convex function $F$ with at most polynomial growth at infinity,
\begin{equation}
\mathbb{E}\left[F\left(\sum\limits_{i\in T}p_i e^{X_i-\frac{1}{2}\mathbb{E}[X_i^2]}\right)\right]\leq\mathbb{E}\left[F\left(\sum\limits_{i\in T}p_i e^{Y_i-\frac{1}{2}\mathbb{E}[Y_i^2]}\right)\right].
\end{equation}
\end{lemm}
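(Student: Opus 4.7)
The plan is to carry out the classical Gaussian interpolation argument of Kahane. I would first realise $X$ and $Y$ as independent centered Gaussian families on a common probability space, and define the interpolated field
\[
Z_i(t) \coloneqq \sqrt{1-t}\,X_i + \sqrt{t}\,Y_i,\qquad t \in [0,1].
\]
By independence, $\mathbb{E}[Z_i(t)Z_j(t)] = (1-t)\,\mathbb{E}[X_iX_j] + t\,\mathbb{E}[Y_iY_j]$, so the derivative in $t$ of the covariance is exactly $\mathbb{E}[Y_iY_j]-\mathbb{E}[X_iX_j]$, which is non-negative by assumption (including along the diagonal $i=j$).

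Set $M_i(t) \coloneqq p_i\,e^{Z_i(t) - \frac{1}{2}\mathbb{E}[Z_i(t)^2]}$, $S(t) \coloneqq \sum_{i \in T} M_i(t)$, and $\varphi(t) \coloneqq \mathbb{E}[F(S(t))]$. Differentiating under the expectation, one has $\dot M_i(t) = M_i(t)\bigl(\dot Z_i(t) - \tfrac{1}{2}(\mathbb{E}[Y_i^2]-\mathbb{E}[X_i^2])\bigr)$. Applying Gaussian integration by parts (Lemma~\ref{lem:Girsanov} in its infinitesimal Stein form) to the $\dot Z_i(t)$ factor inside $\mathbb{E}[F'(S(t))\,M_i(t)\,\dot Z_i(t)]$, the first-order terms precisely cancel the Itô-type correction $\tfrac{1}{2}(\mathbb{E}[Y_i^2]-\mathbb{E}[X_i^2])$, and what remains is
\[
\varphi'(t) \;=\; \frac{1}{2}\sum_{i,j\in T}\bigl(\mathbb{E}[Y_iY_j]-\mathbb{E}[X_iX_j]\bigr)\,\mathbb{E}\!\left[F''(S(t))\,M_i(t)M_j(t)\right].
\]
Since $F$ is convex, $F'' \geq 0$ (at worst in the sense of distributions), the weights $p_i$ and the chaos variables $M_i(t)M_j(t)$ are non-negative, and the covariance differences are non-negative by hypothesis. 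Hence $\varphi'(t) \geq 0$ on $[0,1]$, and integrating from $t=0$ to $t=1$ yields the desired inequality $\mathbb{E}[F(S(0))]\leq\mathbb{E}[F(S(1))]$.

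The main technical obstacle is the justification of the integration by parts when $F$ is only assumed convex with polynomial growth, hence not necessarily $C^2$ or even differentiable. I would handle this by first reducing to the case where $T$ is finite (the general case then follows by a standard monotone/dominated approximation on finite subsums, which is what is actually needed for our later applications to GMC), and then regularising $F$ by convolution with a smooth compactly supported mollifier $\rho_\varepsilon$: the resulting $F_\varepsilon \coloneqq F \ast \rho_\varepsilon$ is convex, $C^\infty$, and controlled by the same polynomial growth as $F$ up to a uniform constant. The explicit Gaussian moment bounds on $S(t)$ (which are finite of every order since $T$ is finite) then provide the uniform integrability required both for differentiation under the expectation and for Stein's identity, and let one pass to the limit $\varepsilon \to 0$ in the final inequality to recover the statement for the original $F$.
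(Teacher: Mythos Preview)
Your interpolation argument is correct and is precisely the classical proof due to Kahane; the paper itself does not give a proof but simply refers to \cite[Theorem~2.1]{rhodes2014gaussian}, where exactly this argument appears. One small quibble: invoking Lemma~\ref{lem:Girsanov} ``in its infinitesimal Stein form'' is a slight abuse, since as stated that lemma is a Cameron--Martin shift rather than Gaussian integration by parts; it would be cleaner to cite Stein's identity $\mathbb{E}[G\,f(G)]=\mathbb{E}[G^2]\,\mathbb{E}[f'(G)]$ (or its multivariate version) directly.
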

\begin{proof}
See \cite[Theorem~2.1]{rhodes2014gaussian} for references in English.
\end{proof}
\begin{lemm}[Kahane-Slepian diagonal inequality]\label{lem:KahaneDiagonal}
Let $X,Y$ be two centered Gaussian fields indexed by $T$ such that there exist subsets $A,B\subset T$ on which
\begin{align}
\mathbb{E}\left[X(i)X(j)\right]\leq\mathbb{E}\left[Y(i)Y(j)\right],&\quad \forall(i,j)\in A;\\
\mathbb{E}\left[X(i)X(j)\right]\geq\mathbb{E}\left[Y(i)Y(j)\right],&\quad \forall(i,j)\in B;\\
\mathbb{E}\left[X(i)X(j)\right]=\mathbb{E}\left[Y(i)Y(j)\right],&\quad \forall(i,j)\notin A\cup B.
\end{align}
Suppose $F:\mathbb{R}^{T}\to\mathbb{R}$ is some smooth real functional with appropriate growth at infinity in both its first and second derivatives and such that
\begin{align}
\partial_{ij}F\geq 0,&\quad \forall(i,j)\in A;\\
\partial_{ij}F\leq 0,&\quad \forall(i,j)\in B.
\end{align}
Then we have
\begin{equation}
\mathbb{E}\left[F(X)\right]\leq\mathbb{E}\left[F(Y)\right].
\end{equation}
\end{lemm}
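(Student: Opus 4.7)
The plan is to adapt the standard Gaussian interpolation argument used to prove Kahane's convexity inequality (Lemma~\ref{lem:KahaneConvexity}), observing that convexity of $F$ in each argument is never actually needed: only sign conditions on the mixed second partials coupled with matching sign conditions on the covariance differences. First I would reduce to the case where $X$ and $Y$ are realized on the same probability space and are independent (enlarging the space if necessary). When $T$ is infinite, I would also reduce to a finite sub-index by a density/approximation argument whose justification relies on the assumed growth at infinity of $F$ and its first and second derivatives; from here on I treat $T$ as finite.

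Next, I would introduce the interpolation
\begin{equation*}
Z(t) \coloneqq \sqrt{1-t}\,X + \sqrt{t}\,Y,\qquad t\in[0,1],
\end{equation*}
so that $Z(0)=X$, $Z(1)=Y$, and the covariance of $Z(t)$ is the linear interpolate of those of $X$ and $Y$. Differentiating in $t$ under the expectation gives
\begin{equation*}
\frac{d}{dt}\mathbb{E}[F(Z(t))] = \sum_{i\in T}\mathbb{E}\!\left[\partial_i F(Z(t))\Bigl(\tfrac{1}{2\sqrt{t}}\,Y_i - \tfrac{1}{2\sqrt{1-t}}\,X_i\Bigr)\right].
\end{equation*}
Applying Gaussian integration by parts term by term — using that $X$ and $Y$ are independent, so that the covariance of the perturbation $\tfrac{1}{2\sqrt{t}}Y_i - \tfrac{1}{2\sqrt{1-t}}X_i$ with $Z_j(t)$ is exactly $\tfrac{1}{2}(\mathbb{E}[Y_iY_j]-\mathbb{E}[X_iX_j])$ — yields the key identity
\begin{equation*}
\frac{d}{dt}\mathbb{E}[F(Z(t))] = \tfrac{1}{2}\sum_{i,j\in T}\bigl(\mathbb{E}[Y_iY_j]-\mathbb{E}[X_iX_j]\bigr)\,\mathbb{E}[\partial_{ij}F(Z(t))].
\end{equation*}

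The final step is to split this double sum along the partition of $T\times T$ into $A$, $B$, and the complement. On $A$ both factors are non-negative (by the hypotheses on the covariances and on $\partial_{ij}F$), on $B$ both are non-positive (again giving a non-negative product), and outside $A\cup B$ the covariance difference vanishes. Hence $\tfrac{d}{dt}\mathbb{E}[F(Z(t))]\geq 0$ for all $t\in(0,1)$, and integrating yields $\mathbb{E}[F(X)]\leq\mathbb{E}[F(Y)]$.

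The main obstacle is not conceptual but technical: justifying the differentiation under the expectation sign and the Gaussian integration by parts when $T$ is infinite-dimensional (the intended setting, where $T$ indexes a Gaussian field on a continuous domain). The phrase \emph{appropriate growth at infinity in both its first and second derivatives} in the statement is precisely what is required to run these steps through an exhaustion of $T$ by finite subsets, after which the sign-partition argument and the final integration over $t\in[0,1]$ are formal and dimension-free.
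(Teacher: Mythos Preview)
Your proposal is correct and is exactly the classical Gaussian interpolation argument underlying Slepian--Kahane comparison. The paper itself does not supply a proof for this lemma but simply refers to Zeitouni's lecture notes (Theorem~3 there), which carry out precisely the interpolation $Z(t)=\sqrt{1-t}\,X+\sqrt{t}\,Y$, differentiation under the expectation, Gaussian integration by parts, and the sign analysis of $\sum_{i,j}(\mathbb{E}[Y_iY_j]-\mathbb{E}[X_iX_j])\,\mathbb{E}[\partial_{ij}F(Z(t))]$ that you describe; so your write-up is essentially a self-contained expansion of the cited reference rather than a different route.
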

\begin{proof}
See \cite{zeitouniGaussian}, Theorem 3.
\end{proof}

\section{Local study: setup and regularization}\label{sec:LocalSetup}
To prove the Main Theorem~\ref{th:MainTheorem}, it is instumental to study its local version, namely the regularization and convergence near only $1$ insertion point at $z=0$ with complex parameter $\alpha+i\beta$. We will use study two different regularization procedures that yields probabilistically the correct analytic continuation of (local) Liouville correlation functions in the region
\begin{equation}
\mathcal{R}_{loc}=\{\alpha+i\beta\in\mathbb{C}; |\beta|<Q-\alpha\}.
\end{equation}

More precisely, we will seperate two different regimes:
\begin{itemize}
	\item In the region \begin{equation}\mathcal{R}^{M}_{loc}=\mathcal{R}_{loc}\cap\{\alpha>Q-\gamma\}\end{equation} we apply the so-called martingale method, Theorem~\ref{th:LocalMartingale};
	\item In the region \begin{equation}\mathcal{R}^{T}_{loc}=\mathcal{R}_{loc}\cap\{\alpha<Q-\frac{\gamma}{2}\}\end{equation} we apply the so-called stopping time method, Theorem~\ref{th:LocalStoppingTime}.
\end{itemize}

Each of these methods will yield in a probabilistic way a natural analytic continuation of (local) Liouville correlation functions. Since their intersection contains a non empty open set, together they extend the Liouville correlation functions analytically to the whole region $\mathcal{R}_{loc}$.

\subsection{Setup and notations}
We first define the local version of Liouville correlation function which reflects the regularization procedure near one insertion point at $z=0$.
\begin{defi}[Liouville correlation function: local version and regularization]\label{def:LocalCorrelation}
Consider the unit disk $\mathbb{D}\subset\mathbb{C}$ parametrized by $(s,\theta)\in\mathbb{R}_{+}\times[0,2\pi]$ by the following map:
\begin{equation}
(s,\theta)\mapsto e^{-s}e^{i\theta}\in\mathbb{D}.
\end{equation}
Let $X$ be the centered log-correlated Gaussian field on $\mathbb{D}$ of covariance kernel
\begin{equation}
K_\mathbb{D}(x,y)\coloneqq\mathbb{E}[X(x)X(y)]=-\ln|x-y|.
\end{equation}
Then following the radial decomposition of GFF (Lemma~\ref{lem:Radial}) one can decompose $X$ into two independent Gaussian components:
\begin{itemize}
	\item The radial part with can be expressed in terms of a time-changed Brownian motion \begin{equation}B_t=X_{e^{-t}(0)};\end{equation}
	\item The lateral noise part $Y(s,\theta)$ which has covariance kernel \begin{equation}\mathbb{E}[Y(s,\theta)Y(t,\theta')]=\ln\frac{e^{-s}\vee e^{-t}}{|e^{-s}e^{i\theta}-e^{-t}e^{i\theta'}|}.\end{equation} We also use the notation $Z_s$ for the GMC measure associated with the lateral noise $Y$.
\end{itemize}

The local regularized Liouville correlation function with parameter $\alpha+i\beta\in\mathbb{C}$ is defined as
\begin{equation}
G(\alpha+i\beta;T)\coloneqq\mathbb{E}\left[e^{i\beta B_{T}+\frac{\beta^2}{2}T}e^{-\mu\int_{0}^{T}e^{\gamma(B_r-(Q-\alpha)r)}Z_r dr}\right]
\end{equation}
where $T$ denotes some (possibly random) positive time, as long as this expectation can be well defined (e.g. finite).
\end{defi}

\subsection{Convergence and analyticity: martingale method}
In this section, we consider the local regularized Liouville correlation function
\begin{equation}
G(\alpha+i\beta;t)=\mathbb{E}\left[e^{i\beta B_t+\frac{\beta^2}{2}t}e^{-\mu\int_{0}^{t}e^{\gamma(B_r-(Q-\alpha)r)}Z_r dr}\right]
\end{equation}
with fixed deterministic time $t\geq 0$. It is the same regularization as in \cite[Section~4]{kupiainen2017integrability} and it is readily seen that $G$ defines an entire function in $\alpha+i\beta$ for every fixed $t$. We thus study its convergence at $t$ goes to infinity.

As announced before we focus on the region \begin{equation}\mathcal{R}^{M}_{loc}=\mathcal{R}_{loc}\cap\{\alpha>Q-\gamma\}.\end{equation}
\begin{theo}[Local version of the main theorem with fixed time]\label{th:LocalMartingale}
Consider the function
\begin{equation}
G(\alpha+i\beta;t)=\mathbb{E}\left[e^{i\beta B_t+\frac{\beta^2}{2}t}e^{-\mu\int_{0}^{t}e^{\gamma(B_r-(Q-\alpha)r)}Z_r dr}\right].
\end{equation}
We claim that:
\begin{enumerate}
	\item For every $\alpha+i\beta\in\mathbb{C}$, $G(\alpha+i\beta;t)$ is well defined and analytic in $(\alpha,\beta)$ for every finite $t\geq 0$;
	\item For fixed $\alpha+i\beta\in\mathcal{R}^{M}_{loc}$, the limit $G^M(\alpha+i\beta)$ of $G(\alpha+i\beta;t)$ as $t\to\infty$ is well-defined;
	\item The limit $G^M(\alpha+i\beta)$ as a function of $\alpha+i\beta\in\mathbb{C}$ is analytic in $\mathcal{R}^{M}_{loc}$.
\end{enumerate}
\end{theo}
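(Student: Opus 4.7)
The plan has four steps. First, for part (1), observe that $|M_t| := |e^{i\beta B_t + \beta^2 t/2}| = e^{\beta^2 t/2}$ while $A_t := e^{-\mu \int_0^t e^{\gamma(B_r - (Q-\alpha)r)} Z_r dr}$ is a real number in $(0,1]$ (as $\mu > 0$ and the integrand is nonnegative). Hence the integrand defining $G(\alpha+i\beta;t)$ is dominated by $e^{\beta^2 t/2}$, an integrable random variable. Analyticity in $\alpha+i\beta$ for fixed $t$ follows by differentiating under the expectation, justified by dominated convergence (each derivative only introduces polynomial-in-$t$ factors in the dominant).

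For parts (2) and (3), the strategy is to prove that $\{G(\,\cdot\,;t)\}_t$ is Cauchy as $t\to\infty$, locally uniformly in $\mathcal{R}^M_{loc}$; analyticity of the limit $G^M$ on $\mathcal{R}^M_{loc}$ then follows because a locally uniform limit of entire functions on a domain is analytic. Setting $N_t = M_t A_t$ and using $d(e^{i\beta B_t + \beta^2 t/2}) = i\beta \, e^{i\beta B_t + \beta^2 t/2}\, dB_t$ together with the bounded variation of $A_t$, It\^o's formula followed by taking expectations yields the integral identity
\begin{equation*}
G(\alpha+i\beta;t) = 1 - \mu \int_0^t \mathbb{E}\bigl[N_r\, e^{\gamma(B_r - (Q-\alpha)r)} Z_r\bigr]\, dr
\end{equation*}
(the stochastic integral being a true martingale by the $L^2$-bound $\mathbb{E}[\int_0^t |N_s|^2 ds] \leq \int_0^t e^{\beta^2 s} ds < \infty$). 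It thus suffices to show that the integrand is absolutely integrable on $[0,\infty)$ with locally uniform bound.

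To bound the integrand, apply Girsanov (Lemma~\ref{lem:Girsanov}) to the real exponential martingale $e^{\gamma B_r - \gamma^2 r/2}$: this extracts a deterministic factor $e^{(\gamma^2/2 - \gamma(Q-\alpha))r}$ and shifts $B_\cdot \to B_\cdot + \gamma\cdot$. Under this shift $M_r$ picks up only a unit-modulus phase $e^{i\beta\gamma r}$ (so $|M_r|=e^{\beta^2 r/2}$ is preserved), while $A_r$ is replaced by
\begin{equation*}
A_r^{(\alpha+\gamma)} := \exp\!\Bigl(-\mu \int_0^r e^{\gamma(B_s - (Q-\alpha-\gamma)s)} Z_s\, ds\Bigr) \in (0,1].
\end{equation*}
Taking absolute values yields
\begin{equation*}
\bigl|\mathbb{E}\bigl[N_r\, e^{\gamma(B_r - (Q-\alpha)r)} Z_r\bigr]\bigr| \leq e^{\bigl(\frac{\beta^2+\gamma^2}{2} - \gamma(Q-\alpha)\bigr) r}\, \mathbb{E}\bigl[A_r^{(\alpha+\gamma)} Z_r\bigr].
\end{equation*}
Since $\alpha > Q - \gamma$, the parameter $\alpha+\gamma$ exceeds the Seiberg threshold $Q$; via the radial identification of Section~\ref{sec:Chaos} the exponent of $A_r^{(\alpha+\gamma)}$ reads $-\mu \int_{\mathbb{D}\setminus B(0,e^{-r})} |x|^{-\gamma(\alpha+\gamma)} M_\gamma(d^2x)$, and the freezing estimate (Lemma~\ref{lem:Freezing} or the appendix variant) delivers $\mathbb{E}[A_r^{(\alpha+\gamma)} Z_r] \leq C\, e^{-(\alpha+\gamma-Q)^2 r/2}$ with $C$ locally uniform in $\alpha$. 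Expanding $(\alpha+\gamma-Q)^2 = (Q-\alpha)^2 - 2\gamma(Q-\alpha) + \gamma^2$ shows that the combined exponential rate telescopes exactly to $\tfrac{1}{2}(\beta^2 - (Q-\alpha)^2)$, which is strictly negative precisely when $|\beta| < Q - \alpha$, i.e.\ on $\mathcal{R}_{loc}$, and the bound is locally uniform on $\mathcal{R}^M_{loc}$.

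The main obstacle I foresee is the sharp treatment of the $Z_r$ factor inside $\mathbb{E}[A_r^{(\alpha+\gamma)} Z_r]$: since $Z_r$ is correlated (through the lateral noise $Y$) with the $Z_s$, $s<r$, appearing in the exponent of $A_r^{(\alpha+\gamma)}$, a crude Cauchy--Schwarz would either blow up the constant $C$ uniformly in $r$ (for $\gamma$ close to $2$) or degrade the critical exponent $(\alpha+\gamma-Q)^2/2$. The clean fix is to first write $Z_r = \frac{1}{2\pi}\int_0^{2\pi} e^{\gamma Y(r,\theta)-\frac{\gamma^2}{2}\mathbb{E}[Y(r,\theta)^2]}\,d\theta$ and apply Girsanov to one such factor before averaging over $\theta$; this adds an extra $\gamma$-insertion at scale $e^{-r}$ on top of the $(\alpha+\gamma)$-insertion at the origin, and the reinforced integral is controlled exactly by the appendix variant of the freezing estimate (advertised precisely for this purpose), preserving the sharp decay rate and yielding the desired locally uniform Cauchy bound.
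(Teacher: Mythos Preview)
Your proposal is correct and follows essentially the same route as the paper. The paper also differentiates $G$ in $t$ via It\^o, applies Girsanov to absorb the factor $e^{\gamma X(e^{-t}e^{i\theta})}$ (producing the extra $\gamma$-insertion at $e^{-t}e^{i\theta}$ alongside the $\alpha$-insertion at $0$), and then invokes the generalized freezing estimate of Appendix~\ref{app:freezing} with total weight $\alpha+\gamma>Q$ to obtain the exponential rate $\tfrac{1}{2}(\beta^2-(Q-\alpha)^2)$; the only cosmetic difference is that the paper performs the radial and lateral Girsanov shifts in one stroke (working in disk coordinates, Equation~\eqref{eq:MartingaleIto}) rather than in two separate steps as you do, and your anticipated ``obstacle'' with the $Z_r$ factor is resolved there exactly as you suggest.
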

Since for real parameter $\alpha$ this regularization is exactly the original regularization of Liouville correlation function using path integral formalism as defined in \cite{david2016liouville}, this theorem proves that the limit function
\begin{equation}
G^M(\alpha+i\beta)
\end{equation}
is the analytic continuation of the local Liouville correlation function from $\alpha\in\mathbb{R}$ to $\alpha+i\beta\in\mathcal{R}^{M}_{loc}$.

\subsection{Convergence and analyticity: stopping time method}
In this section, we consider the local regularized Liouville correlation function
\begin{equation}
G(\alpha+i\beta;T_N)=\mathbb{E}\left[e^{i\beta B_{T_N}+\frac{\beta^2}{2}T_N}e^{-\mu\int_{0}^{T_N}e^{\gamma(B_r-(Q-\alpha)r)}Z_r dr}\right]
\end{equation}
for $N\in\mathbb{N}$, where $T_N$ is defined as the stopping time for the drifted Brownian motion at level $-N$:
\begin{equation}
T_N=\inf\{t; B_t-(Q-\alpha)t=-N\}.
\end{equation}
We study its convergence as $N$ goes to infinity.

As announced before we focus on the region \begin{equation}\mathcal{R}^{T}_{loc}=\mathcal{R}_{loc}\cap\{\alpha<Q-\frac{\gamma}{2}\}.\end{equation}
\begin{theo}[Local version of the main theorem with stopping time]\label{th:LocalStoppingTime}
Consider the function
\begin{equation}
G(\alpha+i\beta;T_N)=\mathbb{E}\left[e^{i\beta B_{T_N}+\frac{\beta^2}{2}T_N}e^{-\mu\int_{0}^{T_N}e^{\gamma(B_r-(Q-\alpha)r)}Z_r dr}\right].
\end{equation}
We claim that:
\begin{enumerate}
	\item If $|\beta|<Q-\alpha$, then $G(\alpha+i\beta;T_N)$ is well defined for every $N\in\mathbb{N}$;
	\item For fixed $\alpha+i\beta\in\mathcal{R}^{T}_{loc}$, $|G(\alpha+i\beta;T_N)|<C$ for some finite constant $C=C_{\alpha+i\beta}$ independent of $N$ and locally uniform in $\alpha+i\beta$;
	\item For fixed $\alpha+i\beta\in\mathcal{R}^{T}_{loc}$, the limit $G^T(\alpha+i\beta)$ of $G(\alpha+i\beta;T_N)$ as $N\to\infty$ is well-defined;
	\item The limit $G^T(\alpha+i\beta)$ as a function of $\alpha+i\beta\in\mathbb{C}$ is analytic in $\mathcal{R}^{T}_{loc}$.
\end{enumerate}
\end{theo}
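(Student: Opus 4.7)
The plan is to establish the four claims in sequence, leveraging the strong Markov / renewal structure at the stopping times $T_n$ together with a block-by-block expansion for the GMC factor.

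For Claim (1), since $|e^{i\beta B_{T_N}}|=1$ and the negative exponential has modulus at most one,
\begin{equation*}
|G(\alpha+i\beta;T_N)|\leq \mathbb{E}[e^{\beta^2 T_N/2}]=\bigl(\mathbb{E}[e^{\beta^2 T_1/2}]\bigr)^{N},
\end{equation*}
which is finite whenever $|\beta|<Q-\alpha$ by Proposition~\ref{prop:renewalprop}; this gives well-definedness.

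Claim (2) is the delicate step. Using the identity $B_{T_n}=-n+(Q-\alpha)T_n$, we factorise
\begin{equation*}
e^{i\beta B_{T_N}+\tfrac{\beta^2}{2}T_N}=e^{-iN\beta}\prod_{n=1}^{N}\exp\!\Bigl(\bigl(i\beta(Q-\alpha)+\tfrac{\beta^2}{2}\bigr)(T_n-T_{n-1})\Bigr),
\end{equation*}
and an explicit computation with the inverse Gaussian density from Proposition~\ref{prop:renewalprop}(1) yields the exact cancellation $\mathbb{E}[\exp((i\beta(Q-\alpha)+\beta^2/2)T_1)]=e^{i\beta}$. I would then condition on the lateral noise $Y$ and apply the strong Markov property at each $T_n$ to write
\begin{equation*}
G(\alpha+i\beta;T_N)=e^{-iN\beta}\,\mathbb{E}_Y\Bigl[\prod_{n=1}^{N}\phi_n(Y)\Bigr],
\end{equation*}
where $\phi_n(Y)$ encodes the $n$-th block and, without the GMC factor, would equal $e^{i\beta}$ exactly. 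The GMC contribution on the $n$-th block takes the form $\exp(-\mu e^{-\gamma(n-1)}J_n)$, with the geometric prefactor coming from $B_{T_{n-1}}-(Q-\alpha)T_{n-1}=-(n-1)$. Taylor expanding in this small parameter and controlling the remainder via moment bounds on $J_n$ (after a Girsanov shift on $B$; this is where the threshold $\alpha<Q-\gamma/2$ enters, providing the requisite integrability window, related in spirit to the freezing estimate of Lemma~\ref{lem:Freezing}) gives $\phi_n(Y)=e^{i\beta}(1+\epsilon_n(Y))$ with $\sum_n|\epsilon_n(Y)|$ controlled uniformly. Multiplying, $\prod_n\phi_n(Y)$ stays bounded locally uniformly in $\alpha+i\beta$, yielding Claim (2). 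Claim (3) is then a Cauchy-sequence consequence: the increment $G(\alpha+i\beta;T_{N+1})-G(\alpha+i\beta;T_N)$ is controlled by the tail error $\epsilon_{N+1}$, so the sequence converges.

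For Claim (4), each $G(\alpha+i\beta;T_N)$ is holomorphic in the complex combination $\alpha+i\beta$ by differentiation under the expectation (the dependence of $T_N$ and of the integrand on $\alpha$ being smooth), and analyticity of the limit $G^T$ then follows from the locally uniform convergence by Weierstrass's theorem. The \emph{main obstacle} is squarely in Claim (2): the modulus of the prefactor $e^{i\beta B_{T_N}+\beta^2T_N/2}$ already grows exponentially in $N$ on the full pencil $|\beta|<Q-\alpha$, so any modulus-only bound fails. One must exploit the phase cancellation encoded in the Laplace identity, propagated uniformly over the lateral noise $Y$, and this is exactly where the strict threshold $\alpha<Q-\gamma/2$ is needed to control the moments entering the Taylor remainder on the $n$-th block.
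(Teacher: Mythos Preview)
Your treatment of Claim~(1) is fine, and the block Laplace identity $\mathbb{E}[e^{(i\beta(Q-\alpha)+\beta^2/2)T_1}]=e^{i\beta}$ is a nice observation. But two genuine gaps remain.

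\textbf{Claim (2): the conditional factorisation does not hold.} After conditioning on the lateral noise $Y$, the $n$-th block involves $Z_{T_{n-1}+r}$ for $r\in[0,T_n-T_{n-1}]$; since $T_{n-1}$ is a function of the earlier Brownian blocks, the conditional value of the $n$-th factor is a function of $Y$ \emph{and} of $T_{n-1}$, not of $Y$ alone. Strong Markov at $T_{n-1}$ therefore does not yield a product $\prod_n\phi_n(Y)$ of $Y$-measurable factors, and the identity $G=e^{-iN\beta}\mathbb{E}_Y[\prod_n\phi_n(Y)]$ is not valid as written. The paper confronts exactly this coupling through a different mechanism: it splits each block according to whether $M_i$ exceeds the threshold $e^{\gamma i/2}$ (Proposition~\ref{prop:SmallIndicator}), introduces \emph{cutting annuli} to isolate well-separated large-indicator blocks, and then applies a Kahane--Slepian comparison (Lemma~\ref{lem:KahaneDecorrelation}) to bound the product of the correlated large indicators using only \emph{first} moments of the $M_i$. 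This last point is what reaches the full pencil: a direct H\"older split of $\mathbb{E}[e^{\beta^2 T_1/2}J_n]$ would require simultaneously $p\beta^2<(Q-\alpha)^2$ and $J_n\in L^{p/(p-1)}$, and since $J_n$ has moments only up to $p_c(\alpha)<\infty$ this yields merely the strict sub-region $\beta^2<(Q-\alpha)^2\bigl(1-1/p_c(\alpha)\bigr)$, not all of $\mathcal{R}^T_{loc}$.

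\textbf{Claim (4): $G(\cdot;T_N)$ is not known to be holomorphic.} The stopping time $T_N$ is defined through the drift $-(Q-\alpha)$ and hence depends only on $\alpha=\Re(\alpha+i\beta)$; smooth dependence of $T_N$ on the real parameter $\alpha$ does not give complex-differentiability in $\alpha+i\beta$. The paper flags this explicitly (paragraph after the theorem) and circumvents it via the auxiliary function
\[
\mathcal{G}(\alpha+i\beta;T_N)=\mathbb{E}\Bigl[e^{(\alpha+i\beta)B_{T_N}-\tfrac{(\alpha+i\beta)^2}{2}T_N}\,e^{-\mu\int_0^\infty e^{\gamma(B_r-Qr)}Z_r\,dr}\Bigr],
\]
showing in Lemma~\ref{lem:ModifiedG} that $|\mathcal{G}-G|\le Ce^{-\eta N}$ and that $\mathcal{G}(\cdot;T_N)$ converges locally uniformly; analyticity of $G^T$ is then inherited from the $\mathcal{G}$'s, not from the $G$'s.
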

It is not readily seen that $G(\alpha+i\beta;T_N)$ is analytic in $\alpha+i\beta\in\mathbb{C}$ for fixed $N$, thus in the proof of the above theorem, it shall suffer from some minor modification which disappears in the limit. Lastly, in the region with non-trivial interior $\mathcal{R}^{M}_{loc}\cap\mathcal{R}^{T}_{loc}$ the two functions $G^{T}(\alpha+i\beta)$ and $G^{M}(\alpha+i\beta)$ coincide (since one is the sequential limit of the other), so $G^{T}(\alpha+i\beta)$ is the correct analytic continuation of the local Liouville correlation function from $\mathcal{R}^{M}_{loc}$ to $\mathcal{R}_{loc}=\mathcal{R}^{M}_{loc}\cup\mathcal{R}^{T}_{loc}$.

The last observation defines the analytic continuation of local Liouville correlation function $G(\alpha)$ for $\alpha<Q$ to $G(\alpha+i\beta)$ with $\alpha+i\beta\in\mathcal{R}_{loc}$, i.e. $\{|\beta|<Q-\alpha\}$, where
\begin{itemize}
	\item $G(\alpha+i\beta)\coloneqq G^{M}(\alpha+i\beta)$ when $\alpha+i\beta\in\mathcal{R}^{M}_{loc}$;
	\item $G(\alpha+i\beta)\coloneqq G^{T}(\alpha+i\beta)$ when $\alpha+i\beta\in\mathcal{R}^{T}_{loc}$.
\end{itemize}

\section{Local study I: martingale method}\label{sec:MartingaleLocal}
We follow notations from Section~\ref{sec:LocalSetup}. In this section we focus on the region
\begin{equation}
\mathcal{R}^{M}_{loc}=\mathcal{R}_{loc}\cap\{\alpha>Q-\gamma\}.
\end{equation}
The main property that we need in this regime is the following:
\begin{prop}[Property: martingale region]\label{prop:MartingaleLocal}
One has the following upper bound on the first derivative of $G(\alpha+i\beta;t)$ with respect to $t$:
\begin{equation}
\left|\frac{\partial G(\alpha+i\beta;t)}{\partial t}\right|\leq Ce^{-\frac{(Q-\alpha)^2}{2}t+\frac{\beta^2}{2}t}
\end{equation}
where $C$ is some finite constant locally uniform in $(\alpha,\beta)$ independent of $t$ when $t\geq 1$.
\end{prop}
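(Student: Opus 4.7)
The plan is to differentiate $G(\alpha+i\beta;t)$ in $t$ via Itô's formula, bound in modulus (which extracts the factor $e^{\beta^{2}t/2}$), and then apply a real Girsanov shift to reduce the remaining positive expectation to an object controlled by the freezing estimate of Lemma~\ref{lem:Freezing}. The algebra of the resulting exponential prefactors then collapses exactly to the claimed rate.

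To compute the derivative, set $A_t\coloneqq e^{i\beta B_t+\beta^{2}t/2}$ and $C_t\coloneqq e^{-\mu\int_{0}^{t}M_r\,dr}$ with $M_r\coloneqq e^{\gamma(B_r-(Q-\alpha)r)}Z_r$. Since $A_t$ is a complex Itô exponential, $dA_t=i\beta A_t\,dB_t$ has no finite variation part, while $dC_t=-\mu M_t C_t\,dt$ has no martingale part. The product rule gives $d(A_tC_t)=-\mu A_t C_t M_t\,dt+i\beta A_t C_t\,dB_t$, and taking expectation yields
\[
\partial_t G(\alpha+i\beta;t)=-\mu\,\mathbb{E}[A_t C_t M_t].
\]
As $|A_t|=e^{\beta^{2}t/2}$ and $C_t M_t\geq 0$, bounding in modulus gives $|\partial_t G|\leq\mu\,e^{\beta^{2}t/2}\,\mathbb{E}[C_t M_t]$, so it suffices to prove $\mathbb{E}[C_t M_t]\leq C\,e^{-(Q-\alpha)^{2}t/2}$ locally uniformly in $\alpha$.

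For this I expand $Z_t=\tfrac{1}{2\pi}\int_{0}^{2\pi}e^{\gamma Y(t,\theta)-\tfrac{\gamma^{2}}{2}\mathbb{E}[Y(t,\theta)^{2}]}d\theta$ by Fubini, and for each fixed $\theta$ view the Gaussian factor $e^{\gamma B_t-\gamma(Q-\alpha)t+\gamma Y(t,\theta)-\tfrac{\gamma^{2}}{2}\mathbb{E}[Y(t,\theta)^{2}]}$ as the Girsanov exponential of $\gamma(B_t+Y(t,\theta))$ at the numerical cost of the prefactor $e^{-\gamma(Q-\alpha-\gamma/2)t}$. By Lemma~\ref{lem:Girsanov}, the associated shift sends $B_r\to B_r+\gamma(r\wedge t)$ and $Y(s,\theta')\to Y(s,\theta')+\gamma\ln\tfrac{e^{-s}}{|e^{-s}e^{i\theta'}-e^{-t}e^{i\theta}|}$ for $s\leq t$. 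Under this shift the running integrand $M_r$ becomes $e^{\gamma(B_r-(Q-\alpha-\gamma)r)}\,\tilde{Z}_r^{(\theta)}$, where $\tilde{Z}_r^{(\theta)}$ is the lateral chaos reweighted by the deterministic factor $(e^{-r}/|e^{-r}e^{i\theta'}-e^{-t}e^{i\theta}|)^{\gamma^{2}}$. Read in physical space, $\int_{0}^{t}\tilde{M}_r^{(\theta)}dr$ is proportional to the GMC mass of $|x|^{-\gamma(\alpha+\gamma)}\bigl(|x|/|x-e^{-t}e^{i\theta}|\bigr)^{\gamma^{2}}$ over the annulus $\{e^{-t}<|x|<1\}$.

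In $\mathcal{R}^{M}_{loc}$ one has $\alpha+\gamma>Q$, so the singularity at the origin is in the freezing regime of Lemma~\ref{lem:Freezing}, while the extra singularity at $e^{-t}e^{i\theta}$ on the inner boundary has effective weight $\gamma<Q$ and is subcritical. The generalized freezing estimate of Appendix~\ref{app:freezing} (designed precisely to absorb such a subcritical boundary insertion) then delivers, uniformly in $\theta\in[0,2\pi]$ and locally uniformly in $\alpha$,
\[
\mathbb{E}\Bigl[\exp\bigl(-\mu\textstyle\int_{0}^{t}e^{\gamma(B_r-(Q-\alpha-\gamma)r)}\,\tilde{Z}_r^{(\theta)}\,dr\bigr)\Bigr]\leq C\,e^{-\tfrac{(\alpha+\gamma-Q)^{2}}{2}t}.
\]
Combining with the Girsanov prefactor $e^{-\gamma(Q-\alpha-\gamma/2)t}$ and the $e^{\beta^{2}t/2}$ from the complex phase, the elementary identity $-\gamma(Q-\alpha-\gamma/2)-\tfrac{1}{2}(\alpha+\gamma-Q)^{2}=-\tfrac{1}{2}(Q-\alpha)^{2}$ produces exactly the stated bound. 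The main technical obstacle is this last appeal to the appendix's variant: Lemma~\ref{lem:Freezing} as stated covers only the singularity at the origin, so one must verify that the subcritical log-singularity at $e^{-t}e^{i\theta}$ introduced by Girsanov does not degrade the exponential rate $e^{-(\alpha+\gamma-Q)^{2}t/2}$ and can be absorbed into a $\theta$-uniform, $\alpha$-locally-uniform constant. Once that variant is granted, the remainder of the argument reduces to the two-line algebra of exponents above.
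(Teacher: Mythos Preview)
Your proof is correct and essentially identical to the paper's: both differentiate via It\^o, apply the real Girsanov shift along $\gamma(B_t+Y(t,\theta))$, and then invoke the generalized freezing estimate (Lemma~\ref{lem:GeneralFreezing}) with the two insertions $\alpha$ at $0$ and $\gamma$ at $e^{-t}e^{i\theta}$, total weight $\alpha+\gamma>Q$, to obtain the rate $e^{-(\alpha+\gamma-Q)^2t/2}$. Your closing caveat is unnecessary---Lemma~\ref{lem:GeneralFreezing} is formulated precisely to handle several insertions inside $B(0,\epsilon)$ simultaneously (there is no separate ``subcritical boundary'' issue to verify), and the only cosmetic difference from the paper is that it performs Girsanov before bounding the complex phase, which is equivalent since the extra factor $e^{i\gamma\beta t}$ thus produced has unit modulus.
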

\begin{proof}
This is essentially a consequence of the generalized freezing estimate, Appendix~\ref{app:freezing}. It is more convenient to work with the geometry of $\mathbb{D}$, so let us use the following equivalent representation for $G(\alpha+i\beta;t)$ (see Definition~\ref{def:LocalCorrelation} for the conformal change of domain):
\begin{equation}
G(\alpha+i\beta;t)=\mathbb{E}\left[e^{i\beta B_t+\frac{\beta^2}{2}t}e^{-\mu\int_{\mathbb{D}\backslash B(0,e^{-t})}\frac{1}{|x|^{\gamma\alpha}}M_\gamma(d^2x)}\right].
\end{equation}

By Itô calculus on the Brownian motion $B_t$ one can write
\begin{equation}\label{eq:MartingaleIto}
\frac{\partial G(\alpha+i\beta;t)}{\partial t}=\frac{1}{2\pi}\int_{0}^{2\pi}-\mu e^{(\gamma\alpha-2)t}e^{i\gamma\beta t}\mathbb{E}\left[e^{i\beta B_t+\frac{\beta^2}{2}t}e^{-\mu\int_{\mathbb{D}\backslash B(0,e^{-t})}\frac{1}{|x-e^{-t}e^{i\theta}|^{\gamma^2}}\frac{M_\gamma(d^2x)}{|x|^{\gamma\alpha}}}\right]d\theta
\end{equation}
where we recognize a GMC measure with two (real) insertions of respective parameters $\alpha$ and $\gamma$ all contained in $B(0,e^{-t})$. Since by assumption $\alpha+\gamma>Q$, applying Lemma~\ref{lem:GeneralFreezing} yields
\begin{equation}
\left|\frac{\partial G(\alpha+i\beta;t)}{\partial t}\right|\leq\mu Ce^{(\gamma\alpha-2)t}e^{-\frac{(\alpha+\gamma-Q)^2}{2}t}e^{\frac{\beta^2}{2}t}\leq Ce^{-\frac{(Q-\alpha)^2}{2}t+\frac{\beta^2}{2}t}
\end{equation}
with $C$ finite constant locally uniform in $(\alpha,\beta)$, independent of $t$ when $t\geq 1$.
\end{proof}

\subsection{Convergence and analyticity of the limit}
With Proposition~\ref{prop:MartingaleLocal} we are ready to give analytic continuation of local Liouville correlation function in the martinagle region.
\begin{proof}[Proof of Theorem~\ref{th:LocalMartingale}]
It is readily seen in \cite[Section~4]{kupiainen2017integrability} that for every fixed $t$, $G(\alpha+i\beta;t)$ is well defined and analytic in $(\alpha,\beta)$ since it is complex differentiable in $\alpha+i\beta$.

To prove claim (2), notice that if $|\beta|<Q-\alpha$ and $\alpha>Q-\gamma$, then using Proposition~\ref{prop:MartingaleLocal},
\begin{equation}
\left|\frac{\partial G(\alpha+i\beta;t)}{\partial t}\right|
\end{equation}
decays exponentially as $t$ goes to infinity, thus proving the convergence.

To prove claim (3), notice that the exponential convergence rate above is locally uniform in $(\alpha,\beta)$. Since for every $t$, $G(\alpha+i\beta;t)$ is analytic in $(\alpha,\beta)$, local uniform convergence of analytic functions yields analyticity of the limit. As the limit is the same when $\beta=0$, the limit thus defined is indeed the only analytic continuation of local Liouville correlation function from real parameter $\alpha$ to the region
\begin{equation}
\mathcal{R}^{M}_{loc}=\mathcal{R}_{loc}\cap\{\alpha>Q-\gamma\}.
\end{equation}
\end{proof}
\begin{rema}[Extention of martingale method]
One can ask if we can define probabilistically the analytic continuation of local Liouville correlation function to $\mathcal{R}_{loc}$ only using this martingale method. We make several comments on this question: it is possible but the actual proof is much more involved than the current article. In fact, one can look at higher derivatives of the function $G(\alpha+i\beta;t)$ in $\mathcal{R}_{loc}$ but they are in general ill-defined. To renormalize this explosion phenomenon one has to analyse carefully the situation and do a precise Taylor expansion. Getting rid of the first terms in this Taylor expansion appropriately will allow us to push the analytic continuation beyond the region $\mathcal{R}^{M}_{loc}$ and eventually cover the whole region $\mathcal{R}_{loc}$ recursively. However obtaining this Taylor expansion is technically much more involved and will be investigated seperately.
\end{rema}

\section{Local study II: stopping time method}\label{sec:StoppingLocal}
We follow notations from Section~\ref{sec:LocalSetup}. In this section we focus on the region
\begin{equation}
\mathcal{R}^{T}_{loc}=\mathcal{R}_{loc}\cap\{\alpha<Q-\frac{\gamma}{2}\}.
\end{equation}
The main property that we need in this regime is the following:
\begin{prop}[Property: stopping time region]\label{prop:StoppingLocal}
If $\alpha<Q-\frac{\gamma}{2}$, then the positive random measure
\begin{equation}
M_{1}=\int_{0}^{T_1}e^{\gamma(B_r-(Q-\alpha)r)}Z_rdr
\end{equation}
is bounded in $L^1$, i.e.
\begin{equation}
\mathbb{E}\left[\int_{0}^{T_1}e^{\gamma(B_r-(Q-\alpha)r)}Z_rdr\right]<\infty.
\end{equation}
\end{prop}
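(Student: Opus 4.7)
The plan is to establish the bound via a direct computation: apply Tonelli to push the expectation inside the time integral, exploit the independence between the radial Brownian motion $B$ and the lateral noise $Y$ (hence $Z$), and then observe that the assumption $\alpha < Q - \gamma/2$ is exactly the condition making the resulting exponential integrable on $[0,\infty)$, regardless of the stopping time $T_1$.

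More concretely, since the integrand is non-negative, Tonelli gives
\begin{equation*}
\mathbb{E}[M_1] = \int_{0}^{\infty} \mathbb{E}\bigl[\mathbf{1}_{\{r<T_1\}}\, e^{\gamma(B_r - (Q-\alpha)r)} Z_r\bigr]\, dr.
\end{equation*}
The stopping time $T_1$ is measurable with respect to $B$, while $Z_r$ depends only on the lateral noise $Y$, which is independent of $B$ by the radial decomposition (Lemma~\ref{lem:Radial}). Since $\mathbb{E}[Z_r] = 1$ by the normalization of GMC (Fubini combined with $\mathbb{E}[e^{\gamma Y(r,\theta) - \frac{\gamma^2}{2}\mathbb{E}[Y(r,\theta)^2]}] = 1$), factoring out gives
\begin{equation*}
\mathbb{E}[M_1] = \int_{0}^{\infty} \mathbb{E}\bigl[\mathbf{1}_{\{r<T_1\}}\, e^{\gamma(B_r - (Q-\alpha)r)}\bigr]\, dr.
\end{equation*}

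Next I would simply drop the indicator using $\mathbf{1}_{\{r<T_1\}} \leq 1$ and compute the resulting Gaussian exponential moment:
\begin{equation*}
\mathbb{E}[M_1] \leq \int_0^\infty \mathbb{E}\bigl[e^{\gamma(B_r-(Q-\alpha)r)}\bigr]\, dr = \int_0^\infty e^{-\gamma(Q-\alpha-\frac{\gamma}{2})r}\, dr = \frac{1}{\gamma(Q-\alpha-\frac{\gamma}{2})},
\end{equation*}
which is finite precisely under the assumption $\alpha < Q - \gamma/2$ defining $\mathcal{R}^T_{loc}$.

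There is essentially no obstacle: the only thing to be careful about is that the apparent role of the stopping time $T_1$ in the statement is a distraction — the bound holds purely from the mean behavior of the exponential martingale, with $T_1$ only helping (it cannot increase the integral). The one place needing a word of justification is the use of independence between $B$ and $Y$ together with $\mathbb{E}[Z_r]=1$, which is standard for the GMC associated to the lateral noise and already implicit in the setup of Section~\ref{sec:LocalSetup}. The borderline case $\alpha = Q - \gamma/2$ is naturally excluded, consistent with the strict inequality defining the stopping-time region.
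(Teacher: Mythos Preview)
Your proof is correct and in fact more elementary than the paper's. Both arguments begin identically by bounding $M_1$ by the integral over $[0,\infty)$ (the paper does this by positivity before the expectation, you do it after Tonelli by dropping the indicator). The paper then maps the resulting integral conformally back to the disk, recognizes it as $\int_{B(0,1)}|z|^{-\gamma\alpha}M_\gamma(dz)$, and invokes the generalized Seiberg bound from \cite{david2016liouville} to conclude that this GMC integral has finite moments up to order $p_c(\alpha)=\frac{4}{\gamma^2}\wedge\frac{2}{\gamma}(Q-\alpha)>1$. You instead exploit the independence of $B$ and $Z$ together with $\mathbb{E}[Z_r]=1$ to reduce to the explicit Gaussian integral $\int_0^\infty e^{-\gamma(Q-\alpha-\gamma/2)r}\,dr$. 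Your route is self-contained and avoids the external moment result; the paper's route, while less direct for the $L^1$ claim, implicitly establishes the stronger fact that $M_1$ lies in $L^p$ for all $p<p_c(\alpha)$, which is not needed here but is of independent interest.
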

\begin{proof}
By positivity of the measure we have always the following bound
\begin{equation}
M_{1}\leq\int_{0}^{\infty}e^{\gamma(B_r-(Q-\alpha)r)}Z_rdr
\end{equation}
and when we map the right hand side conformally to the disk, we obtain
\begin{equation}
\int_{B(0,1)}\frac{1}{|z|^{\alpha\gamma}}M_\gamma(dz)
\end{equation}
where $M_\gamma(dz)$ denotes the GMC measure associated to some log-correlated field $X$ in $B(0,2)$.

By the study of generalized Seiberg bound (see \cite[Section~3.4]{david2016liouville}), the last quantity has finite positive moment up to
\begin{equation}
p_c(\alpha)=\frac{4}{\gamma^2}\wedge\frac{2}{\gamma}(Q-\alpha)
\end{equation}
which is strictly bigger than $1$ with our assumption on $\alpha$.
\end{proof}

\subsection{Notations and Brownian motion decorrelation}
Now we proceed towards the study of the function
\begin{equation}
G(\alpha+i\beta;T_N)=\mathbb{E}\left[e^{i\beta B_{T_N}+\frac{\beta^2}{2}T_N}e^{-\mu\int_{0}^{T_N}e^{\gamma(B_r-(Q-\alpha)r)}Z_r dr}\right].
\end{equation}

First we want to use Markov property to cut the time interval $[0,T_N]$ into subintervals with disjoint interiors
\begin{equation}
[0,T_N]=\coprod\limits_{i=0}^{N-1}[T_i,T_{i+1}]
\end{equation}
such that by definition of stopping time for a drifted Brownian motion, we can decorrelate the Brownian motion into independent components on each interval. More precisely, if for each $i$, $B^{i}(s)$ denotes the fluctuation of the Brownian motion $B$ from time $T_i$ until $T_{i+1}$, i.e.
\begin{equation}
B^{i}(s)\coloneqq B(s+T_i)-B(T_i),\quad s\leq T_{i+1}-T_i,
\end{equation}
then if $i\neq j$, the Brownian paths $B^{i}$ and $B^{j}$ are mutually independent.

By Markov property we consequently write $G(\alpha+i\beta;T_N)$ as
\begin{equation}
G(\alpha+i\beta;T_N)=\mathbb{E}\left[\prod_{i=0}^{N-1}\left(e^{i\beta (B_{T_{i+1}}-B_{T_{i}})+\frac{\beta^2}{2}(T_{i+1}-T_{i})}e^{-\mu e^{-\gamma i}\int_{0}^{T_{i+1}-T_{i}}e^{\gamma(B^i_r-(Q-\alpha)r)}Z_{r+T_{i}}dr}\right)\right].
\end{equation}

If we let $M_{i}$ denote the positive random measure
\begin{equation}
M_{i}\coloneqq \int_{0}^{T^{i}_1}e^{\gamma(B^i_r-(Q-\alpha)r)}Z_{r+T_{i}}dr
\end{equation}
with $T^{i}_{1}=T_{i+1}-T_i$, the stopping time associated to the drifted Brownain motion $B^{i}_r-(Q-\alpha)r$ at level $-1$, then the above expression can be condensed into
\begin{equation}
G(\alpha+i\beta;T_N)=\mathbb{E}\left[\prod_{i=0}^{N-1}\left(e^{i\beta (B_{T_{i+1}}-B_{T_{i}})+\frac{\beta^2}{2}(T_{i+1}-T_{i})}e^{-\mu e^{-\gamma i}M_{i}}\right)\right].
\end{equation}
Notice that for every $i$, $M_{i}$ is distributed as $M_{1}$ in Proposition~\ref{prop:StoppingLocal}, but they are correlated (only in the lateral noise part $Z_r$). The correlation between $M_{i},M_{j}$ decays as the corresponding time intervals $[T_{i},T_{i+1}],[T_{j},T_{j+1}]$ separate apart.

\subsection{Preliminary observation}
The first observation we will make is that terms where the measure $M_{i}$ is not too big can be bounded uniformly by some global constant $C$ which is locally uniform in $(\alpha,\beta)$.

More precisely, let $N$ be arbitrary and $J\subset[|0,N-1|]$ any subset of the set
\begin{equation}
[|0,N-1|]\coloneqq \{0,1,\dots,N-1\}
\end{equation}
and consider the function, where the measure $M_i$ is truncated at level $e^{\frac{\gamma i}{2}}$,
\begin{equation}
g^{J}(\alpha+i\beta;T_N)=\mathbb{E}\left[\prod_{i\in J}\left(e^{i\beta (B_{T_{i+1}}-B_{T_{i}})+\frac{\beta^2}{2}(T_{i+1}-T_{i})}e^{-\mu e^{-\gamma i}(M_{i}\wedge e^{\frac{\gamma i}{2}})}\right)\right].
\end{equation}
We claim that
\begin{prop}\label{prop:SmallIndicator}
With the above notation, there exists some constant $C_0$ independent of $N$ and $J$ and locally uniform in $(\alpha,\beta)$ such that the following bound holds:
\begin{equation}
\left|g^{J}(\alpha+i\beta;T_N)\right|\leq C_0.
\end{equation}
Nonetheless the bound $C_0$ depends on $\mu$ and we will give an estimate in a corollary.
\end{prop}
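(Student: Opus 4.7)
The plan is to bound $|g^J|$ by an iterative conditioning argument from the largest index of $J$ downward, combining the strong Markov property at the stopping times $T_i$ with the martingale identity $\mathbb{E}[F_i\mid\mathcal{F}_{T_i}]=1$, where $F_i:=e^{i\beta(B_{T_{i+1}}-B_{T_i})+\frac{\beta^2}{2}(T_{i+1}-T_i)}$. Writing $\Phi_i=F_i G_i$ with $G_i:=e^{-\mu e^{-\gamma i}(M_i\wedge e^{\gamma i/2})}$ and setting $\phi_i(T_i,Y):=\mathbb{E}[\Phi_i\mid\mathcal{F}_{T_i}\vee\sigma(Y)]$, the first task is to show that $|\phi_i-1|\leq \mu c\,e^{-\gamma i/2}$, where $c:=\mathbb{E}[e^{\frac{\beta^2}{2}T_1}]<\infty$ by Proposition~\ref{prop:renewalprop}(i) under the assumption $|\beta|<Q-\alpha$. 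This follows from $\mathbb{E}[F_i\mid\cdot]=1$ (optional stopping of the complex martingale $e^{i\beta B_t+\frac{\beta^2}{2}t}$, justified by the uniform integrability provided by finiteness of $c$) together with the deterministic truncation bound $1-G_i\leq \mu e^{-\gamma i}(M_i\wedge e^{\gamma i/2})\leq \mu e^{-\gamma i/2}$, and the fact that $B^i$ is independent of $(\mathcal{F}_{T_i},Y)$.

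Enumerating $J=\{i_1<\ldots<i_k\}$, I would then iterate: by the tower property and Step~1,
\[
g^J=\mathbb{E}\Bigl[\prod_{l<k}\Phi_{i_l}\cdot\phi_{i_k}(T_{i_k},Y)\Bigr]=g^{J\setminus\{i_k\}}+\mathbb{E}\Bigl[\prod_{l<k}\Phi_{i_l}\cdot\eta_{i_k}(T_{i_k},Y)\Bigr],
\]
with $\eta_{i_k}:=\phi_{i_k}-1$ satisfying $|\eta_{i_k}|\leq\mu c\,e^{-\gamma i_k/2}$. The target is a multiplicative bound $|g^J|\leq (1+\mu c\,e^{-\gamma i_k/2})\,|g^{J\setminus\{i_k\}}|$, which by induction and summability of $\sum e^{-\gamma i/2}$ gives
\[
|g^J|\leq \prod_{l=1}^{k}\bigl(1+\mu c\,e^{-\gamma i_l/2}\bigr)\leq \prod_{i=0}^{\infty}\bigl(1+\mu c\,e^{-\gamma i/2}\bigr)=:C_0<\infty,
\]
which is locally uniform in $(\alpha,\beta)\in\mathcal{R}^T_{loc}$ and independent of $N$ and $J$.

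The main obstacle is establishing that multiplicative step: the naive bound via absolute values gives $|\text{error}|\leq \mu c\,e^{-\gamma i_k/2}\,\mathbb{E}\bigl[|\prod_{l<k}\Phi_{i_l}|\bigr]$, but $\mathbb{E}[|F_{i_l}|]=c>1$ in the regime of interest ($|\beta|$ close to $Q-\alpha$), so this crude estimate grows like $c^{k-1}$ and does not yield a bound uniform in $|J|$. The cancellation that tames this exponential blow-up sits in the complex phase structure of $\prod F_{i_l}$: the modulus $|\prod F_{i_l}|=e^{\frac{\beta^2}{2}\sum(T_{i_l+1}-T_{i_l})}$ can be huge, yet by the iterated martingale identity $\mathbb{E}[\prod_{l\in J}F_{i_l}]=1$. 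To preserve this cancellation through the iteration I expect to use the log-scale stationarity of the lateral noise $Y$ (the covariance kernel~\eqref{eq:NoiseCorr} is invariant under $(s,t)\mapsto(s+\tau,t+\tau)$), which implies that the $Y$-average of $\eta_i(T_i,Y)$ is independent of $T_i$; combined with an appropriate telescoping expansion (or a Girsanov tilt converting the drift $Q-\alpha$ into $Q':=\sqrt{(Q-\alpha)^2-\beta^2}$ so that $|F_i|$ becomes a probability density), this should allow the iteration to close. A quantitative dependence of $C_0$ on $\mu$ will be recorded in a subsequent corollary.
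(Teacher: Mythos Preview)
Your setup is correct and you have isolated the right difficulty, but the fix is simpler than the Girsanov/stationarity machinery you are reaching for. The paper does \emph{not} attempt the multiplicative recursion $|g^J|\leq(1+\mu c\,e^{-\gamma i_k/2})|g^{J\setminus\{i_k\}}|$ at all. Instead it writes $G_i=1-(1-G_i)$ for every $i\in J$ simultaneously and expands the product completely:
\[
g^{J}=\sum_{K\subset J}(-1)^{|K|}\,\mathbb{E}\Bigl[\prod_{i\in J}F_i\;\prod_{i\in K}\bigl(1-G_i\bigr)\Bigr].
\]
For each fixed $K$, the factors $F_i$ with $i\in J\setminus K$ are now \emph{bare} complex martingale increments (no $(1-G_i)$ attached); by the strong Markov property, iterated from the top index of $J$ downward, each such $F_i$ integrates to~$1$ and disappears. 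What remains involves only the indices in $K$, and there the crude bound $|F_i(1-G_i)|\leq e^{\frac{\beta^2}{2}(T_{i+1}-T_i)}\bigl(1-e^{-\mu e^{-\gamma i/2}}\bigr)$ is harmless, giving
\[
\Bigl|\mathbb{E}\Bigl[\prod_{i\in J}F_i\prod_{i\in K}(1-G_i)\Bigr]\Bigr|\leq C^{|K|}\prod_{i\in K}\bigl(1-e^{-\mu e^{-\gamma i/2}}\bigr),
\]
with $C=\mathbb{E}\bigl[e^{\frac{\beta^2}{2}T_1}\bigr]$. Summing over $K\subset J$ recombines into the product $\prod_{i\in J}\bigl(1+C(1-e^{-\mu e^{-\gamma i/2}})\bigr)\leq\prod_{i\geq 0}\bigl(1+C\mu e^{-\gamma i/2}\bigr)=:C_0<\infty$.

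The point is that the blow-up you flagged comes from paying the factor $C$ on \emph{every} index of $J$; the full expansion pays it only on the indices of $K$, and each such payment is coupled to the summable weight $1-e^{-\mu e^{-\gamma i/2}}\leq\mu e^{-\gamma i/2}$. Your telescoping identity $g^J=g^{J\setminus\{i_k\}}+E_k$ is exactly the first step of this expansion; the mistake is to bound $E_k$ in absolute value immediately. If instead you continue expanding each $\Phi_{i_l}=F_{i_l}-F_{i_l}(1-G_{i_l})$ inside $E_k$, you recover the paper's sum over $K$ and the obstacle dissolves without any appeal to stationarity of $Y$ or a Girsanov tilt.
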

\begin{proof}
Write
\begin{equation}
e^{-\mu e^{-\gamma i}(M_{i}\wedge e^{\frac{\gamma i}{2}})}=1-\left(1-e^{-\mu e^{-\gamma i}(M_{i}\wedge e^{\frac{\gamma i}{2}})}\right)
\end{equation}
and expand all the product over subsets $K\subset J$. By Markov property, for any subset $K\subset J$ the corresponding term in the product expansion
\begin{equation}
\mathbb{E}\left[\prod_{i\in J}\left(e^{i\beta (B_{T_{i+1}}-B_{T_{i}})+\frac{\beta^2}{2}(T_{i+1}-T_{i})}\right)\prod_{i\in K}\left(1-e^{-\mu e^{-\gamma i}(M_{i}\wedge e^{\frac{\gamma i}{2}})}\right)\right]
\end{equation}
can be bounded in absolute value by (with $C$ independent of $\mu$)
\begin{equation}
\mathbb{E}\left[\prod_{i\in K}\left(e^{\frac{\beta^2}{2}(T_{i+1}-T_{i})}\left(1-e^{-\mu e^{-\frac{\gamma i}{2}}}\right)\right)\right]=C^{\#(K)}\prod_{i\in K}\left(1-e^{-\mu e^{-\frac{\gamma i}{2}}}\right)
\end{equation}
and summing up we get an upper bound for $g^{J}(\alpha+i\beta;T_N)$ in absolute value:
\begin{equation}\label{eq:SmallIndicatorUpperBound}
\prod_{i\in J}\left(1+C\left(1-e^{-\mu e^{-\frac{\gamma i}{2}}}\right)\right)\leq\prod_{i\in\mathbb{N}}\left(1+C\left(1-e^{-\mu e^{-\frac{\gamma i}{2}}}\right)\right).
\end{equation}
To see that this is bounded independent of $I$, using $1-e^{-x}\leq x$ for $x\geq 0$ and write
\begin{equation}
\prod_{i\in\mathbb{N}}\left(1+C\left(1-e^{-\mu e^{-\frac{\gamma i}{2}}}\right)\right)\leq\prod_{i\in\mathbb{N}}\left(1+C\mu e^{-\frac{\gamma i}{2}}\right)<\infty.
\end{equation}
This finishes the proof.
\end{proof}
We study the upper bound when $\mu$ varies and record the following corollary:
\begin{coro}\label{cor:SmallIndicatorMu}
Let $c>0$ and write $\mu=\mu_0 e^{\gamma c}$ for $\mu_0>0$. The bound $C_0(\mu)$ in Proposition~\ref{prop:SmallIndicator} can be bounded above by
\begin{equation}
(1+C)^{\frac{c+1}{2}} C_0(\mu_0)
\end{equation}
\end{coro}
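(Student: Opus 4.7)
The plan is to revisit the explicit upper bound
\begin{equation*}
C_0(\mu) \;\leq\; \prod_{i\in\mathbb{N}}\Bigl(1+C\bigl(1-e^{-\mu e^{-\gamma i/2}}\bigr)\Bigr)
\end{equation*}
obtained inside the proof of Proposition~\ref{prop:SmallIndicator} (see equation~\eqref{eq:SmallIndicatorUpperBound}) and to compare the product at $\mu = \mu_0 e^{\gamma c}$ with the same product at $\mu_0$ via an index-shift argument. The key algebraic observation is that $\mu e^{-\gamma i/2} = \mu_0 e^{\gamma(c - i/2)}$, so a formal change of the dummy index $i \mapsto i - 2c$ would transform the $\mu$-product into the $\mu_0$-product; the only cost is the extra factors coming from the initial indices where this shift would push $i$ into negative values, and those factors are exactly what the stated $(1+C)^{(c+1)/2}$ should account for.

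Concretely, I would set $k = \lceil 2c\rceil$ (or some comparable integer) and split the infinite product into the first $k$ factors times the tail starting at $i = k$. For each of the first $k$ factors, the trivial estimate $1 - e^{-x} \leq 1$ gives an upper bound of $1+C$, contributing $(1+C)^{k}$ in total. For the tail, I would reindex by $j = i-k$: since $k \geq 2c$, one has the termwise inequality $\mu_0 e^{\gamma(c - (j+k)/2)} \leq \mu_0 e^{-\gamma j/2}$, and since $x \mapsto 1 + C(1-e^{-x})$ is increasing, the tail is bounded above by $\prod_{j=0}^\infty (1 + C(1 - e^{-\mu_0 e^{-\gamma j/2}})) = C_0(\mu_0)$. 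Combining these two estimates already yields a clean bound of the form $C_0(\mu) \leq (1+C)^{\lceil 2c \rceil} C_0(\mu_0)$, i.e.\ of the expected shape $(1+C)^{\mathrm{poly}(c)} C_0(\mu_0)$.

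The main (minor) obstacle is tightening the exponent to the precise value $(c+1)/2$ stated in the corollary: the crude splitting above pessimistically bounds every one of the first $\lceil 2c\rceil$ factors by $1+C$, whereas many of those factors — corresponding to indices $i$ near $2c$ where $\mu_0 e^{\gamma(c-i/2)}$ is of order one rather than large — can be controlled more sharply by $1 - e^{-x} \leq x$ and then summed as a geometric series in $i$. A refined split of the first $k$ indices into a genuinely saturated part (contributing the $(1+C)^{(c+1)/2}$ factor) and a transition zone (contributing only a bounded multiplicative constant that can be absorbed into the definition of $C_0(\mu_0)$, up to adjusting the universal $C$) should deliver the precise exponent. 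This final bookkeeping is the only nontrivial step and requires no new probabilistic ingredient beyond those already used in Proposition~\ref{prop:SmallIndicator}.
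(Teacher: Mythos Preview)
Your approach is essentially identical to the paper's: both start from the product bound \eqref{eq:SmallIndicatorUpperBound}, split off the first $k \approx 2c$ factors (bounded crudely by $(1+C)^k$ via $1-e^{-x}\leq 1$), and recognize the reindexed tail as $C_0(\mu_0)$. Your worry about sharpening the exponent to exactly $(c+1)/2$ is unnecessary --- the paper itself only obtains $(1+C)^{k}=(1+C)^{2c}$ in the integer case and then asserts the stated form for general $c$ without a finer argument, since all that matters downstream (Appendix~\ref{app:NonLocal}) is that the exponent grows at most linearly in $c$.
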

\begin{proof}
Let us restart from the Equation~\eqref{eq:SmallIndicatorUpperBound} and use the slighly better bound
\begin{equation}
1-e^{-x}\leq x\wedge 1
\end{equation}
for $x\geq 0$. Now if $2c=k\in\mathbb{N}$ then
\begin{equation}
\prod_{i\in\mathbb{N}}\left(1+C\left(1-e^{-\mu e^{-\frac{\gamma i}{2}}}\right)\right)=\prod_{i=0}^{k-1}\left(1+C\left(1-e^{-\mu e^{-\frac{\gamma i}{2}}}\right)\right)\prod_{i\in\mathbb{N}}\left(1+C\left(1-e^{-\mu_0 e^{-\frac{\gamma i}{2}}}\right)\right)
\end{equation}
where we recognize the upper bound
\begin{equation}
(1+C)^{k} C_0(\mu_0).
\end{equation}
For general $c$, a similar argument yields the upper bound
\begin{equation}
(1+C)^{\frac{c+1}{2}} C_0(\mu_0)
\end{equation}
as desired.
\end{proof}

Consequently, we will decompose the function $G$ with respect to the subset $J\in[|0,N-1|]$ where the measures $M_i$ are extremely large. Indeed, we can write
\begin{equation}
G(\alpha+i\beta;T_N)=\sum\limits_{J\subset[|0,N-1|]}G^{J}(\alpha+i\beta;T_N)
\end{equation}
where $G(J)$ denotes the configuration where measures with indices in $J$ are extremely large:
\begin{align}
G^{J}(\alpha+i\beta;T_N)\coloneqq\mathbb{E}\bigg[&\prod_{i\in J}\left(e^{i\beta (B_{T_{i+1}}-B_{T_{i}})+\frac{\beta^2}{2}(T_{i+1}-T_{i})}\left(e^{-\mu e^{-\gamma i}M_{i}}-e^{-\mu e^{-\frac{\gamma i}{2}}}\right)\mathbf{1}_{\{M_{i}>e^{\frac{\gamma i}{2}}\}}\right)\\
\times&\prod_{i\in J^c}\left(e^{i\beta (B_{T_{i+1}}-B_{T_{i}})+\frac{\beta^2}{2}(T_{i+1}-T_{i})}e^{-\mu e^{-\gamma i}(M_{i}\wedge e^{\frac{\gamma i}{2}})}\right)\bigg]\nonumber
\end{align}
with abused notation $J^c$ denoting the complement of $J$ in $[|0,N-1|]$.

Finding a proper way to translate the large deviation phenomenon
\begin{equation}
\prod_{i\in J}\mathbf{1}_{\{M_{i}>e^{\frac{\gamma i}{2}}\}}
\end{equation}
for multiple indices simulteneously will be the focus of the following sections.

\subsection{Cutting annuli and Kahane's inequality}
One crucial step in the proof using stopping time method is to decorrelate the random measures $\{M_i\}_{i\in\mathbb{N}}$ in a suitable way. The classical Gaussian comparaison inequalities allow one to do so to some extend, usually under some weak correlation assumption. We can obtain such assumption if we force the time intervals of $M_i$ to be far away mutually. Indeed,
\begin{prop}\label{prop:WeakCorrelation}
Let $\epsilon$ be arbitraty positive constant. There exists some constant $\delta$ only depending on $\epsilon$ such that if $|t-s|>\delta$, for all $(\theta,\theta')\in[0,2\pi]^2$,
\begin{equation}
\mathbb{E}[Y(s,\theta)Y(t,\theta')]<\epsilon.
\end{equation}
\end{prop}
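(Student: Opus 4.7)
The strategy is direct estimation from the explicit covariance formula~\eqref{eq:NoiseCorr}, exploiting the fact that when two radii $e^{-s}, e^{-t}$ are well-separated multiplicatively, the difference $|e^{-s}e^{i\theta}-e^{-t}e^{i\theta'}|$ is forced to be comparable to $\max(e^{-s}, e^{-t})$, uniformly in the angles.

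By symmetry of the covariance we may assume $s<t$, so that $e^{-s}\vee e^{-t}=e^{-s}$. Factoring $e^{-s}$ out of the denominator in~\eqref{eq:NoiseCorr} yields the identity
\begin{equation*}
\mathbb{E}[Y(s,\theta)Y(t,\theta')]=-\ln\bigl|e^{i\theta}-e^{-(t-s)}e^{i\theta'}\bigr|.
\end{equation*}
The key observation is that the right hand side depends on $s,t$ only through the gap $u\coloneqq t-s>0$, and by the reverse triangle inequality,
\begin{equation*}
\bigl|e^{i\theta}-e^{-u}e^{i\theta'}\bigr|\geq 1-e^{-u},
\end{equation*}
uniformly in $(\theta,\theta')\in[0,2\pi]^2$. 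Consequently
\begin{equation*}
\mathbb{E}[Y(s,\theta)Y(t,\theta')]\leq -\ln(1-e^{-u}),
\end{equation*}
and the right hand side tends to $0^+$ as $u\to\infty$. Hence, given $\epsilon>0$ it suffices to choose $\delta>0$ such that $-\ln(1-e^{-\delta})<\epsilon$, e.g. $\delta=\ln\frac{1}{1-e^{-\epsilon}}$, to ensure that $|t-s|>\delta$ implies the claimed bound.

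There is no real obstacle here: the statement reduces to an elementary estimate on the explicit kernel~\eqref{eq:NoiseCorr}, and the uniformity in $(\theta,\theta')$ comes for free from the reverse triangle inequality since the dominant scale $e^{-s}$ was already extracted. The only point worth noting is that $\delta$ depends solely on $\epsilon$ (not on $s,t,\theta,\theta'$), which is precisely the quantitative content used later to pair~$\delta$ with Kahane's comparison inequalities in order to decorrelate the random measures $\{M_i\}$.
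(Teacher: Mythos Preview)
Your proof is correct and follows essentially the same route as the paper: assume $s<t$, factor out the dominant scale $e^{-s}$ from the explicit kernel~\eqref{eq:NoiseCorr}, bound $|e^{i\theta}-e^{-(t-s)}e^{i\theta'}|\geq 1-e^{-(t-s)}$, and invert to obtain the explicit threshold $\delta=\ln\frac{1}{1-e^{-\epsilon}}$. The only cosmetic difference is that the paper absorbs the angle into a single phase $e^{i(\theta-\theta')}$ before applying the same triangle-inequality bound.
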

\begin{proof}
Recall the lateral noise correlation function
\begin{equation}
\mathbb{E}[Y(s,\theta)Y(t,\theta')]=\ln\frac{e^{-s}\vee e^{-t}}{|e^{-s}e^{i\theta}-e^{-t}e^{i\theta'}|}.
\end{equation}
Without loss of generosity, let us suppose $s<t$, and the above expression equals
\begin{equation}
\mathbb{E}[Y(t,\theta)Y(s,\theta')]=\ln\frac{1}{|1-e^{-(t-s)e^{i(\theta-\theta')}}|}\leq\ln\frac{1}{1-e^{-(t-s)}}.
\end{equation}
In particular, any $\delta$ such that
\begin{equation}
\delta>\ln\frac{1}{1-e^{-\epsilon}}
\end{equation}
will satisfy the claim.
\end{proof}

To apply this observation to our case, we have to specify a certain kind of subset of $[|0,N-1|]$ (measurable with respect to the Brownian motion $B$) where the above weak correlation assumption is met. We will then apply Kahane's inequality to decorrelate the random measures on this subset.
\begin{defi}[Cutting annuli]\label{def:CuttingAnnuli}
Let $B$ be the Brownian motion corresponding to the radial part of the GFF $X$ as defined above. We call a subset $I=\{i_1<i_2<\dots<i_k\}\subset[|0,N-1|]$ a $(B,\delta)$-cutting annuli if the following condition is met:
\begin{equation}
\forall i_j\in I,\quad |T_{i_{j+1}}-T_{i_j+1}|>\delta.
\end{equation}
Otherwise put, for all different indices $i\neq j$, the intervals $[i_i,i_{i+1}]$ and $[i_j,i_{j+1}]$ are seperated by distance $\delta$ if $(i_i,i_j)\in I^2$.
\end{defi}

It is useful to reformulate the above definition in another way. For any indice $i\in\mathbb{N}$, define the $(B,\delta)$-precutting image of $i$, denoted by $\overline{i^{(B,\delta)}}$, in the following way:
\begin{defi}[Precutting image]\label{def:PreCuttingImage}
Let $i\in\mathbb{N}$. We define the $(B,\delta)$-precutting image of $i$ to be the only indice $\overline{i^{(B,\delta)}}$ such that
\begin{equation}
T_{i}-1\in[T_{\overline{i^{(B,\delta)}}}, T_{\overline{i^{(B,\delta)}}+1}).
\end{equation}
If such indice does not exist, then we define $\overline{i^{(B,\delta)}}$ to be $-1$. We drop the indices on $(B,\delta)$ and simply write $\overline{i}$ if there is no ambiguity.
\end{defi}

Then for every ordered set $I=\{i_1<i_2<\dots\}\subset\mathbb{N}$, $I$ is a $(B,\delta)$-cutting annuli if and only if
\begin{equation}
i_1<\overline{i_2}<i_2<\overline{i_3}<\dots
\end{equation}
is satisfied. If this is the case, we define the an extended version of $I$ in the following manner:
\begin{defi}[Extended cutting annuli]\label{def:ExtendedCuttingAnnuli}
Let $I=\{i_1<i_2<\dots\}\subset\mathbb{N}$ be an ordered set. We will use the notation
\begin{equation}
I\leftrightarrow(B,\delta)
\end{equation}
to denote that $I$ is a $(B,\delta)$-cutting annuli. If it is the case, define
\begin{equation}
\overline{I^{(B,\delta)}}\coloneqq\{\overline{i_1}<i_1<\overline{i_2}<i_2<\overline{i_3}<\dots\}
\end{equation}
to be the $(B,\delta)$-extended version of $I$. We drop the indice $(B,\delta)$ when there is no ambiguity.
\end{defi}

Now we are able to state the Gaussian decorrelation inequality that we will be depending on.
\begin{lemm}[Kahane's decorrelation]\label{lem:KahaneDecorrelation}
Let
\begin{equation}
(\alpha,\beta)\in\mathcal{R}^{T}_{loc}=\mathcal{R}_{loc}\cap\{\alpha<Q-\frac{\gamma}{2}\}.
\end{equation}
Consider the function
\begin{equation}
\overline{G^{I}}(\alpha+i\beta;T_N)\coloneqq\mathbb{E}\left[\mathbf{1}_{I\leftrightarrow(B,\delta)}\prod_{i\in I}\left(e^{\frac{\beta^2}{2}(T_{i+1}-T_{i})}\mathbf{1}_{\{M_{i}>e^{\frac{\gamma i}{2}}\}}\right)\right].
\end{equation}
Then when $\delta>\ln\frac{1}{1-e^{-\frac{1}{8\gamma}}}$, we have the following bound for any $q>\frac{(Q-\alpha)^2}{(Q-\alpha)^2-\beta^2}$:
\begin{equation}
|\overline{G^{I}}(\alpha+i\beta;T_N)|\leq C_0\times C^{\#(I)}\prod\limits_{i\in I}e^{-\frac{\gamma i}{4q}}
\end{equation}
where $C_0, C$ are constants independent of $N$ and $I$ and locally uniform in $(\alpha,\beta)$.
\end{lemm}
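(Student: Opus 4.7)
The plan is to combine a simple Markov step on each indicator with a single Hölder split, and then to handle the resulting $\prod_{i\in I} M_i$ moment by conditioning on the Brownian motion and performing a direct Wick computation that turns the weak lateral-noise correlations into a linear-in-$\#(I)$ error. First, the elementary bound $\mathbf{1}_{\{M_i>e^{\gamma i/2}\}}\le M_i^{1/(2q)}e^{-\gamma i/(4q)}$ peels off the target factor $\prod_{i\in I}e^{-\gamma i/(4q)}$ and reduces the claim to bounding
\begin{equation*}
\mathbb{E}\Bigl[\mathbf{1}_{I\leftrightarrow(B,\delta)}\prod_{i\in I}e^{\frac{\beta^2}{2}(T_{i+1}-T_i)}\prod_{i\in I}M_i^{1/(2q)}\Bigr]
\end{equation*}
by $C_0\,C^{\#(I)}$. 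I then apply Hölder with conjugate exponents $(p,p')=(\tfrac{2q}{2q-1},2q)$, chosen precisely so that the power on $M_i$ in the second factor becomes exactly $1$. The hypothesis $q>(Q-\alpha)^2/((Q-\alpha)^2-\beta^2)$ is (more than) enough to guarantee $p\beta^2<(Q-\alpha)^2$, so Proposition~\ref{prop:renewalprop} yields a finite $\mathbb{E}[e^{p\beta^2 T_1/2}]$; by the i.i.d.\ property of $(T_{i+1}-T_i)_{i\in I}$, the first Hölder factor is immediately a constant to the power $\#(I)/p$, which absorbs into the claimed $C^{\#(I)}$.

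The heart of the argument is controlling $\mathbb{E}[\mathbf{1}_{I\leftrightarrow}\prod_i M_i]$. Conditioning on $B$ (which freezes the $T_i$'s and the event $\{I\leftrightarrow(B,\delta)\}$) and writing $M_i=\int_0^{T^i_1}e^{\gamma(B^i_r-(Q-\alpha)r)}Z_{r+T_i}\,dr$, Fubini reduces the $Y$-expectation to the Wick identity
\begin{equation*}
\mathbb{E}_Y\Bigl[\prod_{i\in I}Z_{s_i}\Bigr]=(2\pi)^{-\#(I)}\int_{[0,2\pi]^{\#(I)}}\exp\Bigl(\gamma^2\sum_{i<j}\mathbb{E}[Y(s_i,\theta_i)Y(s_j,\theta_j)]\Bigr)\,d\bm{\theta}.
\end{equation*}
On $\{I\leftrightarrow(B,\delta)\}$, the points $s_i\in[T_i,T_{i+1}]$ satisfy $|s_k-s_j|\ge(k-j)\delta$ for $j<k$ in $I$, so Proposition~\ref{prop:WeakCorrelation} gives a pairwise bound $\mathbb{E}[Y_jY_k]\le\ln\tfrac{1}{1-e^{-(k-j)\delta}}$, geometrically summable in $k-j$. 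Thus $\sum_{i<j}\mathbb{E}[Y_iY_j]\le C\#(I)$, giving $\mathbb{E}_Y[\prod_i Z_{s_i}]\le C_1^{\#(I)}$. Using $\mathbb{E}_Y[Z_s]=1$, this passes to $\mathbb{E}_Y[\prod_i M_i\mid B]\le C_1^{\#(I)}\prod_i\tilde M_i$, where $\tilde M_i:=\int_0^{T^i_1}e^{\gamma(B^i_r-(Q-\alpha)r)}dr$; the strong Markov property at the stopping times $T_i$ makes the $\tilde M_i$'s mutually independent, so $\mathbb{E}_B[\prod_i\tilde M_i]=\mathbb{E}[M_1]^{\#(I)}$, finite by Proposition~\ref{prop:StoppingLocal} thanks to the standing assumption $\alpha<Q-\gamma/2$.

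The main obstacle is precisely the step $\sum_{i<j}\mathbb{E}[Y_iY_j]\le C\#(I)$, as a naive estimate would only produce the quadratic $\binom{\#(I)}{2}$; this is exactly why the explicit threshold $\delta>\ln(1/(1-e^{-1/(8\gamma)}))$ appears in the hypothesis, so that $\gamma^2$ times the per-pair correlation is summable as a geometric series with controlled constant. Once this linear bound is established, combining the two Hölder factors produces $C_0\cdot C^{\#(I)}\prod_{i\in I}e^{-\gamma i/(4q)}$, as claimed; everything else is routine (Markov, Hölder, finite exponential moments from the inverse-Gaussian law of $T_1$, and Brownian independence at the stopping times).
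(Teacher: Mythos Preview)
Your argument is correct and reaches the stated bound, but it takes a genuinely different route from the paper. The paper decorrelates the $M_i$ via the Kahane--Slepian diagonal inequality (Lemma~\ref{lem:KahaneDiagonal}): it augments each lateral field by an independent $\sqrt{\epsilon}N_i$ and compares with an i.i.d.\ family sharing a common $\sqrt{\epsilon}\widetilde N$, obtaining
\[
\mathbb{E}\Bigl[\mathbf{1}_{I\leftrightarrow(B,\delta)}\prod_{i\in I}M_i\Bigr]\le e^{\#(I)^2\frac{\gamma^2}{2}\epsilon}\prod_{i\in I}\mathbb{E}[M_i],
\]
a \emph{quadratic} error in $\#(I)$. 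That quadratic factor is then absorbed into the decay $\prod_{i\in I}e^{-\gamma i/2}$ using that the indices of $I$ are distinct nonnegative integers (so $\sum_{i\in I}i\ge\binom{\#(I)}{2}$); this absorption is precisely what forces the threshold $\epsilon<\tfrac{1}{8\gamma}$, i.e.\ $\delta>\ln\frac{1}{1-e^{-1/(8\gamma)}}$. By contrast, you condition on $B$ and compute the $Y$-expectation explicitly via the Gaussian moment identity, then exploit that the $m$-th and $l$-th intervals of $I$ are separated by at least $(l-m)\delta$ (not merely $\delta$), so the pairwise covariances form a summable geometric series and the total cross term is only \emph{linear} in $\#(I)$. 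This is more elementary (no Gaussian comparison), sharper, and in fact works for any $\delta>0$ with a $\delta$-dependent constant.

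Two small corrections. First, your sentence ``this is exactly why the explicit threshold $\delta>\ln(1/(1-e^{-1/(8\gamma)}))$ appears in the hypothesis'' mislocates the role of the threshold: in your own argument the series $\sum_{m\ge1}\ln\frac{1}{1-e^{-m\delta}}$ converges for every $\delta>0$, so your proof does not need that specific cutoff; the cutoff is what the paper's Kahane--Slepian route requires to cancel its quadratic error. Second, your inequality ``$|s_k-s_j|\ge(k-j)\delta$ for $j<k$ in $I$'' is only correct if $k-j$ denotes the difference of \emph{positions} in the ordered set $I$, not of the actual integer indices; with that reading your geometric-series bound goes through as written.
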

\begin{proof}
If 
\begin{equation}
\mathbf{1}_{I\leftrightarrow(B,\delta)}=0
\end{equation}
then the claim is trivial. We thus assume that
\begin{equation}
I\leftrightarrow(B,\delta)
\end{equation}
holds. With this assumption in mind, if $Y_i(t,\theta)$ denotes the underlying log-correlated fields of the measures $M_{i}$, i.e. if
\begin{equation}
M_{i}=\int_{0}^{T^{i}_1}e^{\gamma(B^{i}_r-(Q-\alpha)r)}\left(\frac{1}{2\pi}\int_{0}^{2\pi}e^{\gamma Y_i(r,\theta)}d\theta\right)dr
\end{equation}
then by Proposition~\ref{prop:WeakCorrelation}, if $i,j\in I$ and $i\neq j$ then for all $s,t\in[0,T^{i}_1]\times[0,T^{j}_1]$,
\begin{equation}
\mathbb{E}[Y_i(s,\theta)Y_j(t,\theta')]<\epsilon
\end{equation}
where $\epsilon>0$ is such that
\begin{equation}
e^{-\delta}+e^{-\epsilon}=1.
\end{equation}

We want to use this information and apply the diagonal decorrelation inequality of Kahane-Slepian, Lemma~\ref{lem:KahaneDiagonal}. Consider the following modified GMC measures with a collection of i.i.d. standard Gaussian variables denoted by $\widetilde{N},(N_i)_{i\in\mathbb{N}}$.:
\begin{equation}
\overline{M_i}=\int_{0}^{T^{i}_1}e^{\gamma(B^{i}_r-(Q-\alpha)r)}\left(\frac{1}{2\pi}\int_{0}^{2\pi}e^{\gamma (Y_i(r,\theta)+\sqrt{\epsilon}N_i)}d\theta\right)dr=e^{\gamma\sqrt{\epsilon}N_i}M_i
\end{equation}
\begin{equation}
\widehat{M_i}=\int_{0}^{T^{i}_1}e^{\gamma(B^{i}_r-(Q-\alpha)r)}\left(\frac{1}{2\pi}\int_{0}^{2\pi}e^{\gamma (\widetilde{Y_i}(r,\theta)+\sqrt{\epsilon}\widetilde{N})}d\theta\right)dr=e^{\gamma\sqrt{\epsilon}\widetilde{N}}\widetilde{M_i}
\end{equation}
where $(\widetilde{Y_i})_{i\in\mathbb{N}}$ are mutually independent copies of $(Y_i)_{i\in\mathbb{N}}$ (similarly for $\widetilde{M_i}$). Since for all $(i,j)$,
\begin{equation}
\mathbb{E}[(Y_i(s,\theta)+\sqrt{\epsilon}N_i)(Y_j(t,\theta')+\sqrt{\epsilon}N_j)]\leq\mathbb{E}[(\widetilde{Y_i}(s,\theta)+\sqrt{\epsilon}\widetilde{N})(\widetilde{Y_j}(t,\theta')+\sqrt{\epsilon}\widetilde{N})]
\end{equation}
and $Y+\sqrt{\epsilon}N,\widetilde{Y}+\sqrt{\epsilon}\widetilde{N}$ have the same variance, Kahane-Slepian inequality Lemma~\ref{lem:KahaneDiagonal} yields, for every set $I\leftrightarrow(B,\delta)$
\begin{equation}
\mathbb{E}\left[\mathbf{1}_{I\leftrightarrow(B,\delta)}\prod_{i\in I}\overline{M_i}\right]\leq\mathbb{E}\left[\mathbf{1}_{I\leftrightarrow(B,\delta)}\prod_{i\in I}\widehat{M_i}\right]
\end{equation}
which is equivalent to
\begin{equation}
\mathbb{E}\left[\prod_{i\in I}e^{\gamma\sqrt{\epsilon}N_i}\right]\mathbb{E}\left[\mathbf{1}_{I\leftrightarrow(B,\delta)}\prod_{i\in I}M_i\right]\leq\mathbb{E}\left[\prod_{i\in I}e^{\gamma\sqrt{\epsilon}\widetilde{N}}\right]\mathbb{E}\left[\mathbf{1}_{I\leftrightarrow(B,\delta)}\prod_{i\in I}\widetilde{M_i}\right].
\end{equation}
We retain the weaker relation
\begin{equation}
\mathbb{E}\left[\mathbf{1}_{I\leftrightarrow(B,\delta)}\prod_{i\in I}M_i\right]\leq e^{\#(I)^2\frac{\gamma^2}{2}\epsilon}\prod_{i\in I}\mathbb{E}\left[M_i\right].
\end{equation}

To finish the prove we use Markov inequality on the indicator and the property in the stopping time region from Proposition~\ref{prop:StoppingLocal}: remember
\begin{equation}\label{eq:StoppingTimeFirstBound}
\left|\overline{G^{I}}(\alpha+i\beta;T_N)\right|=\mathbb{E}\left[\mathbf{1}_{I\leftrightarrow(B,\delta)}\prod_{i\in I}\left(e^{\frac{\beta^2}{2}(T_{i+1}-T_{i})}\mathbf{1}_{\{M_{i}>e^{\frac{\gamma i}{2}}\}}\right)\right]
\end{equation}
and use Hölder inequality with a pair $\frac{1}{p}+\frac{1}{q}=1$, $0<p<\frac{(Q-\alpha)^2}{\beta^2}$, i.e.
\begin{equation}
q>\frac{(Q-\alpha)^2}{(Q-\alpha)^2-\beta^2}
\end{equation}
to have
\begin{equation}
\mathbb{E}\left[\mathbf{1}_{I\leftrightarrow(B,\delta)}\prod_{i\in I}\left(e^{\frac{\beta^2}{2}(T_{i+1}-T_{i})}\mathbf{1}_{\{M_{i}>e^{\frac{\gamma i}{2}}\}}\right)\right]\leq\mathbb{E}\left[\prod_{i\in I}e^{\frac{p\beta^2}{2}(T_{i+1}-T_{i})}\right]^{\frac{1}{p}}\mathbb{E}\left[\mathbf{1}_{I\leftrightarrow(B,\delta)}\prod_{i\in I}\mathbf{1}_{\{M_{i}>e^{\frac{\gamma i}{2}}\}}\right]^{\frac{1}{q}}
\end{equation}
By the assumption on $p$, for each $i\in I$ the expectation is bounded uniformly
\begin{equation}
\mathbb{E}\left[e^{\frac{p\beta^2}{2}(T_{i+1}-T_{i})}\right]<C
\end{equation}
with some constant $C$ locally uniform in $(\alpha,\beta)$. Next, use
\begin{equation}
\mathbf{1}_{\{M_{i}>e^{\frac{\gamma i}{2}}\}}\leq e^{-\frac{\gamma i}{2}}M_i
\end{equation}
and the above decorrelation inequality yields
\begin{equation}\label{eq:StoppingTimeHolder}
\mathbb{E}\left[\mathbf{1}_{I\leftrightarrow(B,\delta)}\prod_{i\in I}\mathbf{1}_{\{M_{i}>e^{\frac{\gamma i}{2}}\}}\right]^{\frac{1}{q}}\leq\prod_{i\in I}e^{-\frac{\gamma i}{2}}\mathbb{E}\left[\mathbf{1}_{I\leftrightarrow(B,\delta)}\prod_{i\in I}M_i\right]^{\frac{1}{q}}\leq e^{\#(I)^2\frac{\gamma^2}{2}\epsilon}\prod_{i\in I}e^{-\frac{\gamma i}{2}}\prod_{i\in I}\mathbb{E}\left[M_i\right].
\end{equation}

Now by the assumption on the stopping time region Proposition~\ref{prop:StoppingLocal}, we can bound
\begin{equation}
\prod_{i\in I}\mathbb{E}\left[M_i\right]<C^{\#(I)}
\end{equation}
with some constant $C$ locally uniform in $(\alpha,\beta)$. Also, for
\begin{equation}
\epsilon<\frac{1}{8\gamma}\quad \text{or equivalently}\quad\delta>\ln\frac{1}{1-e^{-\frac{1}{8\gamma}}}
\end{equation}
we have (for $\#(I)\geq 2$, the case $\#(I)<2$ can be treated similarly)
\begin{equation}
e^{\#(I)^2\frac{\gamma^2}{2}\epsilon}\prod_{i\in I}e^{-\frac{\gamma i}{2}}\leq \prod_{i\in I}e^{-\frac{\gamma i}{4}}
\end{equation}
since the indices in $I$ are positive and distinct.

Finally Equations~\eqref{eq:StoppingTimeFirstBound} and \eqref{eq:StoppingTimeHolder} yield, with any $\delta>\ln\frac{1}{1-e^{-\frac{1}{8\gamma}}}$ and $q>\frac{(Q-\alpha)^2}{(Q-\alpha)^2-\beta^2}$,
\begin{equation}
\left|\overline{G^{I}}(\alpha+i\beta;T_N)\right|\leq C^{\#(I)}\prod_{i\in I}e^{-\frac{\gamma i}{4q}}
\end{equation}
with $C$ locally uniform in $(\alpha,\beta)$: this is the claim.
\end{proof}

\begin{rema}
Notice that this upper bound does not depend on $\mu$, since this parameter does not appear in the definition of $\overline{G^{I}}(\alpha+i\beta;T_N)$.
\end{rema}

\subsection{Renewal theory and boundedness of local Liouville correlation function}
In this subsection we will prove another estimation:
\begin{lemm}\label{lem:RenewalBound}
We use the same notations as above and let
\begin{equation}
(\alpha,\beta)\in\mathcal{R}^{T}_{loc}=\mathcal{R}_{loc}\cap\{\alpha<Q-\frac{\gamma}{2}\}.
\end{equation}
We claim that for any fixed $\delta$,
\begin{equation}
|G(\alpha+i\beta;T_N)|\leq C_0\times\sum\limits_{J\in[|0,N-1|]}C^{\#(J)}\left|\overline{G^{J}}(\alpha+i\beta;T_N)\right|
\end{equation}
where $C_0, C$ are constants independent of $N$ and $J$ and locally uniform in $(\alpha,\beta)$.
\end{lemm}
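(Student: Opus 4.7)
The plan is to start from the triangle inequality
\[
|G(\alpha+i\beta;T_N)|\leq\sum_{J\subseteq[|0,N-1|]}|G^J(\alpha+i\beta;T_N)|
\]
and bound each $|G^J|$ by a quantity of the form $C_0 C^{\#(J)}|\overline{G^{I(J)}}|$, where $I(J)\subseteq J$ is a $(B,\delta)$-cutting-annuli extracted greedily: scan $J$ in increasing order and include each index in $I(J)$ whenever the $\delta$-gap condition of Definition~\ref{def:CuttingAnnuli} holds with the previously-included element. By construction $I(J)\leftrightarrow(B,\delta)$, and every $j\in J\setminus I(J)$ is ``clustered'' within $\delta$-time of some preceding element of $I(J)$.

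To reduce $|G^J|$ to $|\overline{G^{I(J)}}|$, I would proceed in two steps. First, I would deal with the purely oscillating Brownian factor at indices that carry no other dependency. A direct computation with the inverse-Gaussian density~\eqref{eq:T1density} gives
\[
\mathbb{E}\left[e^{i\beta B_{T_1}+\frac{\beta^2}{2}T_1}\right]=1
\]
throughout the pencil region $|\beta|<Q-\alpha$, reflecting the martingale property of $e^{i\beta B_t+\frac{\beta^2}{2}t}$ combined with optional stopping. To expose such indices, apply the product-expansion trick of Proposition~\ref{prop:SmallIndicator} to the $i\notin J$ factors and expand over $K\subseteq J^c$: in each resulting term, the indices in $(J\cup K)^c$ carry only this Brownian factor and can be integrated out block by block via the strong Markov property, contributing $1$ each. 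Second, for the remaining factors apply $|e^{-\mu e^{-\gamma i}M_i}-e^{-\mu e^{-\gamma i/2}}|\leq 1$ keeping $\mathbf{1}_{M_i>e^{\gamma i/2}}$ for $i\in J$, and $|1-e^{-\mu e^{-\gamma i}(M_i\wedge e^{\gamma i/2})}|\leq\mu e^{-\gamma i/2}$ for $i\in K$, followed by H\"older with $1<p<(Q-\alpha)^2/\beta^2$ (possible in $\mathcal{R}^T_{loc}$) to split the moment of $\prod_{i\in J\cup K}e^{p\beta^2\tau_{i+1}/2}$ (bounded by $C^{\#(J\cup K)}$ using i.i.d.\ $\tau$'s and the finite MGF in the pencil region) from the indicator moment. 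For the indicator moment I would drop the indicators at $j\in J\setminus I(J)$ using Markov's inequality $\mathbf{1}_{M_j>e^{\gamma j/2}}\leq e^{-\gamma j/2}M_j$ to gain a summable decay $e^{-\gamma j/2}$, while the indicators at $i\in I(J)$, together with the (automatic) $\mathbf{1}_{I(J)\leftrightarrow(B,\delta)}$, reconstruct exactly $|\overline{G^{I(J)}}|$.

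Finally, I would re-sum over $J$ by grouping by $I(J)=I$. For each fixed cutting-annuli $I$, the contributions of clustered indices $j\in J\setminus I$ and of $K$-indices each carry a decay factor $\mu e^{-\gamma j/2}$, so the sum over valid $(J,K)$ producing this $I$ is controlled by $\prod_j(1+C'\mu e^{-\gamma j/2})<\infty$, which is absorbed into $C_0$. The $C^{\#(J\cup K)}$ H\"older factor is absorbed into the desired $C^{\#(J)}$. The main technical obstacle is the ``trivial-block integration'' in the first step: the lateral noise $Z_r$ is a log-correlated field that couples different intervals, so one cannot decouple the Brownian and lateral-noise factors na\"ively. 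The correct argument conditions first on the lateral noise and on the $\tau_i$ at indices in $J\cup K$, then applies the strong Markov property and the optional-stopping identity above block by block within the conditional expectation, using the tower property along the stopping-time filtration.
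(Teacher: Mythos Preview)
There is a genuine gap in the ``trivial-block integration'' step, and it cannot be repaired without abandoning the initial triangle inequality over $J$.

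Your identity $\mathbb{E}[e^{i\beta B_{T_1}+\frac{\beta^2}{2}T_1}]=1$ is correct, but notice that by the very definition of $T_1$ one has $B_{T_1}=(Q-\alpha)T_1-1$, so
\[
e^{i\beta B_{T_1}+\frac{\beta^2}{2}T_1}=e^{-i\beta}\,e^{\left(i\beta(Q-\alpha)+\frac{\beta^2}{2}\right)T_1}
\]
is a \emph{deterministic} function of $T_1$. For $j>i$ with $j\in J\cup K$, the measure $M_j$ involves the shifted lateral noise $Z_{r+T_j}$, and $T_j=\sum_{l<j}\tau_l$ depends on $\tau_i$. Thus, even after conditioning on the entire process $Z$ and on $\{\tau_l:l\in J\cup K\}$, the later non-trivial factors still depend on $\tau_i$ through the time-shift in $Z$. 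Since the ``trivial'' factor at index $i$ is itself a non-constant function of $\tau_i$, the tower property cannot integrate it to $1$ while holding later factors fixed. A related issue: your Markov step $\mathbf{1}_{M_j>e^{\gamma j/2}}\leq e^{-\gamma j/2}M_j$ for $j\in J\setminus I(J)$ introduces a product $\prod_j M_j$ over indices that are by construction \emph{not} $\delta$-separated, so Kahane's decorrelation (Lemma~\ref{lem:KahaneDecorrelation}) is unavailable there and the ``summable decay'' is not justified.

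The paper's argument avoids both problems by taking the triangle inequality only at the level of cutting annuli $I$, after first summing $\sum_{J\in\Phi^{-1}(I)}h_J$ \emph{inside} the expectation. By the structure of $\Phi^{-1}(I)$, this inner sum recombines the small/large split at the clustered indices $\overline{i_k}\leq j<i_k$ back into the full factor $e^{i\beta(B_{T_{j+1}}-B_{T_j})+\frac{\beta^2}{2}\tau_j}e^{-\mu e^{-\gamma j}M_j}$, which is bounded in modulus by $e^{\frac{\beta^2}{2}\tau_j}$. The product over each cluster is $e^{\frac{\beta^2}{2}(T_{i_k}-T_{\overline{i_k}})}$, and the key ingredient you are missing is the renewal-theory residual-time bound $\mathbb{E}[e^{\frac{\beta^2}{2}R_T(t)}]<C$ (Proposition~\ref{prop:renewalprop}(3) and Appendix~\ref{app:renewal}), valid precisely when $|\beta|<Q-\alpha$, which produces the $C^{\#(I)}$ factor. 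No optional-stopping cancellation is used.
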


As a corollary of the two previous lemmas we get the following consequence:
\begin{lemm}[Boundedness of local Liouville correlation function]\label{lem:Boundedness}
Let
\begin{equation}
(\alpha,\beta)\in\mathcal{R}^{T}_{loc}=\mathcal{R}_{loc}\cap\{\alpha<Q-\frac{\gamma}{2}\}.
\end{equation}
There exists some constant $C$ independent of $N$ and locally uniform in $(\alpha,\beta)$ such that
\begin{equation}
|G(\alpha+i\beta;T_N)|<C.
\end{equation}
\end{lemm}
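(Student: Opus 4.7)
The plan is to combine Lemma~\ref{lem:RenewalBound} and Lemma~\ref{lem:KahaneDecorrelation} directly: the former reduces $|G(\alpha+i\beta;T_N)|$ to a sum over subsets $J\subset[|0,N-1|]$ of the quantities $|\overline{G^J}(\alpha+i\beta;T_N)|$, while the latter gives, for each $J$, an exponentially small bound in the indices of $J$ that decays fast enough for the sum to converge uniformly in $N$.

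More concretely, I would first fix a threshold $\delta>\ln\frac{1}{1-e^{-1/(8\gamma)}}$ so that Lemma~\ref{lem:KahaneDecorrelation} applies with this cutting annuli parameter, and an exponent $q>\frac{(Q-\alpha)^2}{(Q-\alpha)^2-\beta^2}$. Such $q$ exists since $|\beta|<Q-\alpha$ on $\mathcal{R}^T_{loc}$, and both choices can be made locally uniform in $(\alpha,\beta)$ on a compact subset of $\mathcal{R}^T_{loc}$. Lemma~\ref{lem:RenewalBound} then yields
\begin{equation*}
|G(\alpha+i\beta;T_N)| \leq C_0\sum_{J\subset[|0,N-1|]}C_1^{\#(J)}\,|\overline{G^J}(\alpha+i\beta;T_N)|,
\end{equation*}
and substituting the bound from Lemma~\ref{lem:KahaneDecorrelation} gives
\begin{equation*}
|G(\alpha+i\beta;T_N)| \leq C_0 C_0'\sum_{J\subset[|0,N-1|]}(C_1 C_2)^{\#(J)}\prod_{i\in J}e^{-\gamma i/(4q)}.
\end{equation*}
Using the elementary identity $\sum_{J\subset[|0,N-1|]}\prod_{i\in J}x_i=\prod_{i=0}^{N-1}(1+x_i)$ with $x_i=C_1 C_2\,e^{-\gamma i/(4q)}$, the right-hand side is controlled by
\begin{equation*}
C_0 C_0'\prod_{i=0}^{\infty}\bigl(1+C_1 C_2\,e^{-\gamma i/(4q)}\bigr),
\end{equation*}
and this infinite product converges because $\sum_{i\geq 0}e^{-\gamma i/(4q)}<\infty$. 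Taking $C$ to be this finite upper bound finishes the proof.

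The argument is not really obstructed here, since the serious analytic work is done in Lemmas~\ref{lem:RenewalBound} and \ref{lem:KahaneDecorrelation}. The only point worth emphasizing is the combinatorial balance: the two factorial-style blow-ups $C_1^{\#(J)}$ from the renewal decomposition and $C_2^{\#(J)}$ from Kahane's decorrelation are jointly tamed by the geometric factor $\prod_{i\in J}e^{-\gamma i/(4q)}$, so that summing over subsets collapses to a convergent infinite product. One should also check that the constants $C_0$, $C_0'$, $C_1$, $C_2$, $q$, and $\delta$ can be chosen locally uniformly as $(\alpha,\beta)$ varies, but this is immediate from the corresponding local uniformity already built into both input lemmas, in particular the fact that $q$ stays bounded as long as $(\alpha,\beta)$ stays away from the boundary of $\mathcal{R}^T_{loc}$.
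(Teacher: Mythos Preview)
Your proof is correct and follows essentially the same route as the paper's own argument: combine Lemma~\ref{lem:RenewalBound} with Lemma~\ref{lem:KahaneDecorrelation}, then collapse the resulting sum over subsets into the convergent infinite product $\prod_{j\in\mathbb{N}}(1+Ce^{-\gamma j/(4q)})$. Your write-up is in fact a bit more explicit about the subset-to-product identity and about the local uniformity of the choice of $q$, but the underlying strategy is identical.
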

\begin{proof}
Combining the two previous lemmas, for any $\delta>\ln\frac{1}{1-e^{-\frac{1}{8\gamma}}}$ and $q>\frac{(Q-\alpha)^2}{(Q-\alpha)^2-\beta^2}$,
\begin{equation}
|G(\alpha+i\beta;T_N)|\leq C_0\times\sum\limits_{J\in[|0,N-1|]}C^{\#(J)}\prod_{j\in J}e^{-\frac{j\gamma}{4q}}\leq C_0\times\prod\limits_{j\in\mathbb{N}}(1+Ce^{-\frac{j\gamma}{4q}})<C
\end{equation}
and one checks that all constants are locally uniform in $(\alpha,\beta)$.
\end{proof}

The rest of this subsection will be devoted to the proof of Lemma~\ref{lem:RenewalBound}.
\begin{proof}[Proof of Lemma~\ref{lem:RenewalBound}]
The proof depends heavily on the definition of cutting annuli.

We first define a map $\Phi_{(B,\delta)}$, depending on $(B,\delta)$, from the subsets of $\mathbb{N}$ to the sets of cutting annuli.
\begin{defi}[Reduction map to cutting annuli]
Let $J=\{j_1<\dots<j_k\}$ be any finite ordered subset of $\mathbb{N}$. The reduction map $\Phi_{(B,\delta)}$ maps $J$ to another ordered subset $I=\{i_1<\dots<i_l\}$ of $\mathbb{N}$ in the following way (from the largest element downward):
\begin{itemize}
	\item $i_l=j_k$;
	\item For $j=l-1$, $i_j=\sup\{a\in J; T_{a+1}+\delta<T_{i_{j+1}}=T_{i_l}\}$.
	\item Recursively so for $j=l-2$, $j=l-3$ until the algorithm stops.
\end{itemize}
Then $l$ is defined as the number of elements in $I$. Observe the two following properties of this map:
\begin{enumerate}
	\item $I$ is a subset of $J$;
	\item $I$ is a $(B,\delta)$-cutting annuli.
\end{enumerate}
\end{defi}
Intuitively speaking, this is a greedy way of choosing elements from $J$ downward in such a way that the time intervals they represent are seperated apart by distance $\delta$. Although $I$ depends on $(B,\delta)$, we don't keep that in the notation when there is no ambiguity.

We will use an important observation on the preimage by $\Phi_{(B,\delta)}$ of a cutting annuli. Recall the Definition~\ref{def:ExtendedCuttingAnnuli} that couples any $(B,\delta)$-cutting annuli $I$ with an extended version $\overline{I}$.
\begin{prop}[Preimage by reduction map]\label{prop:ReductionPreimage}
Let $I$ be a $(B,\delta)$-cutting annuli an $\overline{I}$ the $(B,\delta)$-extended version of $I$. Suppose that $i_0=-\infty$ and
\begin{equation}
\overline{I}=\{\overline{i_1}<i_1<\overline{i_2}<i_2<\overline{i_3}<\dots<\overline{i_l}<i_l\}.
\end{equation}
Then $J\in\Phi_{(B,\delta)}^{-1}(I)$ if and only if:
\begin{enumerate}
	\item $I\subset J$, i.e. for all $i_j\in I$, $i_j\in J$.
	\item Let $i_k\in I$, if $i_{k}<j<\overline{i_{k+1}}$, then $j\neq J$.
\end{enumerate}
For $\overline{i_{k}}\leq j<i_{k}$, $j$ can be in $J$ or not: this gives exactly the cardinal of $\Phi_{(B,\delta)}^{-1}(I)$, although we don't need that
number later.
\end{prop}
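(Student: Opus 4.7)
The plan is to verify both implications by unwinding the greedy construction of $\Phi_{(B,\delta)}$. A preliminary observation makes everything cleaner: the separation condition $T_{a+1}+\delta<T_{i_{j+1}}$ that defines each recursive step translates exactly into the combinatorial inequality $a<\overline{i_{j+1}}$, via the definition of the precutting image. Under this dictionary the rule becomes $i_j=\sup\{a\in J:a<\overline{i_{j+1}}\}$, and the statement of the proposition becomes a direct bookkeeping check. Throughout I will adopt the conventions $i_0=-\infty$ and $\overline{i_{l+1}}=+\infty$, so that condition (2) is meaningful for all $k\in\{0,1,\dots,l\}$; this is the only non-obvious part of the statement and should be made explicit at the outset.

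For the forward implication, assume $\Phi_{(B,\delta)}(J)=I$. By construction each $i_j$ was picked out of $J$, hence $I\subset J$, which is (1). For (2), suppose for contradiction that some $j\in J$ satisfies $i_k<j<\overline{i_{k+1}}$. If $1\le k<l$, then at the step producing $i_k$ the algorithm selects the supremum of $\{a\in J:a<\overline{i_{k+1}}\}$; since $j$ is a candidate and $j>i_k$, this contradicts the maximality of $i_k$. The case $k=l$ is immediate because $i_l$ is, by construction, the maximum of $J$. Finally the case $k=0$, which forbids elements of $J$ below $\overline{i_1}$, follows from the algorithm terminating after producing $i_1$: had such a $j$ existed, the recursion would have produced an additional element $<i_1$, contradicting $\Phi_{(B,\delta)}(J)=I$.

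For the backward implication, assume (1) and (2). I argue by downward induction on $k\in\{l,l-1,\dots,1\}$ that the greedy algorithm applied to $J$ produces the sequence $i_l,i_{l-1},\dots,i_1$ in that order. For the base case, condition (2) at $k=l$ forces $\max J=i_l$, which is precisely the first choice of the algorithm. For the inductive step, assume the algorithm has produced $i_{k+1}$ and now seeks $\sup\{a\in J:a<\overline{i_{k+1}}\}$. By (1), $i_k\in J$; since $I$ is a $(B,\delta)$-cutting annuli one has $i_k<\overline{i_{k+1}}$, so $i_k$ is admissible. Condition (2) applied at index $k$ rules out any element of $J$ in $(i_k,\overline{i_{k+1}})$, so $i_k$ is in fact the supremum, and the algorithm picks it. After $i_1$ is produced, the boundary clause of (2) at $k=0$ guarantees that the next supremum is over an empty set, so the algorithm halts, yielding exactly $I$.

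The only genuine obstacle here is the notational one, namely recording the boundary conventions $i_0=-\infty$ and $\overline{i_{l+1}}=+\infty$ so that (2) actually captures the two extremal exclusions; once this is in place, both directions are essentially mechanical. The cardinality claim at the end of the proposition is then a direct consequence: the only freedom in choosing $J\in\Phi_{(B,\delta)}^{-1}(I)$ is, independently for each $k\in\{1,\dots,l\}$, the inclusion or exclusion of each index inside the half-open interval $[\overline{i_k},i_k)$, while everything in $I$ is forced to belong to $J$ and everything in the intervals $(i_k,\overline{i_{k+1}})$ is forced to lie outside $J$.
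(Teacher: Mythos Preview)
Your proof is correct. The paper itself does not supply a proof of this proposition; it is stated as an observation inside the proof of Lemma~\ref{lem:RenewalBound} and immediately used without justification. Your argument fills in exactly the details the paper leaves implicit, and the key step---rewriting the temporal condition $T_{a+1}+\delta<T_{i_{j+1}}$ as the combinatorial condition $a<\overline{i_{j+1}}$ via the precutting image---is the natural one.

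Two minor remarks. First, you are right to flag the boundary conventions $i_0=-\infty$ and $\overline{i_{l+1}}=+\infty$; the paper only states the former, and condition~(2) as written (``Let $i_k\in I$'') is literally indexed by $k\in\{1,\dots,l\}$, so both the $k=0$ and $k=l$ cases need the extra conventions you supply. Second, note that Definition~\ref{def:PreCuttingImage} in the paper writes $T_i-1$ where it should presumably read $T_i-\delta$ for the equivalence with Definition~\ref{def:CuttingAnnuli} to hold; your translation step implicitly assumes the corrected version, which is clearly what is intended.
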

We see from this observation that $\Phi_{(B,\delta)}$ is surjective, say from the set of subsets of $[|0,N-1|]$ to the set of cutting annuli of $[|0,N-1|]$: this is because for any cutting annuli $I$, $I$ itself is contained in the preimage $\Phi_{(B,\delta)}^{-1}(I)$.

Now let us return to the study of the function $G$. Remember
\begin{equation}
G(\alpha+i\beta;T_N)=\sum\limits_{J\subset[|0,N-1|]}G^{J}(\alpha+i\beta;T_N)
\end{equation}
with $G(J)$ denotes the configuration where measures with indices in $J$ are extremely large
\begin{align}
G^{J}(\alpha+i\beta;T_N)\coloneqq\mathbb{E}\bigg[&\prod_{i\in J}\left(e^{i\beta (B_{T_{i+1}}-B_{T_{i}})+\frac{\beta^2}{2}(T_{i+1}-T_{i})}\left(e^{-\mu e^{-\gamma i}M_{i}}-e^{-\mu e^{-\frac{\gamma i}{2}}}\right)\mathbf{1}_{\{M_{i}>e^{\frac{\gamma i}{2}}\}}\right)\\
\times&\prod_{i\in J^c}\left(e^{i\beta (B_{T_{i+1}}-B_{T_{i}})+\frac{\beta^2}{2}(T_{i+1}-T_{i})}e^{-\mu e^{-\gamma i}(M_{i}\wedge e^{\frac{\gamma i}{2}})}\right)\bigg]\nonumber
\end{align}
Now rewrite the sum in the following way, depending on the cutting annuli $I=\Phi_{(B,\delta)}(J)$:
\begin{equation}\label{eq:ReductionSumAnnuli}
G(\alpha+i\beta;T_N)=\sum\limits_{I\subset[|0,N-1|]}\mathbb{E}\left[\mathbf{1}_{I\leftrightarrow(B,\delta)}\sum\limits_{J\in\Phi_{(B,\delta)}^{-1}(I)}h_J\right]
\end{equation}
where the notation $h_J$ stands for
{\small
\begin{equation}
\prod_{i\in J}\left(e^{i\beta (B_{T_{i+1}}-B_{T_{i}})+\frac{\beta^2}{2}(T_{i+1}-T_{i})}\left(e^{-\mu e^{-\gamma i}M_{i}}-e^{-\mu e^{-\frac{\gamma i}{2}}}\right)\mathbf{1}_{\{M_{i}>e^{\frac{\gamma i}{2}}\}}\right)\prod_{i\in J^c}\left(e^{i\beta (B_{T_{i+1}}-B_{T_{i}})+\frac{\beta^2}{2}(T_{i+1}-T_{i})}e^{-\mu e^{-\gamma i}(M_{i}\wedge e^{\frac{\gamma i}{2}})}\right).
\end{equation}
}

Let us fix an ordered cutting annuli $I=\{i_1<i_2<\dots<i_l\}\subset[|0,N-1|]$ and focus on controlling
\begin{equation}
\left|\mathbb{E}\left[\mathbf{1}_{I\leftrightarrow(B,\delta)}\sum\limits_{J\in\Phi_{(B,\delta)}^{-1}(I)}h_J\right]\right|.
\end{equation}
By Proposition~\ref{prop:ReductionPreimage} on the preimage $\Phi_{(B,\delta)}^{-1}(I)$ above, this quantity is equal to
\begin{align}
\bigg|\mathbb{E}\bigg[\mathbf{1}_{I\leftrightarrow(B,\delta)}
&\prod_{k=1}^{l}\left(e^{i\beta (B_{T_{i_k+1}}-B_{T_{i_k}})+\frac{\beta^2}{2}(T_{i_k+1}-T_{i_k})}\left(e^{-\mu e^{-\gamma i_k}M_{i_k}}-e^{-\mu e^{-\frac{\gamma i_k}{2}}}\right)\mathbf{1}_{\{M_{i_k}>e^{\frac{\gamma i_k}{2}}\}}\right)\\
&\prod_{k=1}^{l}\prod_{i_{k-1}<j<\overline{i_k}}\left(e^{i\beta (B_{T_{j+1}}-B_{T_{j}})+\frac{\beta^2}{2}(T_{j+1}-T_{j})}e^{-\mu e^{-\gamma j}(M_{j}\wedge e^{\frac{\gamma j}{2}})}\right)\nonumber\\
&\prod_{k=1}^{l}\prod_{\overline{i_k}\leq j<i_k}\left(e^{i\beta (B_{T_{j+1}}-B_{T_{j}})+\frac{\beta^2}{2}(T_{j+1}-T_{j})}e^{-\mu e^{-\gamma j}M_{j}}\right)\bigg]\bigg|\nonumber
\end{align}
and with the argument of Proposition~\ref{prop:SmallIndicator} applied to the products of terms with small indicators, we can majorize this by
\begin{equation}
C_0\mathbb{E}\left[\mathbf{1}_{I\leftrightarrow(B,\delta)}\prod_{k=1}^{l}\left(e^{\frac{\beta^2}{2}(T_{i_k+1}-T_{i_k})}\mathbf{1}_{\{M_{i_k}>e^{\frac{\gamma i_k}{2}}\}}\right)\prod_{k=1}^{l}\prod_{\overline{i_k}\leq j<i_k}\left(e^{\frac{\beta^2}{2}(T_{j+1}-T_{j})}\right)\right]
\end{equation}
with $C_0$ locally uniform in $(\alpha,\beta)$, which is
\begin{equation}
C_0\mathbb{E}\left[\mathbf{1}_{I\leftrightarrow(B,\delta)}\prod_{k=1}^{l}\left(e^{\frac{\beta^2}{2}(T_{i_k+1}-T_{i_k})}\mathbf{1}_{\{M_{i_k}>e^{\frac{\gamma i_k}{2}}\}}\right)\prod_{k=1}^{l}\left(e^{\frac{\beta^2}{2}(T_{i_k}-T_{\overline{i_k}})}\right)\right].
\end{equation}

Now we are not far from being able to apply Kahane's decorrelation Lemma~\ref{lem:KahaneDecorrelation}: it suffices to get rid of the last product term in the above expression. By a standard argument in renewal theory (see Appendix~\ref{app:renewal}), we know that
\begin{equation}
\mathbb{E}\left[e^{\frac{\beta^2}{2}(T_{i_k}-T_{\overline{i_k}})}\right]<C
\end{equation}
for some $C$ locally uniform in $(\alpha,\beta)$, so that by Markov property of the Brownian motion one arrives at the following bound
\begin{equation}
\left|\mathbb{E}\left[\mathbf{1}_{I\leftrightarrow(B,\delta)}\sum\limits_{J\in\Phi_{(B,\delta)}^{-1}(I)}h_J\right]\right|\leq C_0\times C^{\#(I)}\mathbb{E}\left[\mathbf{1}_{I\leftrightarrow(B,\delta)}\prod_{k=1}^{l}\left(e^{\frac{\beta^2}{2}(T_{i_k+1}-T_{i_k})}\mathbf{1}_{\{M_{i_k}>e^{\frac{\gamma i_k}{2}}\}}\right)\right]
\end{equation}
which is with the previous notations
\begin{equation}
\left|\mathbb{E}\left[\mathbf{1}_{I\leftrightarrow(B,\delta)}\sum\limits_{J\in\Phi_{(B,\delta)}^{-1}(I)}h_J\right]\right|\leq C_0\times C^{\#(I)}\left|\overline{G^{I}}(\alpha+i\beta;T_N)\right|.
\end{equation}
Summing up over all possibilities $I\subset[|0,N-1|]$ by Equation~\eqref{eq:ReductionSumAnnuli}, we have finally
\begin{equation}
|G(\alpha+i\beta;T_N)|\leq C_0\times\sum\limits_{I\in[|0,N-1|]}C^{\#(I)}\left|\overline{G^{I}}(\alpha+i\beta;T_N)\right|.
\end{equation}
This finishes the proof of Lemma~\ref{lem:RenewalBound}.
\end{proof}

\subsection{Convergence and analyticity of the limit}
In order to prove the analyticity of the limit, it is convenient to consider a slightly modified version of regularization of local Liouville correlation function in the following way:
\begin{equation}
\mathcal{G}(\alpha+i\beta;T_N)=\mathbb{E}\left[e^{(\alpha+i\beta)B_{T_N}-\frac{(\alpha+i\beta)^2}{2}T_N}e^{-\mu\int_{0}^{\infty}e^{\gamma(B_r-Qr)}Z_r dr}\right].
\end{equation}
This function is well-defined and analytic in $(\alpha,\beta)$ for all $N$ as long as $|\beta|<Q-\alpha$.

We will show that $\mathcal{G}$ and $G$ are exponentially close as $N$ goes to infinity. More precisely:
\begin{lemm}[Modified regularization of local Liouville correlation function]\label{lem:ModifiedG}
We claim the following properties on $\mathcal{G}(\alpha+i\beta;T_N)$ and $G(\alpha+i\beta;T_N)$ in the region
\begin{equation}
\mathcal{R}^{T}_{loc}=\mathcal{R}_{loc}\cap\{\alpha<Q-\frac{\gamma}{2}\}.
\end{equation}
\begin{enumerate}
	\item There exists some constants $C,\eta>0$ locally uniform in $(\alpha,\beta)$ such that
	\begin{equation}
	\left|\mathcal{G}(\alpha+i\beta;T_N)-G(\alpha+i\beta;T_N)\right|\leq Ce^{-\eta N};
	\end{equation}
	\item There exists some constants $C,\eta>0$ locally uniform in $(\alpha,\beta)$ such that
	\begin{equation}
	\left|G(\alpha+i\beta;T_{N+1})-G(\alpha+i\beta;T_N)\right|\leq Ce^{-\eta N};
	\end{equation}
	\item There exists some constants $C,\eta>0$ locally uniform in $(\alpha,\beta)$ such that
	\begin{equation}
	\left|\mathcal{G}(\alpha+i\beta;T_{N+1})-\mathcal{G}(\alpha+i\beta;T_N)\right|\leq Ce^{-\eta N}.
	\end{equation}
\end{enumerate}
\end{lemm}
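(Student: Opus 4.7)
The plan is to base the proof of all three claims on a single key formula relating $\mathcal{G}$ to $G$. By combining a Cameron--Martin shift $B\mapsto B+\alpha\cdot$ on $[0,T_N]$ with the strong Markov property at $T_N$, the real Girsanov density $e^{\alpha B_{T_N}-\alpha^2 T_N/2}$ will absorb the real part of the exponential prefactor and convert the integrand $e^{\gamma(B_r-Qr)}$ into $e^{\gamma(B_r-(Q-\alpha)r)}$ on $[0,T_N]$, while the tail integral $\int_{T_N}^\infty$ factors off by strong Markov and inherits the exponential factor $e^{\gamma(B_{T_N}-(Q-\alpha)T_N)}=e^{-\gamma N}$. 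The outcome should be
\begin{equation*}
\mathcal{G}(\alpha+i\beta;T_N) = \mathbb{E}\left[e^{i\beta B_{T_N}+\frac{\beta^2}{2}T_N}\,e^{-\mu\int_0^{T_N}e^{\gamma(B_r-(Q-\alpha)r)}Z_r dr}\,e^{-\mu e^{-\gamma N}\widehat{I}_{T_N}}\right],
\end{equation*}
where $\widehat{I}_{T_N}=\int_0^\infty e^{\gamma(\widehat{B}_u-Qu)}Z_{u+T_N}du$ and $\widehat{B}$ is a Brownian motion independent of $\mathcal{F}_{T_N}$. All the exponential decay in $N$ will flow from this $e^{-\gamma N}$.

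For claim (1), subtracting $G(\alpha+i\beta;T_N)$ from the identity above and using $|e^{-x}-1|\le x$ reduces matters to bounding $\mu e^{-\gamma N}\,\mathbb{E}\bigl[e^{\beta^2 T_N/2}\,\Xi'_{T_N}\,\widehat{I}_{T_N}\bigr]$ with $\Xi'_{T_N}=e^{-\mu\int_0^{T_N}e^{\gamma(B_r-(Q-\alpha)r)}Z_r dr}\le 1$. Exponential moments of $T_N$ are finite by $|\beta|<Q-\alpha$ (Proposition~\ref{prop:renewalprop}), $\widehat{I}_{T_N}$ has moments of order slightly larger than $1$ via the generalized Seiberg bound (as in Proposition~\ref{prop:StoppingLocal}), and the cancellations in $\Xi'_{T_N}$ are extracted using the Kahane + renewal machinery from the proof of Lemma~\ref{lem:Boundedness}. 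For claim (2), a parallel application of the strong Markov property at $T_N$ to $G$ itself should give, with $\widehat{T}_1=T_{N+1}-T_N$ and $\widehat{J}$ a truncated GMC integral over $[T_N,T_{N+1}]$,
\begin{equation*}
G(T_{N+1})-G(T_N) = \mathbb{E}\left[e^{i\beta B_{T_N}+\frac{\beta^2}{2}T_N}\,\Xi'_{T_N}\bigl(e^{i\beta\widehat{B}_{\widehat{T}_1}+\frac{\beta^2}{2}\widehat{T}_1}\,e^{-\mu e^{-\gamma N}\widehat{J}}-1\bigr)\right].
\end{equation*}
Expanding the inner bracket and using that $(\widehat{B}_{\widehat{T}_1},\widehat{T}_1)$ is independent of $\mathcal{F}_{T_N}$ will split the right-hand side into a main term proportional to $\mathbb{E}\bigl[e^{i\beta B_{T_1}+\beta^2 T_1/2}\bigr]-1$ plus an $O(e^{-\gamma N})$ remainder. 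The crucial fact is that the main term \emph{vanishes}: plugging $s=i\beta(Q-\alpha)+\beta^2/2$ into the inverse Gaussian MGF $\mathbb{E}[e^{sT_1}]=e^{(Q-\alpha)-\sqrt{(Q-\alpha)^2-2s}}$ and observing that $(Q-\alpha)^2-2s=(Q-\alpha-i\beta)^2$ has principal square root $Q-\alpha-i\beta$ (since $\operatorname{Re}(Q-\alpha-i\beta)=Q-\alpha>0$),
\begin{equation*}
\mathbb{E}\bigl[e^{i\beta B_{T_1}+\frac{\beta^2}{2}T_1}\bigr] = e^{-i\beta}\,e^{(Q-\alpha)-(Q-\alpha-i\beta)} = 1
\end{equation*}
for every $|\beta|<Q-\alpha$. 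This is the optional-stopping identity at $T_1$ for the purely imaginary exponential; in contrast to the full complex exponential $e^{(\alpha+i\beta)B_t-(\alpha+i\beta)^2 t/2}$, whose expectation at $T_1$ would equal $e^{2(Q-2\alpha)-2i\beta}\neq 1$ as soon as $\alpha>Q/2$, this weaker identity remains valid throughout $\mathcal{R}^{T}_{loc}$. The $O(e^{-\gamma N})$ remainder is then controlled exactly as in claim (1), and claim (3) follows from (1) and (2) by the triangle inequality.

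The main technical obstacle is the uniform control of the nonoscillatory weight $\mathbb{E}\bigl[e^{\beta^2 T_N/2}\,\Xi'_{T_N}\bigr]$ that shows up in all three estimates: dropping $\Xi'_{T_N}\le 1$ gives a bound growing like $e^{[(Q-\alpha)-\sqrt{(Q-\alpha)^2-\beta^2}]N}$, which can exceed $e^{\gamma N}$ on a nonempty open subregion of $\mathcal{R}^{T}_{loc}$. One therefore cannot afford to discard $\Xi'_{T_N}$ and must instead recycle the Kahane--Slepian decorrelation together with the renewal cutting-annuli decomposition from the proof of Lemma~\ref{lem:Boundedness}, applied this time to the real weight $e^{\beta^2 T_N/2}\Xi'_{T_N}$ rather than to $|G|$ itself.
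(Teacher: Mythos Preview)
Your optional-stopping identity $\mathbb{E}\bigl[e^{i\beta B_{T_1}+\frac{\beta^2}{2}T_1}\bigr]=1$ is correct and is in fact the hidden mechanism that makes the cutting-annuli machinery of Lemma~\ref{lem:Boundedness} work: it is exactly what allows the factors with indices $i\in J\setminus K$ in the proof of Proposition~\ref{prop:SmallIndicator} to drop out. But your plan to invoke it only once, for the ``main term'' of $G(T_{N+1})-G(T_N)$, and then to bound the remainder after taking absolute values inside, does not go through.

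Once you use $|e^{-x}-1|\le x$ and strip the phase $e^{i\beta B_{T_N}}$, you are asking for a uniform bound on $\mathbb{E}\bigl[e^{\frac{\beta^2}{2}T_N}\,\Xi'_{T_N}\,\widehat I_{T_N}\bigr]$. No refinement of the Kahane--renewal argument can deliver this, because the quantity itself diverges: almost surely $\Xi'_{T_N}\downarrow\Xi'_\infty=\exp\bigl(-\mu\int_0^\infty e^{\gamma(B_r-(Q-\alpha)r)}Z_r\,dr\bigr)>0$ while $e^{\frac{\beta^2}{2}T_N}\to\infty$, so Fatou gives $\mathbb{E}\bigl[e^{\frac{\beta^2}{2}T_N}\Xi'_{T_N}\bigr]\to\infty$ for every $\beta\ne 0$. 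The machinery of Lemma~\ref{lem:Boundedness} bounds $|G(T_N)|$, not this real weight, precisely because at the small-indicator step each index $i\notin K$ contributes the \emph{complex} factor $e^{i\beta(B_{T_{i+1}}-B_{T_i})+\frac{\beta^2}{2}(T_{i+1}-T_i)}$, whose conditional expectation equals $1$ by your own identity; without the oscillation that factor becomes $\mathbb{E}\bigl[e^{\frac{\beta^2}{2}(T_{i+1}-T_i)}\bigr]>1$, and the product over $i\notin K$ reinstates exactly the exponential growth in $N$ you were trying to avoid. (A secondary issue: the Girsanov formula you write for $\mathcal G$ is not correct as stated --- after a Cameron--Martin shift on $[0,T_N]$ the stopping time becomes the hitting time of $-N$ for $B_t-(Q-2\alpha)t$, not for $B_t-(Q-\alpha)t$ --- but this is not the essential obstruction.)

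The paper's proof avoids the trap by never dropping the oscillation. For claim~(2) it keeps the full product
\[
G(T_{N+1})-G(T_N)=\mathbb{E}\Biggl[\prod_{i=0}^{N-1}\Bigl(e^{i\beta(B_{T_{i+1}}-B_{T_i})+\frac{\beta^2}{2}(T_{i+1}-T_i)}e^{-\mu e^{-\gamma i}M_i}\Bigr)\, e^{i\beta(B_{T_{N+1}}-B_{T_N})+\frac{\beta^2}{2}(T_{N+1}-T_N)}\bigl(1-e^{-\mu e^{-\gamma N}M_N}\bigr)\Biggr],
\]
splits the last parenthesis with the indicator $\mathbf{1}_{\{M_N>e^{\gamma N/2}\}}$, and reruns the \emph{entire} indicator/cutting-annuli analysis of Lemmas~\ref{lem:KahaneDecorrelation}--\ref{lem:RenewalBound} on the $(N{+}1)$-fold product with all phases intact. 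The decay in $N$ then comes for free: on the small-indicator side the extra factor is at most $\mu e^{-\gamma N/2}$, and on the large-indicator side the cutting annulus is forced to contain the index $N$, contributing $e^{-\gamma N/(4q)}$. Claim~(1) is argued the same way with the segment $[T_N,T_{N+1}]$ replaced by $[T_N,\infty)$ and drift $-Q$ on the tail, and~(3) follows from~(1) and~(2).
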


Let us first show how this can be used to finish the proof of Theorem~\ref{th:LocalStoppingTime}.
\begin{proof}[Proof of Theorem~\ref{th:LocalStoppingTime}]
By property (3) in Lemma~\ref{lem:ModifiedG}, local uniform convergence of $\mathcal{G}(\alpha+i\beta;T_N)$ yields that the limit of $\mathcal{G}(\alpha+i\beta;T_N)$ as $N$ goes to infinity is analytic in $(\alpha,\beta)$ in the region
\begin{equation}
\mathcal{R}^{T}_{loc}=\mathcal{R}_{loc}\cap\{\alpha<Q-\frac{\gamma}{2}\}.
\end{equation}

Furthurmore, by (1) of Lemma~\ref{lem:ModifiedG}, the limit is the same as the limit of $G(\alpha+i\beta;T_N)$ when $N$ goes to infinity. This proves that the limit
\begin{equation}
G^{T}(\alpha+i\beta)\coloneqq \lim_{N\to\infty}G(\alpha+i\beta;T_N)
\end{equation}
is well-defined and analytic in $\mathcal{R}^{T}_{loc}$.
\end{proof}

We now study Lemma~\ref{lem:ModifiedG}.
\begin{proof}[Proof of Lemma~\ref{lem:ModifiedG}]
The proof uses the same techniques as in the boundedness arguments.

Let us start with (2). We apply the same technique as before using the cutting annuli. The proof is almost identical with Lemma~\ref{lem:RenewalBound}, except that the set of cutting annuli over which we sum must contain the time segment with indice $N$, which gives rise to an exponential factor $e^{-\frac{N\gamma}{4q}}$.

More precisely, let us consider
\begin{align}
&G(\alpha+i\beta;T_{N+1})-G(\alpha+i\beta;T_N)\\
={}&\mathbb{E}\left[\prod_{i=0}^{N-1}\left(e^{i\beta (B_{T_{i+1}}-B_{T_{i}})+\frac{\beta^2}{2}(T_{i+1}-T_{i})}e^{-\mu e^{-\gamma i}M_{i}}\right)\times e^{i\beta (B_{T_{N+1}}-B_{T_{N}})+\frac{\beta^2}{2}(T_{N+1}-T_{N})}\left(1-e^{-\mu e^{-\gamma N}M_{N}}\right)\right]\nonumber.
\end{align}

We now apply the same argument with the indicators: in particular we add the indicator
\begin{equation}
e^{-\mu e^{-\gamma N}M_{N}}=e^{-\mu e^{-\gamma N}(M_N\wedge e^{\frac{\gamma N}{2}})}+\left(e^{-\mu e^{-\gamma N}M_N}-e^{-\mu e^{-\frac{\gamma N}{2}}}\right)\mathbf{1}_{\{M_N> e^{\frac{\gamma N}{2}}\}}
\end{equation}
to control the term with $M_N$.

For the part with the small indicator, using the same method as in boundedness Lemma~\ref{lem:Boundedness} we obtain the bound
\begin{equation}
Ce^{-\frac{\gamma N}{2}}
\end{equation}
which is of the desired form.

Now we control the part with the other indicator. That is
\begin{equation}
\left|\mathbb{E}\left[\prod_{i=0}^{N-1}\left(e^{i\beta (B_{T_{i+1}}-B_{T_{i}})+\frac{\beta^2}{2}(T_{i+1}-T_{i})}e^{-\mu e^{-\gamma i}M_{i}}\right)\mathbf{1}_{\{M_N> e^{\frac{\gamma N}{2}}\}}\left(e^{-\mu e^{-\gamma N}M_N}-e^{-\mu e^{-\frac{\gamma N}{2}}}\right)\right]\right|.
\end{equation}
This quantity has been studied before using cutting annuli, it contributes at most
\begin{equation}
Ce^{-\frac{\gamma N}{4q}},
\end{equation}
thus the proof of claim (2).

For (1), the proof is similar to (2) except that we replace the time segment $[T_N,T_{N+1}]$ by $[T_N,\infty]$, on which the drift is $-Q$ instead of $-(Q-\alpha)$. However, one can check that the rest of the proof goes in the same way as in (2).

Finally, (3) is a direct consequence of (1) and (2).
\end{proof}

\begin{rema}[Extention of stopping time method]
One can ask if we can define probabilistically the analytic continuation of local Liouville correlation function to $\mathcal{R}_{loc}$ only using this stopping time method. It is indeed possible modulo a slight modification by looking at lower-than-1 fractional moment of the GMC measure for real $\alpha$ in the disk $\mathbb{D}$. By the study of the generalized Seiberg bound (see \cite{david2016liouville,huang2018liouville}) some small moments of this GMC measure always exist as long as $\alpha<Q$, and one can adapt easily appropriately elements in the above proof to this case. Nonetheless, the nature of the stopping time method yields convergence in a somewhat weaker sense than the martingale method, although their limits coincide and yield the correct analytic continuation of (local) Liouville correlation function in $\mathcal{R}^{T}_{loc}\cap\mathcal{R}^{M}_{loc}$.
\end{rema}

\section{Proof of the main theorem}\label{sec:ProofGeneral}
To pass from the local versions Theorem~\ref{th:LocalMartingale} and Theorem~\ref{th:LocalStoppingTime} to the main Theorem~\ref{th:MainTheorem}, one uses the independence property of radial decomposition Remark~\ref{rem:Independence}. Recall that we are interested in studying the limit as $\mathbf{t}$ goes to infinity of
\begin{equation}
G(\bm{\alpha+i\beta},\mathbf{z};\mathbf{t})\coloneqq\mathbb{E}\left[\prod_{j=1}^{n}e^{(\alpha_j+i\beta_j)X_{r_j}(z_j)-\frac{(\alpha_j+i\beta_j)^2}{2}t_j}M_{\gamma}(C_t)^{-s}\right]
\end{equation}
where $r_j=e^{-t_j}$, $s=\frac{\sum_j(\alpha_j+i\beta_j)-2Q}{\gamma}$ and $C_t=\mathbb{C}-B(0,e^{-t})$. Remember that under the Seiberg bound, we have 
$\alpha_j<Q$ for all $j$ and $\Re(s)>0$.

Without lost of generosity, we can suppose $z_1=0$ and $|z_j|\geq 2$ for $j\neq 1$: one can get this configuration by applying a deterministic conformal map if necessary. Now we fix all $t_j$ for $j\neq 1$ and study the limit as $t_1$ goes to infinity: we write
\begin{equation}
M_\gamma(\mathbb{C})=M_\gamma(\mathbb{D})+M_\gamma(\mathbb{C}\backslash\mathbb{D})
\end{equation}
and remark that the second summand is independent of $\{X_{r_1}(0)\}_{r_1\leq 1}$. In view of Remark~\ref{rem:Independence}, one verifies that all the calculations in the local case can be done in this general case (since other Brownian motions are also independent of $\{X_{r_1}(0)\}_{r_1\leq 1}$ and do not enter in the calculation) as soon as $\Re(s)>0$ (which implies $\Re(s)+1>0$). This is similar to the local case with some technical modifications, we provide a detailed sketch of proof in Appendix~\ref{app:NonLocal}.

Thus, the limit when $t_1$ goes to infinity when $\{|\beta_1|<Q-\alpha_1\}$ is well-defined and analytic. One can successively apply the same procedure to $t_2,\dots,t_n$ and define the limit function $G(\bm{\alpha+i\beta};\mathbf{z})$ in this manner. This limit is analytic in the region $\mathcal{R}$ since it is the uniform local limit of analytic functions by Hartog's theorem: this completes our proof of Theorem~\ref{th:MainTheorem}.

\begin{rema}[Beyond the region $\mathcal{R}$]
It is natural to study the same question beyond the region $\mathcal{R}$. In view of the DOZZ formula \cite{dorn1992correlation,zamolodchikov1996conformal,kupiainen2017integrability}, it is reasonable to believe that the $n$-point correlation function should admit an analytic continuation to the whole complex set $\mathbb{C}^n$ as a meromorphic function, i.e. analytic modulo some poles.

It seems however that our approaches are limited to the region $\mathcal{R}$. With the martingale method, this limitation stems from the fact that, although we use a strong freezing estimate for the real part $\alpha$, our control on the complex argument part is still poor. Indeed, we always perform at some stage of our proof the following crude estimate
\begin{equation}
\forall t\in\mathbb{R}_+,\quad \left|e^{i\beta B_t}\right|\leq 1
\end{equation}
which does not take into account of the angular compensation between different phases of $B_t$. The freezing estimate for the real part translates to the condition $|\beta|<Q-\alpha$.

In the same way, with the stopping-time method, we need the condition
\begin{equation}
\mathbb{E}\left[e^{\frac{\beta^2}{2}T_1}\right]<\infty
\end{equation}
which translates again to $|\beta|<Q-\alpha$.

It is curious that both methods yield the same constraint and indicate that the region $\mathcal{R}$ is the maximal set on which analytic continuation of Liouville correlation function is possible with the current techniques for attacking this problem. It seems that at this stage that a rigorous justification for a purely probabilistic definition of the analytic continuation of $n$-point Liouville correlation functions on the Riemann sphere beyond the region $\mathcal{R}$, using directly the original approach of \cite{david2016liouville}, remains a mathematical challenge.
\end{rema}

\appendix
\section{Some facts on renewal process}\label{app:renewal}
We follow notations from Proposition~\ref{prop:renewalprop} and we use the language of renewal process. Given $t>0$, define the renewal process
\begin{equation}
N(t)=\sup\{n : T_n<t\}
\end{equation}
and the renewal function
\begin{equation}
m(t)=\mathbb{E}[N(t)].
\end{equation}

It is a classical fact that the residual time $R_{T}(t)$ at time $t$ admits the following probability density expression:
\begin{equation}
\phi(x,t)=f(t+x)+\int_{0}^{t}f(x+u)m'(t-u)du
\end{equation}
where $f_{T_1}(x)$ is the probability density of the variable $T_1$, given in Equation~\eqref{eq:T1density}. To show the last claim of Proposition~\ref{prop:renewalprop} we study the tail of $\phi(x,t)$ as $x$ goes to infinity. Since for large $x$, the density function $f(x)$ is decreasing, we have
\begin{equation}
\phi(x,t)=f(t+x)+\int_{0}^{t}f(x+u)m'(t-u)du\leq f(x)+f(x)\int_{0}^{t}m'(t-u)du\leq Cf(x)
\end{equation}
in such a way that for every $t$, $\phi(x,t)$ has the same sub-exponential tail as $f$. Since
\begin{equation}
\mathbb{E}\left[e^{\frac{\beta^2}{2}T_1}\right]<\infty
\end{equation}
as long as $|\beta|<Q-\alpha$, we have that for all $t>0$, if $|\beta|<Q-\alpha$ then
\begin{equation}
\mathbb{E}\left[e^{\frac{\beta^2}{2}R_{T}(t)}\right]<\infty.
\end{equation}

\section{Generalized freezing estimate}\label{app:freezing}
\begin{lemm}[Generalized freezing estimate]\label{lem:GeneralFreezing}
For $\sum\limits_{i}\alpha_i>Q$, $x_i\in B(0,\epsilon)$ and $\mu>0$,
\begin{equation}
\mathbb{E}\left[\exp\left(-\mu\int_{|x|>\epsilon}\prod_{i}\frac{1}{|x-x_i|^{\gamma\alpha_i}}M_\gamma(d^2x)\right)\right]\leq C\epsilon^{\frac{1}{2}(\sum\limits_{i}\alpha_i-Q)^2}
\end{equation}
with the constant $C$ independent of $\epsilon$ when $\epsilon$ is small enough.
\end{lemm}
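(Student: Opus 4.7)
The plan is to reduce the generalized estimate directly to the single-insertion freezing estimate of Lemma~\ref{lem:Freezing} by a pointwise comparison of integrands. The key observation is geometric: since every insertion $x_i$ lies in $B(0,\epsilon)$ while the integration is over $|x|>\epsilon$, the triangle inequality
\begin{equation*}
|x-x_i| \leq |x|+|x_i| \leq 2|x|
\end{equation*}
holds uniformly on the integration domain. Taking negative powers, which reverses the inequality when $\alpha_i>0$ as in the paper's applications, and multiplying over $i$, I would obtain the clean pointwise lower bound
\begin{equation*}
\prod_i\frac{1}{|x-x_i|^{\gamma\alpha_i}} \;\geq\; 2^{-\gamma\sum_i\alpha_i}\,\frac{1}{|x|^{\gamma\sum_i\alpha_i}}.
\end{equation*}

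Next, I would integrate both sides against the positive random measure $M_\gamma(d^2x)$, multiply by $-\mu<0$ (which reverses the inequality), and apply the monotone function $\exp$. This produces a majorizing random variable of precisely the form handled by Lemma~\ref{lem:Freezing}, with effective exponent $\alpha = \sum_i\alpha_i > Q$ and effective mass $\mu' = \mu\cdot 2^{-\gamma\sum_i\alpha_i} > 0$. Taking expectations then yields the desired upper bound $C\epsilon^{\frac{1}{2}(\sum_i\alpha_i-Q)^2}$, with $C$ remaining locally uniform in the $\alpha_i$ as long as these parameters stay in a bounded set, since the multiplicative constant from the pointwise bound depends continuously on them.

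There is no substantive obstacle: the lemma is essentially a direct corollary of Lemma~\ref{lem:Freezing} once the pointwise comparison is in place, so I would expect the proof to be only a few lines. The only fine point worth flagging is the implicit positivity assumption on the $\alpha_i$, which is automatic in all invocations of this lemma in the present paper (notably in the proof of Proposition~\ref{prop:MartingaleLocal}, where the exponents are $\alpha$ and $\gamma$, both positive). If one ever needed the statement for some negative $\alpha_i$, the pointwise bound would instead have to be derived from $|x-x_i| \geq |x|/2$, valid on $|x|\geq 2\epsilon$, together with a separate elementary treatment of the thin annulus $\{\epsilon<|x|<2\epsilon\}$ using the boundedness of the exponential term; this would contribute only subleading corrections and would not affect the stated power of $\epsilon$.
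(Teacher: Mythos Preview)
Your proposal is correct and follows essentially the same route as the paper's proof: both use the pointwise bound $|x-x_i|\leq 2|x|$ on the integration domain to reduce the multi-insertion integrand to the single-insertion one with exponent $\alpha=\sum_i\alpha_i$, and then invoke the classical freezing estimate. The paper additionally restricts to $2\epsilon<|x|<1$ and prefaces with a Kahane reduction to the exact-log kernel, but these are cosmetic differences; your direct appeal to Lemma~\ref{lem:Freezing} with effective mass $\mu'=2^{-\gamma\sum_i\alpha_i}\mu$ is equally valid, and your remark on the implicit positivity of the $\alpha_i$ is well observed.
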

\begin{proof}
Without loss of generosity let us suppose $X$ is correlated as
\begin{equation}
\mathbb{E}[X(x)X(y)]=\ln\frac{1}{|x-y|}
\end{equation}
in the unit disk $x,y\in\mathbb{D}$, since other cases can be reduced to this one by Kahane's inequality Lemma~\ref{lem:KahaneConvexity}. It is sufficient to prove the following statement:
\begin{equation}
\mathbb{E}\left[\exp\left(-\mu\int_{2\epsilon<|x|<1}\prod_{i}\frac{1}{|x-x_i|^{\gamma\alpha_i}}M_\gamma(d^2x)\right)\right]\leq C\epsilon^{\frac{1}{2}(\sum\limits_{i}\alpha_i-Q)^2}
\end{equation}
Since for $2\epsilon<|x|<1$ and $|x_i|<\epsilon$,
\begin{equation}
|x-x_i|\leq 2|x|,
\end{equation}
it is sufficient to prove with $\alpha=\sum\limits_{i}\alpha_i$,
\begin{equation}
\mathbb{E}\left[\exp\left(-\mu\int_{2\epsilon<|x|<1}\frac{1}{|x|^{\gamma\alpha}}M_\gamma(d^2x)\right)\right]\leq C\epsilon^{\frac{1}{2}(\alpha-Q)^2}.
\end{equation}
This is the case for the classical freezing estimate.
\end{proof}

\section{Estimates and proof in the non-local case}\label{app:NonLocal}
Following notations in Section~\ref{sec:ProofGeneral}, consider
\begin{equation}\label{eq:ZeroModeIntegral}
G(\bm{\alpha+i\beta},\mathbf{z};\mathbf{t})\coloneqq\mathbb{E}\left[\prod_{j=1}^{n}e^{(\alpha_j+i\beta_j)X_{r_j}(z_j)-\frac{(\alpha_j+i\beta_j)^2}{2}t_j}M_{\gamma}(C_t)^{-s}\right]
\end{equation}
where we fix all components $t_2,\dots,t_n$ and vary the component $t_1$. Remember that
\begin{equation}
s=\frac{\sum_{j=1}^{n}(\alpha_j+i\beta_j)-2Q}{\gamma}
\end{equation}
in such a way that $\Re(s)>0$ since we suppose the Seiberg bound. For simplicity we also suppose $z_1=0$ and $|z_i|>2$ for $i\neq 1$.

\subsection{Martingale method}
Here the assumptions on the component $t_1$ are
\begin{equation}
\alpha_1>Q-\gamma;\quad |\beta_1|<Q-\alpha_1.
\end{equation}

The martingale method works in similar manner as in Section~\ref{sec:MartingaleLocal}. Namely, we apply Itô calculus to the complex martingale
\begin{equation}
e^{(\alpha_1+i\beta_1)X_{r_1}-\frac{(\alpha_1+i\beta_1)^2}{2}t_1}
\end{equation}
we obtain, similarly to Equation~\eqref{eq:MartingaleIto},
\begin{equation}
\left|\frac{\partial G}{\partial t_1}\right|\leq\frac{\Re(s)\prod_{j=2}^{n}\left(e^{\frac{\beta_j^2}{2}t_j}\right)}{2\pi}\int_{0}^{2\pi}e^{(\gamma\alpha-2)t_1}\mathbb{E}\left[e^{\frac{\beta^2}{2}t_1}\left(\int_{C_{\{t_2,\dots,t_n\}}\backslash B(0,e^{-t_1})}\frac{1}{|x-e^{-t_1}e^{i\theta}|^{\gamma^2}}\frac{M_\gamma(d^2x)}{|x|^{\gamma\alpha}}\right)^{-\Re(s)-1}\right]d\theta
\end{equation}
where we denoted by $B_{t_1}$ the Brownian motion $X_{r_1}$. Since the (generalized) freezing estimate is also valid for negative moments (and it is local estimate around $0$), we can finish the proof in the same way as in Section~\ref{sec:MartingaleLocal}.

\subsection{Stopping time method}
Here the assumptions on the component $t_1$ are
\begin{equation}
\alpha_1<Q-\frac{\gamma}{2};\quad |\beta_1|<Q-\alpha_1.
\end{equation}

It is more convenient to go back to the original definition of the Liouville correlation function in this case. Consider the (regularized) integral over the zero-mode $c$,
\begin{equation}
C(\bm{\alpha+i\beta},\mathbf{z};\mathbf{t})\coloneqq\mathbb{E}\left[\int_{\mathbb{R}}e^{(\sum_j(\alpha_j+i\beta_j)-2Q)c}\prod_{j}e^{(\alpha_j+i\beta_j)X(z_j)}e^{-\mu e^{\gamma c}\int_{C_{\mathbf{t}}}e^{\gamma X}}dc\right]
\end{equation}
where $X$ is the GFF in the metric $\mathbf{g}$ as defined in Section~\ref{sec:GFFSetup}.

We fix all other components $t_2,\dots,t_n$ and perform Girsanov on the real part $\alpha_1$ of the component $t_1$. Using the same notation as before, we can write the regularized correlation function $C$ as
\begin{equation}
\int_{\mathbb{R}}e^{\left(\sum\limits_{j=1}^{n}(\alpha_j+i\beta_j)-2Q\right)c}\mathbb{E}\left[\prod_{j=2}^{n}e^{(\alpha_j+i\beta_j)X(z_j)}e^{-\mu e^{\gamma c}M_{\gamma}(C'_{\mathbf{t}})}\prod_{i=0}^{N-1}\left(e^{i\beta(B_{T_{i+1}}-B_{T_i})+\frac{\beta^2}{2}(T_{i+1}-T_{i})}e^{-\mu e^{\gamma c}e^{-\gamma i}M_i}\right)\right]dc
\end{equation}
with $C'_{\mathbf{t}}=\mathbb{C}\backslash\mathbb{D}-\bigcup\limits_{j=2}^{n}B(z_j,e^{-t_j})$ and $M_\gamma$ the GMC measure associated to $X$. The Brownian motion $B$, the stopping times $T$ and the measures $M$ are the same as in Section~\ref{sec:StoppingLocal}, defined with respect to the component $t_1$.

Our goal here is to prove that the whole integral decays at exponential speed (with respect to $N$) with coefficients locally uniform in $(\alpha,\beta)$ in the stopping time region for the component $t_1$. For this we adapt the proof in the local case with more careful control of the coefficients in the estimates. In particular, we study how the coefficients vary with respect to the zero-mode $c$: in other words, we study more in detail the proof of Lemma~\ref{lem:ModifiedG} with the parameter $c$.

In the following $\Omega$ will denote some non-empty open ball of $C'_{\mathbf{t}}$ such that the correlation of $X$ in $\Omega$ and in $\mathbb{D}$ is smaller than $\epsilon$: one verifies that this is always possible using the covariance kernel in Section~\ref{sec:GFFSetup}. We study the finite difference of the function
\begin{equation}
H(c,N)\coloneqq\mathbb{E}\left[\prod_{j=2}^{n}e^{(\alpha_j+i\beta_j)X(z_j)}e^{-\mu e^{\gamma c}M_{\gamma}(C'_{\mathbf{t}})}\prod_{i=0}^{N-1}\left(e^{i\beta(B_{T_{i+1}}-B_{T_i})+\frac{\beta^2}{2}(T_{i+1}-T_{i})}e^{-\mu e^{\gamma c}e^{-\gamma i}M_i}\right)\right]
\end{equation}
as $N$ goes to infinity. Our goal is to obtain an upper bound of the form
\begin{equation}
\int_{\mathbb{R}}e^{\left(\sum\limits_{j=1}^{n}\alpha_j-2Q\right)c}|H(c,N)-H(c,N+1)|dc\leq Ce^{-\eta N}
\end{equation}
for some $\eta>0$ and $C$ locally uniform in $(\alpha,\beta)$, independent of other parameters. This estimate yields exponential convergence of the function $C(\bm{\alpha+i\beta},\mathbf{z};\mathbf{t})$ along the stopping time sequence, i.e. with respect to $N$, and proves the analyticity of the limit in $t_1$. The main question is whether we can have a such a constant $C$ after the integration over the zero-mode.

To this end, we look at
\begin{equation}
H(c,N)-H(c,N+1)
\end{equation}
and by using independence properties of $B$, write it as
\begin{equation}
\mathbb{E}\left[\prod_{j=2}^{n}e^{(\alpha_j+i\beta_j)X(z_j)}e^{-\mu e^{\gamma c}M_{\gamma}(C'_{\mathbf{t}})}\prod_{i=0}^{N-1}\left(\cdots\right)\left(e^{i\beta(B_{T_{N+1}}-B_{T_N})+\frac{\beta^2}{2}(T_{N+1}-T_{N})}\left(1-e^{-\mu e^{\gamma c}e^{-\gamma N}M_N}\right)\right)\right].
\end{equation}
As before, we will add indicators and expand the product. Namely, write for all $0\leq i\leq N$,
\begin{equation}
e^{-\mu e^{\gamma c}e^{-\gamma i}M_i}=e^{-\mu e^{\gamma c}e^{-\gamma i}(M_i\wedge e^{\frac{\gamma i}{2}})}+\left(e^{-\mu e^{\gamma c}e^{-\gamma i}M_i}-e^{-\mu e^{\gamma c}e^{-\frac{\gamma i}{2}}}\right)\mathbf{1}_{\{M_i>e^{\frac{\gamma i}{2}}\}}.
\end{equation}
We sketch now how the bounds will change with respect to $c$: we will use the same notations as in the local case.

Remember that in the expansion, we will have two parts after the cutting annuli reduction: the part with large indicators where the upper bound comes from the term
\begin{equation}
\mathbf{1}_{\{M_i>e^{\frac{\gamma i}{2}}\}}
\end{equation}
which we bound in absolute value using Kahane's decorrelation inequality; and the part with small indicators where the upper bound comes from the term
\begin{equation}
1-e^{-\mu e^{\gamma c}e^{-\gamma i}(M_i\wedge e^{\frac{\gamma i}{2}})}
\end{equation}
which we also bound in absolute value using Taylor expansion.

Let us now suppose that the large indicators are taken on a $(B,\delta)$-cutting annuli $I\subset[0,N]$.

\emph{Large indicators.}
Under the assumption on $\Omega$, the same decorrelation applies using Kahane-Slepian's inequality. Following the same notations as in the proof of Lemma~\ref{lem:KahaneDecorrelation}), consider
\begin{equation}
\overline{H^{I}}(c;T_N)=\mathbb{E}\left[\prod_{j=2}^{n}e^{(\alpha_j+i\beta_j)X(z_j)}\times e^{-\mu e^{\gamma c}M_{\gamma}(\Omega)}\mathbf{1}_{I\leftrightarrow (B,\delta)}\prod_{i\in I}\left(e^{\frac{\beta^2}{2}(T_{i+1}-T_{i})}\mathbf{1}_{\{M_i>e^{\frac{\gamma i}{2}}\}}\right)\right].
\end{equation}
We first write with Hölder inequality (with same $(p,q)$ as in Lemma~\ref{lem:KahaneDecorrelation}):
\begin{equation}
\overline{H^{I}}(c;T_N)\leq \prod_{j=2}^{n}e^{\frac{\beta^2}{2}t_j}\times C^{\#(I)}\mathbb{E}\left[e^{-p\mu e^{\gamma c}M_{\gamma}(\Omega)}\right]^{\frac{1}{p}}\mathbb{E}\left[\mathbf{1}_{I\leftrightarrow (B,\delta)}\prod_{i\in I}\mathbf{1}_{\{M_i>e^{\frac{\gamma i}{2}}\}}\right]^{\frac{1}{q}}.
\end{equation}
Performing the same argument as in Lemma~\ref{lem:KahaneDecorrelation} we arrive at the same upper bound estimate (remember that this part does not depend on $\mu$ or $c$) with an additional factor
\begin{equation}
\mathbb{E}\left[e^{-p\mu e^{\gamma c}M_{\gamma}(\Omega)}\right]^{\frac{1}{p}}.
\end{equation}

\emph{Small indicators.}
By Corollary~\ref{cor:SmallIndicatorMu}, we can bound the small indicators contribution by
\begin{equation}
C(\mu)\left(1+C^{\frac{c}{2}}\right)
\end{equation}
if it does not contain the indice $N$, and by
\begin{equation}
C(\mu)\left(1+C^{\frac{c}{2}}\right)\times e^{\gamma c}e^{-\frac{\gamma N}{2}}
\end{equation}
if it contains the indice $N$, for some finite constant $C(\mu)$ independent of $c$ and $N$.

\emph{Zero-mode integration.}
From the discussion above, for each $c$, the bound changes in the large indicator estimate by
\begin{equation}
\mathbb{E}\left[e^{-p\mu e^{\gamma c}M_{\gamma}(\Omega)}\right]^{\frac{1}{p}}
\end{equation}
and in the small indicator estimate, on the scale of at most
\begin{equation}
e^{(\gamma+\sigma)c}
\end{equation}
for large positive $c$ and some small positive deterministic $\sigma$.

Now we have to make sure that integrating over $c$, these factors are still bounded by some constant. Hence we look at, with $p>1$,
\begin{equation}
\int_{\mathbb{R}}e^{\left(\frac{\sum_{j=1}^{n}\alpha_j-2Q}{\gamma}\right)c}e^{(\gamma+\sigma)(c\vee 0)}\mathbb{E}\left[e^{-p\mu e^{\gamma c}M_{\gamma}(\Omega)}\right]^{\frac{1}{p}}dc
\end{equation}
and ask if this integral is convergent. It suffices to look at the integration over the positive $c$ since the expectation part is bounded by $1$. But for all $\rho>0$,
\begin{equation}
\int_{\mathbb{R}_+}e^{\rho c}\mathbb{E}\left[e^{-p\mu e^{\gamma c}M_{\gamma}(\Omega)}\right]^{\frac{1}{p}}dc
\end{equation}
can be bounded by
\begin{equation}
\left(\int_{\mathbb{R}_+}e^{-qc}dc\right)^{\frac{1}{q}}\left(\int_{\mathbb{R}_+}e^{p(\rho+1)c}\mathbb{E}\left[e^{-p\mu e^{\gamma c}M_{\gamma}(\Omega)}\right]dc\right)^{\frac{1}{p}}
\end{equation}
by Hölder (where $q$ is the conjugate of $p$), and this bound is finite independent of $c$: it can be bounded by a fixed negative moment of the GMC measure $M_{\gamma}(\Omega)$.

Finally, one can perform the same combinatorics using the cutting annuli to conclude. We thus obtain $(\alpha,\beta)$-locally uniform exponential decay in $N$ as expected.

\bibliographystyle{alpha}
\bibliography{reference}

\end{document}